\definecolor{darkred}{rgb}{1,0,0} 
\definecolor{darkgreen}{rgb}{0,0.8,0}
\definecolor{darkblue}{rgb}{0,0,1}
\def\reflb#1#2{\begingroup
    #2%
    \def\@currentlabel{#2}%
    \phantomsection\label{#1}\endgroup
}
\newtheorem{theorem}{Theorem}[section]
\newtheorem{corollary}[theorem]{Corollary}
\newtheorem{definition}[theorem]{Definition}
\newtheorem{example}[theorem]{Example}
\newtheorem{remark}[theorem]{Remark}
\newtheorem{lemma}[theorem]{Lemma}
\newtheorem{proposition}[theorem]{Proposition}
\numberwithin{equation}{section}
\numberwithin{Theorem}{section}
\theoremstyle{definition}
\theoremstyle{remark}
\def\m{\langle\cdot,\cdot\rangle}
\def    \12    {{\frac{1}{2}}}
\begin{document}


\setlength{\smallskipamount}{6pt}
\setlength{\medskipamount}{10pt}
\setlength{\bigskipamount}{16pt}






\title[Symplectic deformations of Floer homology and non-contractible periodic orbits in twisted disc bundles]{Symplectic deformations of Floer homology and non-contractible periodic orbits in twisted disc bundles}

\author[Wenmin Gong]{Wenmin Gong}

\address{School of Mathematical Sciences, Beijing Normal University,
    Beijing, 100875, China} \email{wmgong@bnu.edu.cn}

\subjclass[2010]{53D40; 37J45; 70H12}

\keywords{Twisted cotangent bundles, Floer homology, Symplectic deformations, Non-contractible periodic orbits, Almost existence theorem}


\thanks{The work is partially supported by  NSFC 11701313, the Fundamental Research Funds for the Central Universities 2018NTST18 and by China Postdoctoral Science Foundation grant 2017M620725.}


\begin{abstract} In this paper we establish the existence of periodic orbits belonging to any $\sigma$-atoroidal free homotopy class for Hamiltonian systems in the twisted disc bundle, provided that the compactly supported time-dependent Hamiltonian function is sufficiently large over the zero section and the magnitude of the weakly exact $2$-form $\sigma$ admitting a primitive with at most linear growth on the universal cover is sufficiently small. The proof relies on showing the invariance of Floer homology under symplectic deformations and on the computation of Floer homology for the cotangent bundle endowed with its canonical symplectic form.

As a consequence, we also prove that, for any nontrivial atoroidal free homotopy class and any positive finite interval, if the magnitude of a magnetic field admitting a primitive with at most linear growth on the universal cover is sufficiently small, the twisted geodesic flow associated to the magnetic field has a periodic orbit on almost every energy level in the given interval whose projection to the underlying manifold represents the given free homotopy class. This application is carried out by showing the finiteness of the restricted Biran-Polterovich-Salamon capacity.
\end{abstract}

\maketitle


\tableofcontents


\section{Introduction}\label{sec:1}
\setcounter{equation}{0}
Let $(M,g)$ be a closed connected Riemannian manifold with cotangent bundle $\pi:T^*M\to M$. Denote by $\omega_0=-d\lambda$ the canonical symplectic form on $T^*M$, where $\lambda=pdq$ is the Liouville $1$-form in  local coordinates $(q,p)$ of $T^*M$. Let $(\tilde{M},\tilde{g})$ be the universal cover of $(M,g)$. For a closed $2$-form $\sigma\in \Omega^2(M)$ we denote by $\tilde{\sigma}$ the lift of $\sigma$ to $\tilde{M}$, and say that $\sigma$ is \emph{weakly exact} if $\tilde{\sigma}$ is exact. Let $D_RT^*M$ denote the open disc cotangent bundle of finite radius $R$ with respect to the metric $g$.  Denote by $\Omega_w^2(M)$ the set of all weakly exact $2$-forms on $M$. For each $\sigma\in \Omega_w^2(M)$ we denote
$$\omega_\sigma:=\omega_0+\pi^*\sigma$$
the \emph{twisted symplectic form}, and call the symplectic manifold $(T^*M,\omega_\sigma)$ the \emph{twisted cotangent bundle} and $(D_RT^*M,\omega_\sigma)$ the \emph{twisted disc bundle}.

For a smooth function $H:S^1\times T^*M\to \mathbb{R}$ we are interested in the existence of periodic solutions of the associated Hamiltonian system
$$\dot{x}(t)=X_{H,\sigma}(t,x(t))\quad\forall t\in S^1,$$
where the Hamiltonian vector field $X_{H,\sigma}$ on $T^*M$ is determined by $dH_t=\iota(X_{H,\sigma})\omega_{\sigma}$. The existence problem of contractible Hamiltonian periodic solutions on the twisted cotangent bundle has been studied in a number of papers; see, e.g., \cite{CGK,FS,GG0,GG1,Lu1,Lu2,Lu3,Us}. On the other hand, non-contractible periodic orbits of the Hamiltonian systems on the ordinary cotangent bundle have been previously investigated in several papers; see, e.g., \cite{BPS,GL,Ir,We0,Xu}.

The aim of the present paper is to study certain existence results for
$1$-periodic orbits in given free homotopy classes of loops for compactly supported Hamiltonians on twisted cotangent bundles. The proof relies on the machinery of Floer homology for non-contractible periodic orbits.
In the course of the last two decades, this version of Floer homology has been studied and used among others
in 2003 by Biran, Polterovich and Salamon~\cite{BPS} on $T^*M$ for
$M=\mathbb{T}^n$ or $M$ closed and
negatively curved, in 2006 by Weber~\cite{We0} on $T^*M$ for all closed Riemannian manifolds $M$, in 2013 by G\"{u}rel~\cite{Gu} on closed symplectic manifolds for simple non-contractible periodic orbits of Hamiltonian diffeomorphisms with arbitrarily large periods  and in 2016 by Ginzburg and G\"{u}rel~\cite{GG1} on closed symplectic manifolds for infinitely many non-contractible periodic orbits . For further references
concerning the existence of non-contractible orbits we refer to \cite{Ba,Ci,GG2,GL,KO,Ni,Xu}.

The main difficulty in applying this technique to detect non-contractible periodic orbits of the Hamiltonian flow of $H$ is to compute the filtered Floer homology ${\rm HF}_\alpha^{(a,b)}(H)$. It is well known that the method adopted by
Biran, Polterovich and Salamon in~\cite{BPS} is applied successfully to many other situations~\cite{CGK,GG0,GG1,Us,Xu,KO}. The basic strategy is to examine the commutative diagram

\begin{equation}\notag\label{e:ch}
\xymatrix{{\rm HF}_\alpha^{(a,b)}(H_0)
\ar[rr]^{\Psi_{H_1H_0}}\ar[dr]_{\Psi_{HH_0}}& & {\rm HF}_\alpha^{(a,b)}(H_1)\\ & {\rm HF}_\alpha^{(a,b)}(H)\ar[ur]_{\Psi_{H_1H}} & }
\end{equation}
where $H_0\leq H\leq H_1$ and $H_0$ and $H_1$ are time-independent Hamiltonians with ${\rm HF}_\alpha^{(a,b)}(H_0)$ and ${\rm HF}_\alpha^{(a,b)}(H_1)$
being not hard to compute. However, in our cases, one can not directly use the method mentioned above which is based on Pozniak's Theorem~\cite{Poz} since the sets of critical points of the symplectic action functionals of the squeezing functions $H_0$ and $H_1$, in general, are not Morse-Bott in the sense of ~\cite{BPS}. Instead, we interpret the Floer homology in the twisted cotangent bundle as a ``small perturbation'' of the Floer homology in the ordinary cotangent bundle and show the isomorphism between them.  Of course we shall have to make suitable assumptions about the twisted symplectic forms.  For a nice introduction to the theories of deformations of Floer homology on a closed symplectic manifold under symplectic perturbations we refer to the papers by Ritter~\cite{Ri}, Zhang~\cite{Zh} and Bae and Frauenfelder~\cite{BF}. It is worth to mention that the monotone homomorphisms (see Section~\ref{subsec:MH}) are preserved under the  isomorphisms which we obtain in this paper. This observation, together with the computation of Floer homology given by Weber~\cite{We0}, helps us to circumvent the difficulty of computing the Floer homology on the twisted disc bundle directly. Our results can be seen as a twisted version of Weber's results~\cite{We0} which, in particular, recover a result of Niche~\cite{Ni}.

\subsection{Main results}\label{sec:1.1}
\setcounter{equation}{0}

\begin{definition}
{ \rm
A $1$-form $\tau\in \Omega^1(\tilde{M})$ is said to be a primitive of $\sigma$ with \emph{at most linear growth} on the universal cover $\tilde{M}$ if $d\tau=\tilde{\sigma}$ and, for any $x\in \tilde{M}$, there exists a constant $\varrho_x>0$ such that for all $s\geq 0$,
}
\begin{equation}\label{e:LG}
\sup\limits_{z\in B_x(s)}
\|\tau_z\|_{\tilde{g}}\leq \varrho_x(s+1).
\end{equation}
\end{definition}
\noindent Here $B_x(s)$ denotes the geodesic ball in $\tilde{M}$ of radius $s>0$ centered at $x$. Since $M$ is compact, the above definition is independent of the choice of the metric $g$ on $M$. Put

\begin{equation}
u_{\sigma,g,x}(s)=\inf\limits_{\tau\in \mathcal{F}_\sigma}\sup\limits_{z\in B_x(s)}
\|\tau_z\|_{\tilde{g}}\quad \forall s>0,
\end{equation}
where $\mathcal{F}_\sigma=\{\tau\in\Omega^1(\tilde{M})|d\tau=\tilde{\sigma}\}$.
We call $u_{\sigma}=u_{\sigma,g,x}:\mathbb{R}_+\to\mathbb{R}_+$ the \emph{cofilling function} of $\sigma\in\Omega_w^2(M)$; see \cite{BF,Gr,Po}. If $g^\prime$ is another Riemannian metric on $M$ and $x^\prime\in\tilde{M}$ is a different base point then it is easy to check that $L^{-1}u_{\sigma,g,x}\leq u_{\sigma,g^\prime,x^\prime}\leq L u_{\sigma,g,x}$ for some constant $L>0$.

We define the set $\mathcal{P}(M)$ as
\begin{equation}\label{e:CW2F}
\mathcal{P}(M):=\{\sigma\in\Omega_w^2(M)|\hbox{ there exists a constant}\;\epsilon>0\;\hbox{such that}\; u_\sigma(s)\leq \epsilon(s+1)\}.
\end{equation}
\noindent Notice that
 each $\sigma \in \mathcal{P}(M)$, by definition, admits a primitive with at most linear growth on the universal cover $\tilde{M}$, and hence $\sigma|_{\pi_2(M)}=0$. So $\omega_\sigma$ is \emph{symplectically aspherical}, which means that the symplectic form $\omega_\sigma$ vanishes on $\pi_2(T^*M)$.

\begin{example}\label{ex:3exa}
{\rm
Let us list some manifolds on which there exists a weakly exact $2$-form admitting a primitive with at most linear growth on the universal cover; see, e.g., Gromov \cite{Gr}, Polterovich \cite{Po}, Sikorav \cite{Si} and references therein.

\begin{enumerate}
  \item If a Riemannian manifold $M$ is closed and of non-positive curvature then every closed 2-form $\sigma$ on $M$ admits a primitive with at most linear growth on $\tilde{M}$. Moreover, whenever $M$ admits a metric of negative curvature, $\tilde{\sigma}$ admits a bounded primitive, that is, there exists a $1$-form $\tau\in \Omega^1 (\tilde{M})$ such that $d\tau=\tilde{\sigma}$ and
      \begin{equation}\label{e:bp}
      \sup\limits_{x\in \tilde{M}}\|\tau_x\|<\infty.
      \end{equation}
      In particular, from (1) we have the following examples:
  \item Assume that $M$ is a closed oriented Riemannian surface with infinite fundamental group. Then the volume form on $M$ admits a primitive with at most linear growth on $\tilde{M}$.
  \item For the standard sympletic torus $(\mathbb{T}^{2n},\sum_{i=1}^{2n}dx_i\wedge dy_i)$ it holds that $\sum_{i=1}^{2n}dx_i$ $\wedge dy_i\in \mathcal{P}(\mathbb{T}^{2n})$. In fact, for $n$-dimensional tori $\mathbb{T}^n$ any closed non-exact 2-form $\sigma$ admits a primitive with at most linear growth on the universal cover $\mathbb{R}^n$ but does not satisfy~(\ref{e:bp}); see \cite{FMP0}.

\end{enumerate}
}
\end{example}

\begin{remark}\label{rem:LGC}
{\rm
When $(M,g)$ is a closed Riemannian manifold of negative curvature, the constant $\epsilon=\epsilon(\sigma)$ in $\mathcal{P}(M)$ can be chosen to converge to zero as $|\sigma|_g\to0$ Hereafter we denote $|\sigma|_g\overset{\rm def}{=}\sup_{x\in M}\|\sigma(x)\|_g$. Indeed, there exists a universal constant $\rho>0$ such that every closed $2$-form $\beta\in\Omega^2(M)$ with $|\beta|_g\leq 1$ satisfies $u_\beta(s)\leq \rho,\;\forall s\in[0,\infty)$; see Gromov~\cite[$5. {\rm B}_5$]{Gr}. Then for any closed 2-form $\sigma\in\Omega^2(M)$ the rescaling form $\hat{\sigma}=\sigma/|\sigma|_g$ satisfies $|\hat{\sigma}|_g=1$, $\hat{\sigma}\in\mathcal{P}(M)$ and $u_{\hat{\sigma}}(s)\leq \rho$. This implies that $u_{\sigma}(s)\leq \epsilon(\sigma):=\rho|\sigma|_g$ and $\epsilon(\sigma)\to 0$ as $|\sigma|_g\to 0$.
}
\end{remark}

Let $S^1=\mathbb{R}/\mathbb{Z}$, and we denote the free loop space of $M$ by $\mathfrak{L} M:=C^\infty(S^1,M)$. Given a free homotopy class $\alpha\in[S^1,M]$ we define
$$\mathfrak{L}_\alpha M:=\{\gamma\in \mathfrak{L} M\big|[\gamma]=\alpha \}.$$
We use the symbol $\Lambda_\alpha$ for the set  of lengths of all periodic geodesics in $M$ with respect to $g$ which represent $\alpha$. It is a closed and nowhere dense subset of $\mathbb{R}$ (see~\cite[Lemma 3.3]{We0}), and hence there exists a periodic geodesic $q$ such that
$$l_\alpha:=\int_{S^1}\|\dot{q}(t)\|_gdt=\inf\Lambda_\alpha.$$

\begin{definition}
{\rm Let $\sigma$ be a two-form on $M$. A class $\alpha\in [S^1,M]$ is said to be a \emph{$\sigma$-atoroidal class} if any map $f:S^1\to \mathfrak{L}_\alpha M$, thought of as a map $f:\mathbb{T}^2\to M$, satisfies $\int_{\mathbb{T}^2}f^*\sigma=0$.
}
\end{definition}
\noindent Note that if $\sigma$ is weakly exact, then the class $0$ of nullhomotopic loops is atoroidal, since both statements are equivalent to the statement that $\sigma\big|_{\pi_2(M)}=0$.
\begin{remark}\label{rem:NC}
{\rm If $M$ admits a metric of negative curvature, then every smooth map $f:\mathbb{T}^2\to M$ induces the zero map
$f^*:H^2_{dR}(M,\mathbb{R})\to H_{dR}^2(\mathbb{T}^2,\mathbb{R})$; see~\cite[Lemma~2.3]{Me} or \cite{Ni}. Therefore, for any closed $2$-form $\sigma\in \Omega^2 (M)$, every free homotopy class $\alpha\in [S^1,M]$ is a $\sigma$-atoroidal class. Another example is the $3$-dimensional torus $\mathbb{T}^3=\{(x,y,z)|x,y,z\in\mathbb{R}/\mathbb{Z}\}$ with $\sigma=dx\wedge dy$. It is easy to check that the homotopy class $\alpha=(0,0,n)$ for any integer $n$ is $\sigma$-atoroidal, see also~\cite{FMP0}.
}
\end{remark}

Throughout this paper, for the sake of brevity, we always assume that $\sigma \in \mathcal{P}(M)$ and $\alpha\in[S^1,M]$ is a $\sigma$-atoroidal class without any special statement. Given a homotopy class $\alpha$ of free loops in $M$, denote by $\mathscr{P}_{\alpha}(H,\sigma)$ the set of periodic orbits of $X_{H,\sigma}$:
$$\mathscr{P}_{\alpha}(H,\sigma)=\{x\in C^\infty(S^1,T^*M)\big|\dot{x}(t)=X_{H,\sigma}(t,x(t))\quad\forall t\in S^1,\;[\pi(x)]=\alpha \}.$$
Our first main result establishes the following existence of non-contractible periodic orbits for compactly supported Hamiltonians on twisted cotangent bundles.

\begin{theorem}\label{thm:1}
Let $(M,g)$ be a closed connected Riemannian manifold, and let $D_RT^*M$ be the open disc cotangent bundle of finite radius $R$ with respect to the metric $g$. Assume that $\sigma \in \mathcal{P}(M)$ and $\alpha\in[S^1,M]$ is any $\sigma$-atoroidal class. For every compactly supported Hamiltonian $H\in C^\infty(S^1\times D_RT^*M)$ with
$$\sup\limits_{S^1\times M}H< -Rl_{\alpha}$$
there exists $\delta_0(H,g,\sigma,\alpha)>0$ such that if $|\delta|<\delta_0(H,g,\sigma,\alpha)$ then $\mathscr{P}_{\alpha}(H,\delta\sigma)\neq \emptyset$.
\end{theorem}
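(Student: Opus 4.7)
The plan is to reduce the theorem to a non-vanishing statement for a filtered Floer homology on the twisted disc bundle and then to transport Weber's computation on the ordinary cotangent bundle across the one-parameter family of symplectic forms $\omega_{s\sigma}$, $s\in[0,\delta]$. The hypothesis $\sigma \in \mathcal{P}(M)$ combined with the $\sigma$-atoroidality of $\alpha$ guarantees that the symplectic action functional for $X_{H,\delta\sigma}$ is single-valued on the component $\mathfrak{L}_\alpha T^*M$ of the free loop space, so a meaningful filtration $\HF_\alpha^{(a,b)}(H,\delta\sigma)$ is available.

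First I would sandwich $H$ between two time-independent radial profiles $K^- \le H \le K^+$, each compactly supported in $D_R T^*M$ and depending only on $r=\|p\|_g$, with $\sup_M K^{\pm} < -R l_\alpha$; this is possible by the assumption $\sup_{S^1\times M} H<-R l_\alpha$. For such profiles the only $1$-periodic orbits of $X_{K^\pm,0}$ that project to class $\alpha$ either lie on the zero section or sit on level tori where the radial slope equals the length of some closed geodesic in $\alpha$. Choosing an action window $(a,b)$ that isolates the orbits corresponding to the shortest geodesic of class $\alpha$ (separating them from the constants on the zero section), Weber~\cite{We0} shows $\HF_\alpha^{(a,b)}(K^\pm,0)\neq 0$ and produces a non-trivial monotone continuation between them. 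The commutative diagram displayed in the introduction then forces $\HF_\alpha^{(a,b)}(H,0)\neq 0$, which handles the untwisted case $\delta=0$.

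The main step is to define, for sufficiently small $|\delta|$, natural isomorphisms
\[
\Phi^K_\delta \colon \HF_\alpha^{(a,b)}(K,0)\xrightarrow{\sim} \HF_\alpha^{(a,b)}(K,\delta\sigma), \qquad K\in\{K^-,H,K^+\},
\]
that intertwine the monotone continuation homomorphisms. These are built by interpolating the symplectic form along $\omega_{s\sigma}$, $s\in[0,\delta]$, and applying the standard homotopy-of-homotopies argument. Two analytical ingredients are essential. First, a uniform $C^0$-bound on all Floer cylinders in class $\alpha$ connecting orbits of action in $(a,b)$: lifting to the universal cover and pairing a cylinder with the primitive $\tau$ of $\tilde\sigma$, the drift of the action along the parameter $s$ is controlled by \eqref{e:LG}, and combined with the usual energy estimate this confines all Floer trajectories to a fixed compact piece of $T^*M$ independent of $s$. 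Second, a uniform $O(|\delta|)$-bound on the shift of action values of $1$-periodic orbits, so that for $|\delta|<\delta_0(H,g,\sigma,\alpha)$ no critical value crosses the endpoints of the window $(a,b)$. Composing the $\delta=0$ non-vanishing with $\Phi^H_\delta$ then yields $\HF_\alpha^{(a,b)}(H,\delta\sigma)\neq 0$, whence $\mathscr{P}_\alpha(H,\delta\sigma)\neq\emptyset$.

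The main obstacle is precisely the deformation step: obtaining $C^0$-compactness for Floer trajectories of the \emph{twisted} equation on the non-compact manifold $T^*M$, together with the quantitative control of action values along the deformation. The at-most-linear-growth primitive is used in an essential way here, since it translates into a linear bound on the twisting contribution to the action in terms of the geometric size of the Floer trajectory, which then closes on itself via the energy estimate. The upshot is an explicit threshold $\delta_0$ determined by the action gap within $(a,b)$, hence depending on $H$, $g$, $\sigma$ and $\alpha$, exactly as asserted.
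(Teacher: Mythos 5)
Your proposal follows essentially the same route as the paper: sandwich $H$ between radial profile Hamiltonians $H^{f_k}\le H\le H^{h_k}$ compactly supported in $D_RT^*M$, use Weber's computation (the paper's Proposition~\ref{prop:pf} and Remark~\ref{rem:nontriviality}) to get a non-trivial monotone continuation in the untwisted case, build the deformation isomorphisms $\Psi_{\omega_0}^{\omega_{\delta\sigma}}$ (Theorem~\ref{thm:Invariance}) using the at-most-linear-growth primitive to control energy and action drift, and transport the non-vanishing across to $\omega_{\delta\sigma}$. The only small point to be made explicit is that the paper applies the deformation isomorphism just to the two radial profiles and then detects periodic orbits of $H$ by factoring the non-trivial monotone map through $\HF^{(a,\infty)}_\alpha(H_i,\delta\sigma)$ for perturbations $H_i\to H$ satisfying admissibility and nondegeneracy, whereas you apply it to $H$ itself, which requires first arranging $a,b$ outside $\mathscr{S}_\alpha(H)$.
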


Moreover, if $M$ is a closed and negatively curved manifold we have the following.
\begin{theorem}[Niche~\cite{Ni}]\label{thm:2}
Assume that $(M,g)$ is a closed Riemannian manifold of negative curvature, and that $\alpha\in[S^1,M]$ is any free homotopy class. Denote by $\sigma$ a closed $2$-form on $M$. Let $H\in C^\infty(S^1\times D_RT^*M)$ be a compactly supported function.
Then there exist some positive constants $C=C(\alpha)$ and $\delta_0=\delta_0(H,g,\alpha)$ such that if
$$\inf\limits_{S^1\times M}H>C$$ then $\mathscr{P}_{-\alpha}(H,\sigma)\neq \emptyset$ whenever $|\sigma|_g<\delta_0$.
\end{theorem}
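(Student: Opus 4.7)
The plan is to derive Theorem~\ref{thm:2} from Theorem~\ref{thm:1} by a time-reversal trick, after converting the negative-curvature hypothesis into the quantitative data needed to apply the earlier theorem. First I would verify that the hypotheses of Theorem~\ref{thm:1} are automatic under negative curvature: by Example~\ref{ex:3exa}(1) every closed $2$-form $\sigma\in\Omega^2(M)$ lies in $\mathcal{P}(M)$, and by Remark~\ref{rem:NC} every free homotopy class $\alpha$ is $\sigma$-atoroidal because $f^{*}\colon H^{2}_{dR}(M)\to H^{2}_{dR}(\mathbb{T}^{2})$ vanishes for every smooth map $f\colon\mathbb{T}^{2}\to M$.

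Next I would eliminate the sign discrepancy between the two theorems. Set $H'(t,x):=-H(1-t,x)$, which is again compactly supported in $D_{R}T^{*}M$. A direct computation from $\iota(X_{H',\sigma})\omega_{\sigma}=dH'_{t}=-dH_{1-t}$ gives $X_{H',\sigma}(t,\cdot)=-X_{H,\sigma}(1-t,\cdot)$, so that the map $x\mapsto x(1-\cdot)$ is a bijection $\mathscr{P}_{-\alpha}(H,\sigma)\to\mathscr{P}_{\alpha}(H',\sigma)$, using that reversing the orientation of a loop negates its free homotopy class. Moreover
\begin{equation*}
\sup_{S^{1}\times M}H'=-\inf_{S^{1}\times M}H,
\end{equation*}
so the hypothesis $\inf_{S^{1}\times M}H>C$ with $C(\alpha):=Rl_{\alpha}$ becomes exactly the squeezing condition $\sup_{S^{1}\times M}H'<-Rl_{\alpha}$ of Theorem~\ref{thm:1} applied to $H'$.

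The remaining step is to upgrade the $\sigma$-dependent threshold of Theorem~\ref{thm:1} to one uniform in the direction of $\sigma$. Writing $\sigma=|\sigma|_{g}\,\hat{\sigma}$ with $|\hat{\sigma}|_{g}=1$, Remark~\ref{rem:LGC} produces a universal constant $\rho=\rho(g)$ such that $u_{\hat{\sigma}}(s)\le\rho$ for every such unit-norm $\hat{\sigma}$. This gives a uniform upper bound on the linear-growth constant $\epsilon(\hat{\sigma})$ in the definition of $\mathcal{P}(M)$, independent of the direction $\hat{\sigma}$. Provided one can show that the threshold $\delta_{0}(H',g,\hat{\sigma},\alpha)$ from Theorem~\ref{thm:1} depends on $\hat{\sigma}$ only through this cofilling constant, one obtains a single $\delta_{0}=\delta_{0}(H,g,\alpha)>0$ such that applying Theorem~\ref{thm:1} to the triple $(H',\hat{\sigma},\delta=|\sigma|_{g})$ produces an element of $\mathscr{P}_{\alpha}(H',|\sigma|_{g}\hat{\sigma})=\mathscr{P}_{\alpha}(H',\sigma)$ as soon as $|\sigma|_{g}<\delta_{0}$. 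Pulling back through the bijection of the previous paragraph then yields $\mathscr{P}_{-\alpha}(H,\sigma)\neq\emptyset$.

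The main obstacle is precisely this last uniformity. One has to inspect the proof of Theorem~\ref{thm:1} and confirm that all quantitative bounds, in particular those appearing in the symplectic-deformation isomorphism between the filtered Floer homologies of $(T^{*}M,\omega_{\delta\sigma})$ and $(T^{*}M,\omega_{0})$, and in the monotone comparison with Weber's computation for the standard cotangent bundle, depend on $\sigma$ solely through $\epsilon(\sigma)$. Once this quantitative dependence is isolated, the universal bound $\rho$ from Remark~\ref{rem:LGC} turns the Theorem~\ref{thm:1} threshold into one independent of $\hat{\sigma}$, which is exactly what is required for Theorem~\ref{thm:2}.
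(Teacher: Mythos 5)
Your reduction by time reversal is exactly what the paper does: setting $\bar H(t,x):=-H(-t,x)$ (your $H'(t,x)=-H(1-t,x)$ is the same function, since $-t\equiv 1-t$ in $S^1$), noting that $x\mapsto x(-\cdot)$ carries $\mathscr{P}_{-\alpha}(H,\sigma)$ onto $\mathscr{P}_\alpha(\bar H,\sigma)$, and that the hypothesis $\inf H>C(\alpha)=Rl_\alpha$ becomes $\sup\bar H\le -Rl_\alpha$. Your preliminary observations that under negative curvature every closed $2$-form lies in $\mathcal{P}(M)$ and every class is $\sigma$-atoroidal are also correct and needed.

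However the remaining step, which you yourself flag as "the main obstacle," is a genuine gap, and the route you sketch for it is not the right one. You propose to write $\sigma=|\sigma|_g\,\hat\sigma$, invoke Remark~\ref{rem:LGC} to bound the cofilling constant of $\hat\sigma$ uniformly, and then hope to re-apply Theorem~\ref{thm:1} with the form $\hat\sigma$ and the scalar $\delta=|\sigma|_g$, arguing that $\delta_0(H',g,\hat\sigma,\alpha)$ depends on $\hat\sigma$ only through its cofilling constant. But the threshold $\delta_0$ in Theorem~\ref{thm:1} is produced by Theorem~\ref{thm:Invariance}, whose proof feeds through Lemma~\ref{lem:NAS1}, and that lemma is a soft contradiction/compactness argument that delivers a threshold $\hat\delta$ depending on $\sigma$ itself, not merely on its cofilling constant; nothing in its statement or proof gives uniformity over the non-compact sphere $\{\hat\sigma:|\hat\sigma|_g=1\}$. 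To extract the uniform control you need, you would essentially have to re-run the compactness argument with a sequence $\delta_k\hat\sigma_k$ instead of $\delta_k\sigma$ and feed in the universal cofilling bound—which is exactly how the paper proves Lemma~\ref{lem:NAS2}. In other words, the uniformity you hope to recover from a careful re-reading of Theorem~\ref{thm:1}'s proof is already packaged as a separate statement in the paper.

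The intended and cleaner route, and the one the paper uses, is to replace Theorem~\ref{thm:Invariance} by Theorem~\ref{thm:Inv}: that theorem is precisely the negative-curvature version of the invariance of Floer homology under symplectic deformation, and its threshold $\delta_0(H,g,a,b,\alpha)$ contains no $\sigma$-dependence at all, because it is built on Lemma~\ref{lem:NAS2} together with Remark~\ref{rem:lgc}, i.e.\ the fact (coming from Gromov's bound in Remark~\ref{rem:LGC}) that the quadratic-isoperimetric constants $\epsilon_0(M,g,\sigma),\epsilon_1(M,g,\sigma,\alpha)$ tend to zero as $|\sigma|_g\to 0$. So after the time-reversal reduction one should simply re-run the proof of Theorem~\ref{thm:1} with Theorem~\ref{thm:Inv} in place of Theorem~\ref{thm:Invariance}, which converts the two diagrams~(\ref{diag:dc0}) and~(\ref{diag:dc1}) appropriately and yields a threshold depending only on $(H,g,\alpha)$. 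You did not cite Theorem~\ref{thm:Inv} or Lemma~\ref{lem:NAS2} anywhere; invoking them is what closes the gap.
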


\begin{remark}
{\rm In \cite{Ni} the author used Pozniak's theorem to compute Floer homology of the squeezing functions $K^{\pm}$ for a twisted cotangent bundle $T^*M$ of a negatively curved manifold $M$, see~\cite[Proposition~3]{Ni}. He proved that the set $\mathcal{P}_\omega(\rho)$ of $1$-periodic orbits of Hamiltonian $K^+$ with respect to the twisted symplectic form $\omega$ is homeomorphic to $S^1$, see lines 20-25, page 627 in \cite{Ni}. However, it seems very difficult for us to verify the condition that $\mathcal{P}_\omega(\rho)$ is a Morse-Bott manifold, meaning that $C_0=\{x(0):x\in \mathcal{P}_\omega(\rho)\}$ is a compact submanifold $M$ and $T_{x_0}C_0={\rm Ker}(d\phi_1(x_0)-Id)$ for every $x_0\in C_0$, where $\phi_1$ is the time one flow of $K^+$ with respect to $\omega$ (not to $\omega_0$!). This condition is the key to make use of Pozniak's theorem. For this reason, we devise a different method to show Niche's result, which is a partial motivation of our work.  
}
\end{remark}

Theorem~\ref{thm:1} (resp.Theorem~\ref{thm:2}) is a soft consequence of the invariance of Floer homology under symplectic deformations (see Theorem~\ref{thm:Invariance} (resp. Theorem~\ref{thm:Inv})).
As a result, to obtain periodic orbits one needs to bound the magnitude of the weak exact $2$-form in terms of $H$. To get rid of the dependence on $H$, we introduce a class of Hamiltonian functions compactly supported in $D_RT^*M$, and show the finiteness of a symplectic capacity defined by it. To state this result, we need to put more restrictive assumptions on $H$, and the capacity defined here is slightly different from the \emph{Biran-Polterovich-Salamon (BPS) capacity} (for the original definition see~\cite{BPS,We0}). These additional constraints on the class of Hamiltonians to define the capacity are natural because we will apply the Theorem~\ref{thm:Invariance} (resp. Theorem~\ref{thm:Inv}) to two \textit{fixed} Hamiltonians sandwiching at the same time the whole class of functions to estimate the capacity, see~Figure~\ref{fig:9}.

Let us denote $\mathscr{P}_{\alpha}(H,\sigma;\tau)$ the set of $\tau$-periodic orbits of $X_{H,\sigma}$ representing $\alpha\in[S^1,M]$ (identifying $S^1$  with $\mathbb{R}/(\tau\mathbb{Z})$). We say a periodic orbit $x\in \mathscr{P}_{\alpha}(H,\sigma;\tau)$ \emph{fast} if $0<\tau\leq 1$; otherwise, we say it \emph{slow}.
Let $W$ be an open subset of $T^*M$ containing $M$, and let $U$ be an open subset of $T^*M$ with compact closure $\bar{U}\subset W$.
Given a number $A>0$, denote by $\mathscr{H}(W,U,A)$ the class of smooth functions $H:W\to \mathbb{R}$ such that
\begin{itemize}
  \item[(H0)] $H(x)\leq 0$, for all $x\in W$;
  \item[(H1)] $H$ is compactly supported in $U$; and
  \item[(H2)] $\inf_{W}H>-A$.
\end{itemize}
Let $M\subset V\subset U$, where $V$ is an open subset of $T^*M$. For $c\in(0,A]$, denote
$$\mathscr{H}_c(W,U,V,A):=\big\{H\in\mathscr{H}(W,U,A) \big| \sup_{V}H\leq-c \big\}.
$$
Then the \emph{restricted BPS capacity $\hat{c}_{\rm BPS}$} is defined as
$$\hat{c}_{\rm BPS}(W,U,V,A;\sigma,\alpha)=\inf\big\{c>0\big|\forall H\in \mathscr{H}_c(W,U,V,A),\;\hbox{there is a fast periodic of $H$}\big\}.$$
Here we use the convention that $\inf\emptyset=\infty$.
\begin{remark}
{\rm It is easy to check that if $\hat{c}_{\rm BPS}(W,U,V,A;\sigma,\alpha)<\infty$, then every $H\in\mathscr{H}(W,U,A)$ satisfying $\sup_{V}H\leq-\hat{c}_{\rm BPS}(W,U,V,A;\sigma,\alpha)$ has at least a fast periodic orbit (with respect to the twisted symplectic form $\omega_{\sigma}=\omega_0+\pi^*\sigma$) whose projection to $M$ represents $\alpha$.
}
\end{remark}
In what follows, let us denote $U_R:=D_RT^*M$,
let $V$ be an open subset of $U_{R-\rho}$ containing $M$ with $0<\rho<R$.

\begin{theorem}\label{thm:3}
Let $(M,g)$ be a closed connected Riemannian manifold. Denote $V,A,\rho$ as above.  Assume that $\sigma \in \mathcal{P}(M)$ and $\alpha\in[S^1,M]$ is any $\sigma$-atoroidal class. If $A>Rl_\alpha$, then for any sufficiently small number $\epsilon>0$ there exists a constant $\delta_0=\delta_0(g,\sigma,\alpha, V,A,\rho,\epsilon)>0$ such that for every $\delta\in(-\delta_0,\delta_0)$ it holds that
$$\hat{c}_{\rm BPS}(U_R,U_{R-\rho},V,A;\delta\sigma,\alpha)\leq Rl_\alpha+\epsilon.$$

\end{theorem}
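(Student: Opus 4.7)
The plan is to reduce the capacity estimate to the following assertion: for every $H \in \mathscr{H}_{c}(U_R, U_{R-\rho}, V, A)$ with $c := Rl_\alpha+\epsilon$, the Hamiltonian flow of $H$ with respect to $\omega_{\delta\sigma}$ carries a $1$-periodic orbit representing $\alpha$. Since a $1$-periodic orbit has period $\tau = 1$ and is therefore fast by definition, this immediately yields $\hat{c}_{\rm BPS}(U_R,U_{R-\rho},V,A;\delta\sigma,\alpha) \leq Rl_\alpha + \epsilon$. The crucial feature to be arranged is that the argument uses a \emph{single} pair of autonomous radial squeezing Hamiltonians $K_0 \leq K_1$ on $U_R$ depending only on $(g, \sigma, \alpha, V, A, \rho, \epsilon)$ and not on the individual $H$, so that the resulting Floer-theoretic ``invariance radius'' $\delta_0$ can be prescribed uniformly in $H$.

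First I would pick $r_0 > 0$ so small that $\{\|p\|_g \leq r_0\} \subset V$, and build two radial profiles. The function $K_0$ is set equal to $-A$ on a slight shrink of $U_{R-\rho}$ and smoothly and monotonically increased to $0$ inside the collar $U_R \setminus U_{R-\rho}$. The function $K_1$ is set equal to $-c$ on $\{\|p\|_g \leq r_0 - \eta\}$ for some small $\eta > 0$, and smoothly and monotonically increased to $0$ across a thin shell contained in $V$, with its slope arranged to cross the value $l_\alpha$ exactly twice (once going up, once coming down) while avoiding every other element of $\Lambda_\alpha$. Using $H\equiv 0$ outside $U_{R-\rho}$, $H \leq 0$ on $U_R$, $H \leq -c$ on $V \supset \{\|p\|_g \leq r_0\}$, and $\inf H > -A$, a pointwise check gives $K_0 \leq H \leq K_1$ for every admissible $H$.

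Next I would compute, in the untwisted cotangent bundle $(T^*M,\omega_0)$, the filtered groups $\HF_\alpha^{(a,b)}(K_0;\omega_0)$ and $\HF_\alpha^{(a,b)}(K_1;\omega_0)$ together with the monotone homomorphism between them for an action window $(a,b)$ isolating the ``upper'' crossing of $l_\alpha$, i.e.\ the one occurring at the larger radius $r_1$ where $K_1 \approx 0$. Following Weber~\cite{We0}, the $1$-periodic orbits of $K_1$ in class $\alpha$ split into two families with well-separated actions $\approx r_0 l_\alpha + c$ (lower crossing) and $\approx r_1 l_\alpha$ (upper crossing), and $(a,b)$ is chosen to retain only the upper family; this surviving contribution is a non-trivial class, identified with a Morse--Bott piece built from the closed geodesics realising $l_\alpha$ in class $\alpha$, and the continuation map from $\HF_\alpha^{(a,b)}(K_0;\omega_0)$ onto this class is non-zero -- the hypothesis $A > Rl_\alpha$ is exactly what makes $K_0$ deep enough to ensure this surjectivity. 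Theorem~\ref{thm:Invariance} now furnishes $\delta_0 > 0$, depending only on $K_0$, $K_1$, $g$, $\sigma$, $\alpha$, such that for $|\delta|<\delta_0$ the deformation $\omega_0 \leadsto \omega_{\delta\sigma}$ induces an isomorphism on the filtered Floer groups of both $K_0$ and $K_1$ and preserves the monotone map between them. Chasing the commutative diagram of continuation maps attached to the sandwich $K_0 \leq H \leq K_1$ then forces $\HF_\alpha^{(a,b)}(H;\omega_{\delta\sigma}) \neq 0$, so that $H$ admits a $1$-periodic orbit in class $\alpha$, as required.

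The main technical obstacle is the computation in the untwisted setting: one must choose the slope profile of $K_1$ and the window $(a,b)$ so as to cleanly separate the two families of $1$-periodic orbits by action and to identify the surviving Floer homology with the Morse--Bott piece built from the shortest closed geodesics in the class $\alpha$, along the lines of Weber~\cite{We0}. All other steps -- the pointwise sandwich, the transfer of the non-vanishing statement from $\omega_0$ to $\omega_{\delta\sigma}$ via Theorem~\ref{thm:Invariance}, and the extraction of a $1$-periodic (hence fast) orbit from a non-zero filtered Floer group -- are of a more formal character and follow from standard Floer-theoretic arguments once the squeezing data and the action window are in place.
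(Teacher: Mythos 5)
Your overall strategy is the same as the paper's: sandwich every admissible $H\in\mathscr{H}_c(U_R,U_{R-\rho},V,A)$ between a \emph{fixed} pair of autonomous radial squeezing Hamiltonians depending only on the geometric data, apply Theorem~\ref{thm:Invariance} to that fixed pair to extract a uniform $\delta_0$, and then chase the commutative sandwiching diagram. You correctly identify that the uniformity of the sandwich is the whole point of the proof, and the reduction of the capacity bound to the existence of a $1$-periodic orbit for every $H\in\mathscr{H}_c$ is exactly how the paper proceeds. This is essentially the paper's argument, which uses the Sequence~A and Sequence~B profile functions $f_{k_0},h_{k_0}$ of Section~\ref{subsec:pf} as the uniform squeeze and then invokes the proof of Theorem~\ref{thm:1}.

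However, there is a genuine gap in your construction of the upper squeezing Hamiltonian $K_1$. You require $K_1$ to rise from $-c$ to $0$ inside a shell contained in $V$ "with its slope arranged to cross the value $l_\alpha$ exactly twice while avoiding every other element of $\Lambda_\alpha$." This is not achievable in general. Because the shell lies in $V\subset U_{R-\rho}$, its width is at most $R-\rho$, and since $K_1$ climbs by $c=Rl_\alpha+\epsilon$ over this shell, by the mean value theorem the slope of $K_1$ must somewhere exceed $c/(R-\rho)>l_\alpha$. The slope sweeps continuously from $0$ through this maximum, so it necessarily takes every value in $\Lambda_\alpha\cap(0,c/(R-\rho)]$, and in general $\Lambda_\alpha$ has elements there (nothing prevents the second element of $\Lambda_\alpha$ from lying arbitrarily close to $l_\alpha$). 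Consequently your two-family picture of the $1$-periodic orbits of $K_1$, and the finite window $(a,b)$ meant to isolate the "upper" $l_\alpha$-family and identify it with a clean Morse--Bott piece, breaks down: there are many families at various slopes, and their actions $r_l\cdot l$ are not well separated from $r_1 l_\alpha$ in a way that a single finite window can resolve uniformly. The paper does not try to avoid the other $\Lambda_\alpha$-crossings at all; instead, its Sequence~B profile $h_k$ rises to a large positive plateau $T_k$ and is chosen so that \emph{all} the crossings with positive action lie in the cluster near $(0,-c)$ with action $>c>a$, and the half-infinite window $(a,+\infty)$ with $a\in[Rl_\alpha,c]$, $a/R\notin\Lambda_\alpha$ is used together with the more delicate bookkeeping via $\nu_\eta,l_\pm,l_1,T_k$ to guarantee $a\notin\mathscr{S}_\alpha(H^{h_k})$. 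The resulting computation (Proposition~\ref{prop:pf}) is then carried out by deforming to convex radial profiles via Theorem~\ref{thm:crH}, not by a Morse--Bott identification localized near the shortest geodesics. A $K_1$ that stays below $0$ and is supported in $V$ does not admit this deformation-to-convex scheme, so the Floer-homology identification you sketch would need a different and nontrivial argument; as written, the computation step of your proposal does not go through.
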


\begin{theorem}\label{thm:4}
Let $(M,g)$ be a closed Riemannian manifold of negative curvature, and let $\alpha\in[S^1,M]$ be a free homotopy class. Denote $V,A,\rho$ as above, and denote by $\sigma$ any closed $2$-form on $M$. If $A>Rl_\alpha$, then for any sufficiently small number $\epsilon>0$ there exists a constant $\delta_0=\delta_0(g,\alpha, V,A,\rho,\epsilon)>0$ such that if $|\sigma|_g<\delta_0$ then we have
$$\hat{c}_{\rm BPS}(U_R,U_{R-\rho},V,A;\sigma,\alpha)\leq Rl_\alpha+\epsilon.$$
\end{theorem}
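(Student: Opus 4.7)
The strategy is to adapt the proof of Theorem~\ref{thm:3} to the setting of negative curvature, exploiting the fact that every quantitative bound can now be taken uniform in the closed $2$-form $\sigma$. Two observations from the text are crucial. By Remark~\ref{rem:NC}, every free homotopy class $\alpha\in[S^1,M]$ is automatically $\sigma$-atoroidal when $M$ is negatively curved, so the atoroidality hypothesis appearing in Theorem~\ref{thm:3} is free. Moreover, by Remark~\ref{rem:LGC} there exists a universal constant $\kappa>0$, depending only on $(M,g)$, such that every closed $2$-form $\sigma\in\Omega^2(M)$ satisfies the uniform cofilling bound $u_\sigma(s)\leq \kappa|\sigma|_g$; in particular $\sigma\in\mathcal{P}(M)$ with ``size'' $\epsilon(\sigma)\leq \kappa|\sigma|_g$, so the perturbative magnitude of $\sigma$ inside $\mathcal{P}(M)$ is controlled uniformly by $|\sigma|_g$ alone.

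First I would fix, depending only on $g,\alpha,V,A,\rho,\epsilon$ (and $R$), two time-independent radial squeezing Hamiltonians $H^-\leq H^+$ on $D_R T^*M$ that sandwich every $H\in\mathscr{H}_{Rl_\alpha+\epsilon}(U_R,U_{R-\rho},V,A)$: $H^+$ is a small nonpositive function compactly supported in $U_{R-\rho}$, while $H^-$ is a plateau near $-A$ over $V$ that rises to $0$ outside $U_{R-\rho}$, with radial slopes tuned so that, with respect to the untwisted form $\omega_0$, the only non-contractible $1$-periodic orbits of $H^\pm$ in class $\alpha$ lying in a carefully chosen action window $(a,b)$ are the explicit ``linear'' lifts of a shortest closed geodesic in $\alpha$. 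This is the setting in which Weber's calculation applies, yielding a nonzero monotone continuation homomorphism ${\rm HF}^{(a,b)}_\alpha(H^-;\omega_0)\to {\rm HF}^{(a,b)}_\alpha(H^+;\omega_0)$.

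Next, I would invoke Theorem~\ref{thm:Inv} (the symplectic deformation invariance tailored to negative curvature) applied to the fixed Hamiltonians $H^\pm$: there is a threshold $\delta_0$, depending only on $g,\alpha,V,A,\rho,\epsilon$, such that for $|\sigma|_g<\delta_0$ the natural continuation maps yield isomorphisms
\[
{\rm HF}^{(a,b)}_\alpha(H^\pm;\omega_\sigma)\ \cong\ {\rm HF}^{(a,b)}_\alpha(H^\pm;\omega_0)
\]
intertwining the monotone homomorphisms (as emphasised in Section~\ref{subsec:MH}). Combining this with the commutative triangle
\[
{\rm HF}^{(a,b)}_\alpha(H^-;\omega_\sigma)\longrightarrow {\rm HF}^{(a,b)}_\alpha(H;\omega_\sigma)\longrightarrow {\rm HF}^{(a,b)}_\alpha(H^+;\omega_\sigma)
\]
and the nonvanishing of the composite, one concludes ${\rm HF}^{(a,b)}_\alpha(H;\omega_\sigma)\neq 0$ for every $H\in\mathscr{H}_{Rl_\alpha+\epsilon}$, producing a $1$-periodic orbit of $X_{H,\sigma}$ in class $\alpha$. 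The choice of the window $(a,b)$ forces this orbit to be fast, which gives $\hat{c}_{\rm BPS}(U_R,U_{R-\rho},V,A;\sigma,\alpha)\leq Rl_\alpha+\epsilon$.

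The main obstacle will be to make the threshold $\delta_0$ genuinely independent of the particular $H$ in the class $\mathscr{H}_c$, so that it depends only on $g,\alpha,V,A,\rho,\epsilon$. This is precisely where the uniformity provided by negative curvature is indispensable: the a priori energy and compactness estimates for Floer trajectories of $(H^\pm,\omega_\sigma)$ require controlling integrals of $\tilde\sigma$ over capping cylinders in $T^*\tilde M$, which by Stokes' theorem reduce to bounding the norm of a primitive of $\tilde\sigma$ along a geodesic ball whose radius is dictated only by the $C^0$-size of $H^\pm$ and of their Hamiltonian vector fields, hence fixed. Because the universal cofilling bound $u_\sigma(s)\leq \kappa|\sigma|_g$ is uniform in $\sigma$, these estimates degrade only with $|\sigma|_g$, so a single threshold $\delta_0=\delta_0(g,\alpha,V,A,\rho,\epsilon)$ suffices and the sandwich argument goes through independently of $H$.
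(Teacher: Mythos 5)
Your overall strategy is correct and matches the paper's: sandwich the entire class $\mathscr{H}_c(U_R,U_{R-\rho},V,A)$ between two \emph{fixed} radial Hamiltonians determined only by $g,\alpha,V,A,\rho,\epsilon$, apply Theorem~\ref{thm:Inv} (and its commutativity with monotone homomorphisms) to those two fixed functions to get a threshold $\delta_0$ independent of the particular $H$, and feed the non-vanishing of Weber's monotone map through the commutative square. The uniformity in $\sigma$ via $|\sigma|_g$, via Remark~\ref{rem:LGC}, is correctly identified as what distinguishes Theorem~\ref{thm:4} from Theorem~\ref{thm:3}. One small overstatement in your last paragraph: the $H$-uniformity of $\delta_0$ already follows from the fixed sandwich (as in Theorem~\ref{thm:3}, which holds without negative curvature); negative curvature is only responsible for upgrading from ``$|\delta|<\delta_0$ for a fixed $\sigma\in\mathcal P(M)$'' to ``$|\sigma|_g<\delta_0$ for arbitrary closed $\sigma$.''

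There is, however, a genuine gap in your description of the upper squeezing Hamiltonian $H^+$. You describe it as ``a small nonpositive function compactly supported in $U_{R-\rho}$.'' Since $H^+$ must dominate every $H\in\mathscr{H}_c$, and such $H$ may vanish anywhere outside $V$, a \emph{nonpositive} $H^+$ is forced to vanish identically outside $V$; a nonpositive radial function supported in $V$ with small amplitude has radial slope much smaller than $l_\alpha$, hence no non-contractible $1$-periodic orbits in class $\alpha$ and trivial filtered Floer homology in the relevant window. The monotone map would then be zero and the argument would collapse. This is exactly why the paper's upper squeezing function $h_k$ (Sequence~B in Section~\ref{subsec:pf}) is \emph{not} nonpositive: it rises steeply from $-c$ to a plateau at height $T_k\to+\infty$ and only then drops back to $0$ at radius $R$; the steep initial slope $\tau_k\to\infty$ and the high plateau are what produce closed orbits whose actions populate the window $(a,+\infty)$ and make $\Theta_{h_k}$ in Proposition~\ref{prop:pf} non-trivial. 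You should replace your $H^+$ by this hump-shaped profile (or an equivalent one), after which your argument goes through exactly as the paper's.
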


Fix $\rho_0\in (0,R-\rho)$ and take $V=U_{\rho_0}$. By the definition of $\hat{c}_{\rm BPS}$ and a scaling argument like in the proof of Proposition~\ref{pro:nonexistence}, one can obtain 
$$\delta\cdot \hat{c}_{\rm BPS}(U_R,U_{R-\rho},U_{
	\rho_0},A;\sigma,\alpha)=\hat{c}_{\rm BPS}(U_{\delta R},U_{\delta(R-\rho)},U_{
	\delta\rho_0},\delta A;\delta\sigma,\alpha)$$
for any positive number $\delta$. Combining this with Theorem~\ref{thm:3} and Theorem~\ref{thm:4} we have the following two theorems.

\begin{theorem}\label{thm:3'}
	Let $(M,g)$ be a closed connected Riemannian manifold. Denote $A,\rho_0, \rho$ as above.  Assume that $\sigma \in \mathcal{P}(M)$ and $\alpha\in[S^1,M]$ is any $\sigma$-atoroidal class. If $A>Rl_\alpha$, then for any sufficiently small number $\epsilon>0$ there exists a constant $\delta_0=\delta_0(g,\sigma,\alpha, A,\rho,\rho_0,\epsilon)>0$ such that for any $\lambda\in (0,+\infty)$ it holds that 
	$$\hat{c}_{\rm BPS}(U_{\lambda R},U_{\lambda(R-\rho)},U_{\lambda\rho_0},\lambda A;\delta_0\lambda\sigma,\alpha)\leq\lambda(Rl_\alpha+\epsilon).$$

\end{theorem}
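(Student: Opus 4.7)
The plan is to deduce Theorem~\ref{thm:3'} directly from Theorem~\ref{thm:3} together with the conformal rescaling on $T^*M$ indicated in the paragraph preceding the statement. Theorem~\ref{thm:3} already produces, for the fixed radii $(R,R-\rho,\rho_0)$, a single constant $\delta_0$ capping the restricted BPS capacity; the fibrewise dilation $\phi_\lambda(q,p)=(q,\lambda p)$ then transports this bound to every value of the scaling parameter $\lambda>0$.

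First I would apply Theorem~\ref{thm:3} with the specific choice $V=U_{\rho_0}$ (admissible because $M\subset U_{\rho_0}\subset U_{R-\rho}$). This furnishes a constant $\delta_0=\delta_0(g,\sigma,\alpha,A,\rho,\rho_0,\epsilon)>0$ such that
$$\hat{c}_{\rm BPS}(U_R,U_{R-\rho},U_{\rho_0},A;\delta_0\sigma,\alpha)\leq Rl_\alpha+\epsilon.$$
Only a single value $\delta=\delta_0$ is needed; the two-sided neighbourhood in Theorem~\ref{thm:3} plays no role here.

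Next I would verify the scaling identity
$$\hat{c}_{\rm BPS}(U_{\lambda R},U_{\lambda(R-\rho)},U_{\lambda\rho_0},\lambda A;\lambda\eta,\alpha)=\lambda\,\hat{c}_{\rm BPS}(U_R,U_{R-\rho},U_{\rho_0},A;\eta,\alpha)$$
for every $\lambda>0$ and every weakly exact $\eta$. The mechanism is $\phi_\lambda$: it carries $U_r$ onto $U_{\lambda r}$, pulls back the Liouville form by a factor $\lambda$ and fixes $\pi^*\eta$, so $\phi_\lambda^*\omega_{\lambda\eta}=\lambda\,\omega_\eta$. Consequently the correspondence $H\longmapsto \tilde H:=\lambda H\circ\phi_\lambda^{-1}$ is a bijection between $\mathscr{H}_c(U_R,U_{R-\rho},U_{\rho_0},A)$ and $\mathscr{H}_{\lambda c}(U_{\lambda R},U_{\lambda(R-\rho)},U_{\lambda\rho_0},\lambda A)$ (the conditions (H0)--(H2) and $\sup_V H\leq -c$ all rescale by the expected factor of $\lambda$), and a short computation gives $X_{\tilde H,\lambda\eta}=(\phi_\lambda)_*X_{H,\eta}$, so $\phi_\lambda$ intertwines the two twisted Hamiltonian flows and preserves periods and projected free homotopy classes. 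Taking the infimum over admissible $c$ delivers the identity.

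Finally, substituting $\eta=\delta_0\sigma$ into the scaling identity and combining with the bound from the first step yields
$$\hat{c}_{\rm BPS}(U_{\lambda R},U_{\lambda(R-\rho)},U_{\lambda\rho_0},\lambda A;\delta_0\lambda\sigma,\alpha)=\lambda\,\hat{c}_{\rm BPS}(U_R,U_{R-\rho},U_{\rho_0},A;\delta_0\sigma,\alpha)\leq \lambda(Rl_\alpha+\epsilon),$$
which is exactly Theorem~\ref{thm:3'}. There is no genuine obstacle here; the only point requiring some care is to confirm that $\phi_\lambda$ really does transport the defining constraints of $\mathscr{H}_c$ correctly, so that the fast periodic orbits on the two sides are in honest bijection, and this is a direct calculation from the explicit formula $\phi_\lambda(q,p)=(q,\lambda p)$ together with $\phi_\lambda^*\omega_{\lambda\eta}=\lambda\,\omega_\eta$.
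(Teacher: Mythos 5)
Your proposal is correct and follows essentially the same route the paper indicates, namely applying Theorem~\ref{thm:3} with $V=U_{\rho_0}$ and then using the fibrewise dilation $\phi_\lambda(q,p)=(q,\lambda p)$ to establish the scaling identity $\hat{c}_{\rm BPS}(U_{\lambda R},U_{\lambda(R-\rho)},U_{\lambda\rho_0},\lambda A;\lambda\eta,\alpha)=\lambda\,\hat{c}_{\rm BPS}(U_R,U_{R-\rho},U_{\rho_0},A;\eta,\alpha)$, which the paper attributes to "a scaling argument like in the proof of Proposition~\ref{pro:nonexistence}." Your computations $\phi_\lambda^*\omega_{\lambda\eta}=\lambda\,\omega_\eta$ and $(\phi_\lambda)_*X_{H,\eta}=X_{\lambda H\circ\phi_\lambda^{-1},\,\lambda\eta}$ supply exactly the details the paper leaves implicit.
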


\begin{theorem}\label{thm:4'}
	Let $(M,g)$ be a closed Riemannian manifold of negative curvature, and let $\alpha\in[S^1,M]$ be a free homotopy class. Denote $A,\rho,\rho_0$ as above, and denote by $\sigma$ any closed $2$-form on $M$. If $A>Rl_\alpha$, then for any sufficiently small number $\epsilon>0$ there exists a constant $\delta_0=\delta_0(g,\alpha, A,\rho,\rho_0,\epsilon)>0$ such that 
$$\hat{c}_{\rm BPS}(U_{\delta_0R|\sigma|_g},U_{\delta_0(R-\rho)|\sigma|_g},U_{\delta_0\rho_0|\sigma|_g},\delta_0A|\sigma|_g;\sigma,\alpha)\leq\delta_0(Rl_\alpha+\epsilon)|\sigma|_g.$$
\end{theorem}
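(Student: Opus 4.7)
The plan is to deduce Theorem~\ref{thm:4'} as a formal corollary of Theorem~\ref{thm:4} through the rescaling identity
$$\lambda\,\hat{c}_{\rm BPS}(U_R,U_{R-\rho},U_{\rho_0},A;\beta,\alpha)=\hat{c}_{\rm BPS}(U_{\lambda R},U_{\lambda(R-\rho)},U_{\lambda\rho_0},\lambda A;\lambda\beta,\alpha),\qquad(\ast)$$
valid for every $\lambda>0$ and every closed two-form $\beta\in\Omega^2(M)$; this is precisely the scaling argument referred to immediately before the statement of the theorem. First I would establish $(\ast)$ by pulling back under the fibrewise dilation $\psi_\lambda(q,p)=(q,\lambda p)$: since $\psi_\lambda^*(p\,dq)=\lambda\,p\,dq$ and $\pi\circ\psi_\lambda=\pi$, one has $\psi_\lambda^*\omega_\beta=\lambda\,\omega_{\beta/\lambda}$. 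The assignment $\tilde H\mapsto \lambda\,\tilde H\circ\psi_\lambda^{-1}$ then furnishes a bijection $\mathscr{H}_c(U_R,U_{R-\rho},U_{\rho_0},A)\to\mathscr{H}_{\lambda c}(U_{\lambda R},U_{\lambda(R-\rho)},U_{\lambda\rho_0},\lambda A)$ that intertwines (via $\psi_\lambda$) the Hamiltonian flow for $\omega_{\beta/\lambda}$ with that for $\omega_\beta$, preserving periods (since $\psi_\lambda$ does not rescale time) and free homotopy classes of projected loops.

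Granting $(\ast)$, I would fix $\epsilon>0$ and apply Theorem~\ref{thm:4} with $V=U_{\rho_0}$ to extract a constant $\delta_1=\delta_1(g,\alpha,A,\rho,\rho_0,\epsilon)>0$ such that every closed two-form $\tau$ on $M$ with $|\tau|_g<\delta_1$ satisfies
$$\hat{c}_{\rm BPS}(U_R,U_{R-\rho},U_{\rho_0},A;\tau,\alpha)\leq Rl_\alpha+\epsilon.$$
Then set $\delta_0:=2/\delta_1$, which depends only on the prescribed data $g,\alpha,A,\rho,\rho_0,\epsilon$. Given any nonzero closed two-form $\sigma\in\Omega^2(M)$, put $\lambda:=\delta_0|\sigma|_g$ and $\tau:=\sigma/\lambda$; one has $|\tau|_g=1/\delta_0=\delta_1/2<\delta_1$, so the displayed bound applies to $\tau$. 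Inserting this into $(\ast)$ with $\beta=\tau$ (whence $\lambda\beta=\sigma$) yields exactly
$$\hat{c}_{\rm BPS}(U_{\delta_0 R|\sigma|_g},U_{\delta_0(R-\rho)|\sigma|_g},U_{\delta_0\rho_0|\sigma|_g},\delta_0 A|\sigma|_g;\sigma,\alpha)\leq\delta_0(Rl_\alpha+\epsilon)|\sigma|_g,$$
which is the claimed inequality. The case $\sigma\equiv 0$ is handled separately and trivially, since the scaled bound $\delta_0 A|\sigma|_g$ collapses to $0$ and the class $\mathscr{H}_c$ becomes vacuous.

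The principal obstacle I anticipate is not the deduction itself but the careful verification of $(\ast)$: one must check that conditions (H0)--(H2) transform correctly under $\tilde H\mapsto\lambda\tilde H\circ\psi_\lambda^{-1}$, that the ``fast'' threshold $0<\tau\leq 1$ on the period is preserved (immediate, since $\psi_\lambda$ acts trivially on the time variable), and that projected loops keep their free homotopy class under $\psi_\lambda$ (clear, since $\psi_\lambda$ descends to the identity on $M$). Once Theorem~\ref{thm:4} is in hand, these are routine bookkeeping items, and the same template proves Theorem~\ref{thm:3'} from Theorem~\ref{thm:3}, with the weak exactness of $\sigma$ replacing the negative curvature hypothesis.
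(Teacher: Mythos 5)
Your proof is correct and follows exactly the route the paper indicates: the rescaling identity $(\ast)$ via the fibrewise dilation $\psi_\lambda$, combined with Theorem~\ref{thm:4} applied to $V=U_{\rho_0}$, gives the claim with $\delta_0=2/\delta_1$ and $\lambda=\delta_0|\sigma|_g$. The paper does not spell out these details, but your verification of how (H0)--(H2), the period condition, and the projected homotopy class behave under $\tilde H\mapsto\lambda\tilde H\circ\psi_\lambda^{-1}$ is precisely the missing bookkeeping and is carried out correctly.
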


\begin{remark}
{\rm
In the definition of the restricted BPS capacity, one can also allow $H$ to be time-dependent. In this case, we require that $H_t,t\in[0,1]$ to be periodic in time, conditions (H0)-(H2) hold uniformly for $t\in S^1$, and that $\sup_{S^1\times V}H\leq -c$ in the definition of $\mathscr{H}_c(W,U,V,A)$. Then one can show that Theorem~\ref{thm:3} -- Theorem~\ref{thm:4'} still hold.
}
\end{remark}

Like the Hofer-Zehnder capacity, the finiteness of restricted BPS capacity implies almost existence of periodic orbits.
Consider a proper\footnotemark[1]\footnotetext[1]{A map is called proper if preimages of compact sets are compact.} smooth function
$$H:T^*M\to \mathbb{R}$$
which is bounded from below. Then $\{H\leq s\}$ is compact, and hence $\{H<s\}$ is contained in $D_rT^*M$ for some sufficiently large radius $r=r(s)$. Suppose that $M\subset \bar{V}\subset\{H< d\}$, where $V$ is an open neighborhood of $M$ in $T^*M$. Given $\rho>0$ and $T>d$, let $A>(r(T+\rho)+\rho)l_\alpha$. Then
the finiteness of $\hat{c}_{\rm BPS}(U_{r(T+\rho)+\rho},U_{r(T+\rho)},V,A;\delta\sigma,\alpha)$ implies the following.

\begin{theorem}[Almost existence theorem]\label{thm:AET1}
Let $(M,g)$ be a closed connected Riemannian manifold. Assume $\sigma \in \mathcal{P}(M)$ and $\alpha\in[S^1,M]$ is a non-trivial $\sigma$-atoroidal class. Let $H:T^*M\to \mathbb{R}$ be a proper smooth function bounded from below, and denote $d,V,A,T,\rho$ as above. Then there exists a constant $\delta_0=\delta_0(g,\sigma,\alpha, V, T, A,$ $\rho)>0$ such that
if $\delta\in(-\delta_0,\delta_0)$ then for almost all $s\in[d,T]$ the level set $\{H=s\}$ carries a periodic Hamiltonian orbit with respect to $\omega_{\delta\sigma}=\omega_0+\delta\pi^*\sigma$ whose projection to $M$ represents $\alpha$.

\end{theorem}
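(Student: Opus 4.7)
My plan is to derive this from Theorem~\ref{thm:3} by a Hofer--Zehnder style reparametrization argument, adapted to the nontrivial free homotopy class $\alpha$. First I set $R:=r(T+\rho)+\rho$, so that $V\subset\{H<d\}\subset U_{R-\rho}$ and $\{H\le T\}\subset U_{R-\rho}$. Since $A>Rl_\alpha$ by hypothesis, Theorem~\ref{thm:3} yields, for any small $\eta>0$, a constant $\delta_0>0$ such that
\[
\hat c_{\rm BPS}(U_R,U_{R-\rho},V,A;\delta\sigma,\alpha)\le Rl_\alpha+\eta
\]
for every $\delta\in(-\delta_0,\delta_0)$; in particular this capacity is finite. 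Fix such a $\delta$ and suppose, for contradiction, that
\[
Z:=\{s\in[d,T]:\{H=s\}\text{ carries no periodic orbit of }X_{H,\delta\sigma}\text{ representing }\alpha\}
\]
has positive Lebesgue measure.

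Next, for each $N\in\mathbb{N}$ let $M_N\subset[d,T]$ denote the set of levels $s$ carrying a periodic orbit of $X_{H,\delta\sigma}$ of period at most $N$ representing $\alpha$. I plan to show that $M_N$ is closed. If $s_n\to s$ and $x_n$ is such an orbit on $\{H=s_n\}$ of period $\tau_n\le N$, then properness of $H$ confines the $x_n$ to a compact set of $T^*M$, on which $X_{H,\delta\sigma}$ is bounded in norm by some constant $C_0$. Since the loop $\pi\circ x_n$ represents $\alpha$, its $g$-length is at least $l_\alpha$, giving $l_\alpha\le C_0\tau_n$ and thus $\tau_n\ge l_\alpha/C_0>0$. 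Arzel\`a--Ascoli then produces a subsequential $C^0$-limit $x$, a nonconstant periodic orbit on $\{H=s\}$ still in the class $\alpha$, proving closedness. Hence $Z_N:=[d,T]\setminus M_N$ is open, $Z=\bigcap_N Z_N$, and $|Z_N|\ge|Z|>0$ for all $N$.

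I then fix $\epsilon'>0$ with $A-\epsilon'>Rl_\alpha+\eta$, choose $N$ with $N\ge(A-\epsilon')/|Z|$, and build a smooth nondecreasing function $f:\mathbb{R}\to(-\infty,0]$ that equals $-A+\epsilon'$ on $(-\infty,d]$, equals $0$ on $[T,\infty)$, has $\operatorname{supp}(f')$ compactly contained in $Z_N\cap(d,T)$, $\sup f'\le N$, and $\int f'\,du=A-\epsilon'$; this is possible because $|Z_N|\ge(A-\epsilon')/N$. The Hamiltonian $K:=f\circ H$ is then compactly supported in $U_{R-\rho}$, nonpositive, and satisfies $\inf K\ge-A+\epsilon'>-A$ and $\sup_V K\le-A+\epsilon'$, so $K\in\mathscr H_{A-\epsilon'}(U_R,U_{R-\rho},V,A)$. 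Since $A-\epsilon'>\hat c_{\rm BPS}$, finiteness of the capacity produces a fast periodic orbit $x$ of $X_{K,\delta\sigma}$ of period $\tau\in(0,1]$ representing $\alpha$. Nontriviality of $\alpha$ rules out constant orbits, so $x$ must lie on a single level $\{H=s_0\}$ with $f'(s_0)>0$, and the reparametrization $y(t):=x(t/f'(s_0))$ is a periodic orbit of $X_{H,\delta\sigma}$ on $\{H=s_0\}$ in class $\alpha$ with period $f'(s_0)\tau\le N$. Thus $s_0\in M_N$, whereas $s_0\in\operatorname{supp}(f')\subset Z_N$, contradicting $M_N\cap Z_N=\emptyset$. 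So $|Z|=0$, as required.

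The main delicate step will be the closedness of $M_N$: without a uniform positive lower bound on the periods $\tau_n$, a limit orbit could degenerate to a constant and lose its homotopy class. This is precisely where the nontriviality of $\alpha$ is essentially used, and the length--speed inequality above handles it. Once this is in place, the rest is a standard Hofer--Zehnder reparametrization combined with the finiteness of the restricted BPS capacity supplied by Theorem~\ref{thm:3}.
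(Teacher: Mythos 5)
Your proposal is correct in substance, but it follows a genuinely different route from the paper's proof, and the comparison is instructive. The paper's Step~1 introduces the monotone function $\hat c_\delta(s)=\hat c_{\rm BPS}(U_{r(T+\rho)+\rho},\{H<s\},V,A;\delta\sigma,\alpha)$, which is bounded by Theorem~\ref{thm:3}, invokes Lebesgue's theorem to get a.e.\ differentiability, and at a regular value $s_0$ where $\hat c_\delta$ is Lipschitz runs the familiar Hofer--Zehnder construction (cut off a near-optimal $G$, extend by a profile $f$ with $f'\le 8L$, produce fast orbits on nearby levels $S_{\varepsilon_j}$, pass to the limit). You instead argue directly that the bad set $Z$ has measure zero: you show the sets $M_N$ of levels carrying an $\alpha$-orbit of period $\le N$ are closed, so $Z_N:=[d,T]\setminus M_N$ is open with $|Z_N|\ge|Z|$, and then if $|Z|>0$ you build a single auxiliary Hamiltonian $K=f\circ H$ with $\operatorname{supp}(f')\Subset Z_N$, $\sup f'\le N$, and $\sup_V K\le -(A-\epsilon')$, whose fast orbit (guaranteed by the capacity bound from Theorem~\ref{thm:3}) reparametrizes to an $\alpha$-orbit of period $\le N$ on some level in $Z_N$ --- a contradiction. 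The conceptual replacement for the paper's Lipschitz bound $f_j'\le 8L$ is your a priori lower bound $\tau_n\ge l_\alpha/C_0>0$ on periods of $\alpha$-orbits in a compact region, which is what makes $M_N$ closed; both proofs use nontriviality of $\alpha$ for essentially the same purpose (preventing degeneration of closed orbits to points), but you use it at the outset rather than in the final limit. Your route avoids Lebesgue differentiability and the sequence $\varsigma_j\to0$, at the cost of the (equally standard) measure-theoretic bookkeeping with $Z_N$.

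Two small points worth tightening. First, take $N>(A-\epsilon')/|Z|$ rather than $\ge$: since $f'$ is smooth and hence nonzero only on an open subset strictly inside $Z_N\cap(d,T)$, you need $(A-\epsilon')/N$ to be \emph{strictly} less than $|Z_N\cap(d,T)|$ to find a compact $K\Subset Z_N\cap(d,T)$ with $|K|>(A-\epsilon')/N$ and fit a smooth bump of integral $A-\epsilon'$ and sup $<N$ on it. Second, when you appeal to Arzel\`a--Ascoli for closedness of $M_N$, you should reparametrize to a fixed period (e.g.\ $z_n(t)=x_n(\tau_nt)$, extract $\tau_n\to\tau_0\in[l_\alpha/C_0,N]$, then pass to the limit in the ODE and note that $C^0$-closeness of loops preserves the free homotopy class); as written the sentence elides these reparametrizations. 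Neither point affects the validity of the overall argument.
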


 Similarly, for manifolds of negative curvature the finiteness of  $\hat{c}_{\rm BPS}(U_{r(T+\rho)+\rho},$ $U_{r(T+\rho)},V,A;\sigma,\alpha)$ implies the following.

\begin{theorem}[Almost existence theorem]\label{thm:AET2}
Let $(M,g)$ be a closed Riemannian manifold of negative curvature, and let $\alpha\in[S^1,M]$ be any non-trivial free homotopy class. Let $H:T^*M\to \mathbb{R}$ be a proper smooth function bounded from below. Denote by $\sigma$ any closed $2$-form on $M$, and denote $d,V,A,T,\rho$ as above. Then there exists a constant $\delta_0=\delta_0(g,\alpha, V, T, A,\rho)>0$ such that
if $|\sigma|<\delta_0$ then for almost every $s\in[d,T]$ the level set $\{H=s\}$ carries a periodic Hamiltonian orbit with respect to $\omega_{\sigma}=\omega_0+\pi^*\sigma$ whose projection to $M$ represents $\alpha$.

\end{theorem}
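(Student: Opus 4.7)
The plan is to combine the finiteness of the restricted BPS capacity supplied by Theorem~\ref{thm:4} with the standard Hofer--Zehnder almost existence scheme, carried out in the non-contractible twisted setting. Since $H$ is proper and bounded below, $\{H\leq T+\rho\}$ is compact and hence contained in some $U_{r_0}$ with $r_0:=r(T+\rho)$. I would take $U:=U_{r_0}$ and $W:=U_{r_0+\rho}$, so that $V\subset\{H<d\}\subset U\subset W$. The hypothesis $A>(r_0+\rho)l_\alpha$ puts us in the regime of Theorem~\ref{thm:4}, which supplies $\delta_0>0$ such that, whenever $|\sigma|_g<\delta_0$, the capacity $\hat{c}:=\hat{c}_{\rm BPS}(W,U,V,A;\sigma,\alpha)$ is finite; by choosing the auxiliary $\epsilon$ in Theorem~\ref{thm:4} small we may also arrange $\hat c<A$.

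Let $\Sigma\subset[d,T]$ denote the set of $s$ such that the level $\{H=s\}$ carries no periodic $X_{H,\sigma}$-orbit whose projection represents $\alpha$. By Sard's theorem the critical values of $H$ form a null set, so I may assume $\Sigma$ consists of regular values. Suppose for contradiction $|\Sigma|>0$ and pick a Lebesgue density point $a\in\Sigma\cap(d,T)$. For fixed $c\in(\hat c,A)$ and small $\epsilon>0$, I would construct a smooth cutoff $f_\epsilon:\mathbb{R}\to\mathbb{R}$ equal to $-c$ on $(-\infty,a-\epsilon]$, equal to $0$ on $[a+\epsilon,\infty)$, and monotone in between. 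For $\epsilon$ small enough, $F_\epsilon:=f_\epsilon\circ H$ belongs to $\mathscr{H}_c(W,U,V,A)$: it is non-positive, supported in $\{H<a+\epsilon\}\subset U$, bounded below by $-c>-A$, and satisfies $\sup_V F_\epsilon=-c$ because $H(V)\subset(-\infty,d]\subset(-\infty,a-\epsilon]$. By definition of $\hat c$ and the choice $c>\hat c$, the function $F_\epsilon$ must then admit a fast $1$-periodic orbit $x_\epsilon$ representing $\alpha$.

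Because $X_{F_\epsilon,\sigma}=f_\epsilon'(H)X_{H,\sigma}$ and $f_\epsilon'$ vanishes outside $(a-\epsilon,a+\epsilon)$, the orbit $x_\epsilon$ is confined to a regular level $\{H=s_\epsilon\}$ with $s_\epsilon\in(a-\epsilon,a+\epsilon)$, and after a constant-speed reparametrization becomes a closed $X_{H,\sigma}$-orbit at level $s_\epsilon$ whose projection still represents $\alpha$. Consequently $s_\epsilon\notin\Sigma$, and letting $\epsilon\to0$ produces closed orbits representing $\alpha$ on a sequence of levels accumulating at $a$.

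Upgrading this density-type statement to the full Lebesgue measure-zero conclusion for $\Sigma$ is the main obstacle, and I would handle it along classical Hofer--Zehnder lines by considering the monotone bounded function $\phi(t):=\hat{c}_{\rm BPS}(W_t,\{H<t\},V,A;\sigma,\alpha)$, where $W_t$ is a slight enlargement of $\{H<t\}$. Lebesgue's differentiation theorem yields differentiability of $\phi$ at almost every $t\in[d,T]$, and at such a point $t_0$ with $\phi'(t_0)<\infty$ a refinement of the construction above --- collapsing the interpolation interval down to $\{t_0\}$ while using the bound on $\phi'(t_0)$ to keep the periods of the $X_{H,\sigma}$-reparametrizations of $x_\epsilon$ uniformly bounded --- extracts via Arzel\`a--Ascoli a genuine closed $X_{H,\sigma}$-orbit on $\{H=t_0\}$ whose projection represents $\alpha$. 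This final compactness and extraction step, isolating an orbit on the critical level from the approximating family in the twisted non-contractible framework, is the only serious technical point I would need to verify carefully.
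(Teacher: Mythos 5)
Your high‑level strategy matches the paper's: the paper reduces Theorem~\ref{thm:AET2} to the proof of Theorem~\ref{thm:AET1}, which is a Hofer--Zehnder almost existence argument run with the monotone function $s\mapsto\hat{c}_{\rm BPS}(U_{r(T+\rho)+\rho},\{H<s\},V,A;\sigma,\alpha)$, Lebesgue's differentiation theorem, and Arzel\`a--Ascoli, with the finiteness input coming from Theorem~\ref{thm:4} (and the monotonicity of $\hat{c}_{\rm BPS}$ from Proposition~\ref{prop:monotonicity}). However, the proposal stops short of the step that actually carries the whole proof, and the preliminary ``density point'' argument as written does not help: picking a Lebesgue density point $a\in\Sigma$ and then producing \emph{one} level $s_\epsilon\notin\Sigma$ in each $(a-\epsilon,a+\epsilon)$ is not in tension with $a$ being a density point, so no contradiction is reached and that paragraph can be discarded.

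The genuine gap is in the piece you flag as ``the only serious technical point to verify carefully''; this is not a minor refinement but the content of the proof, and the restricted‑BPS capacity introduces constraints that a naive transplant of the Hofer--Zehnder scheme does not respect. Concretely, at a Lipschitz point $s_0$ with constant $L$, the test Hamiltonian $K$ must lie in $\mathscr{H}_{c(\varsigma)}(W,U,V,A)$, in particular $\inf K>-A$. The $G$ supplied by the capacity definition is only known to satisfy $\inf G>-A$, which is too weak once one shifts by $-2L\varsigma$ and glues in a collar; the paper therefore first \emph{cuts off} $G$ to a function $\widetilde G$ with $\inf_{B_0}\widetilde G\ge -c(0)$ using a $1$‑Lipschitz cutoff $\chi$, so that no new fast orbits are created (because $|\chi'|\le1$) and the final $K=\widetilde G-2L\varsigma$ on $B_0$ extended by a collar $f(\varepsilon)$ satisfies $\inf K>-c(0)-2L\eta>-A$ and $\sup_V K\le -c(\varsigma)$. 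Without this cut‑off step the admissibility of $K$ fails and the capacity gives nothing. One must then argue that the fast $X_{K,\sigma}$‑orbit forced by $c(\varsigma)$ cannot enter $B_0$ (as $\widetilde G$ has no fast orbits there) and hence sits on some $S_\varepsilon$ with $0<\varepsilon<\varsigma$; repeating for $\varsigma_j\to0$, using $f'_j\le8L$ to bound $f'_j(\varepsilon_j)\tau_j$ uniformly, one extracts a $C^\infty$‑limit on $S_0$ by Arzel\`a--Ascoli; and finally the limit period $\tau$ is nonzero precisely because $\alpha\neq0$ forbids the projected loops from collapsing to a constant. None of these steps is carried out in the proposal, and the cut‑off argument in particular is a nontrivial adaptation specific to the restricted BPS framework, so the proof as written is incomplete.
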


\subsection{Applications}\label{sec:1.2}
\setcounter{equation}{0}
Recall that the Hamiltonian function $H_g(q,p)=\|p\|^2_g/2$ is said to be the \emph{standard kinetic energy} on $T^*M$, the Hamiltonian flow of $X_{H_g,\sigma}$ on $T^*M$, called a \emph{twisted geodesic flow}, describes the motion of a charge on $M$ in the \emph{magnetic field} $\sigma$. For related results on the existence of periodic orbits of twisted geodesic flows, we refer to the papers by Ginzburg and G\"{u}rel~\cite{GG0,GG1} using the Floer theory for contractible periodic orbits, by Lu~\cite{Lu3} applying pseudo symplectic capacities for magnetic fields given by symplectic forms and by Frauenfelder and Schlenk~\cite{FS} using the spectral metric when the two-form $\sigma$ is exact. Recently, Asselle and Benedetti~\cite{AB} showed that for almost all energy levels above the maximum critical value of an autonomous Tonelli Hamiltonian  there exists a periodic magnetic geodesic based on variational methods. By the same method, Abbondandolo,  Macarini,  Mazzucchelli and  Paternain~\cite{AMMP} proved the existence of infinitely many periodic orbits of exact magnetic flows on surfaces for almost every subcritical energy level. It is worth to mention that Ginzburg~\cite{Gi1} also gave a counterexample and showed the nonexistence of closed trajectories of $H_g$; see~\cite{Gi0} for a more comprehensive survey. Numerous results about periodic orbits of the Hamitonian flow in twisted cotangent bundles can be found in \cite{FMP0,FMP1,GK,Lu1,Lu2,Us,Xu}. For $H_g$ the function $r(s)$ can be taken as $\sqrt{2s}$. So by using Theorem~\ref{thm:AET1} we obtain the following.

\begin{corollary}\label{cor:AET1}
Let $(M,g)$ be a closed connected Riemannian manifold. Assume $\sigma \in \mathcal{P}(M)$ and $\alpha\in[S^1,M]$ is a non-trivial $\sigma$-atoroidal class. Then for $0<d<T$,
there exists a positive constant $\delta_0=\delta_0(g,\sigma,\alpha,d,T)$ such that if $\delta\in(-\delta_0,\delta_0)$, then for almost all $s\in[d,T]$ the energy level $\{H_g=s\}$ carries a periodic orbit of $X_{H_g,\delta\sigma}$ whose projection to $M$ represents $\alpha$.
\end{corollary}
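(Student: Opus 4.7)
The plan is to deduce Corollary~\ref{cor:AET1} as a direct application of Theorem~\ref{thm:AET1} to the standard kinetic energy Hamiltonian $H = H_g$, where $H_g(q,p) = \tfrac12\|p\|_g^2$. The entire argument consists of verifying the hypotheses of Theorem~\ref{thm:AET1} and choosing the auxiliary parameters $V, A, \rho$ so that the constant $\delta_0$ produced by that theorem collapses to a dependence on $(g, \sigma, \alpha, d, T)$ only, as required by the corollary.

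First I would check that $H_g$ satisfies all hypotheses of Theorem~\ref{thm:AET1}: it is smooth, bounded below by $0$, and proper, because its sublevel sets $\{H_g \leq s\}$ coincide with the closed disc bundles $\bar D_{\sqrt{2s}}T^*M$, which are compact since $M$ is closed. In particular, as the text immediately preceding the corollary observes, one may take the radius function to be $r(s) = \sqrt{2s}$, so that $\{H_g < s\} \subset D_{r(s)} T^*M$.

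Next, for fixed $0 < d < T$, I would choose the auxiliary data of Theorem~\ref{thm:AET1} as follows. Set $\rho := 1$. Take $V := \{H_g < d/2\}$, which is an open neighborhood of $M$ (since $H_g|_M \equiv 0 < d/2$) and satisfies $\bar V = \{H_g \leq d/2\} \subset \{H_g < d\}$. Then pick any $A > (r(T+\rho) + \rho)\,l_\alpha = (\sqrt{2(T+1)} + 1)\,l_\alpha$, for definiteness $A := 2(\sqrt{2(T+1)} + 1)\,l_\alpha + 1$. With these choices, all hypotheses of Theorem~\ref{thm:AET1} hold, and the theorem delivers a constant $\delta_0 = \delta_0(g, \sigma, \alpha, V, T, A, \rho) > 0$. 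Since $V, A, \rho$ now depend only on $d, T, g, \alpha$, one may simply rename this as $\delta_0 = \delta_0(g, \sigma, \alpha, d, T)$.

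Applying the conclusion of Theorem~\ref{thm:AET1} verbatim, for every $\delta \in (-\delta_0, \delta_0)$ and for almost every $s \in [d, T]$, the level set $\{H_g = s\}$ carries a periodic orbit of the twisted geodesic flow $X_{H_g, \delta\sigma}$ whose projection to $M$ represents $\alpha$; this is exactly Corollary~\ref{cor:AET1}. Since the deduction is a specialization of a theorem already proved, there is no substantive obstacle: the only care required is to arrange the parameter choices so that the dependence of $\delta_0$ on the auxiliary data $V, A, \rho$ is absorbed into the dependence on $d$ and $T$, which the choices above achieve.
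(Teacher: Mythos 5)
Your proposal is correct and matches the paper's own (one-line) argument: the paper simply observes that for $H_g$ one may take $r(s)=\sqrt{2s}$ and then cites Theorem~\ref{thm:AET1}. You have merely spelled out one admissible choice of the auxiliary data $V,A,\rho$ so that the resulting $\delta_0$ depends only on $g,\sigma,\alpha,d,T$, which is exactly the routine bookkeeping the paper leaves implicit.
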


Similarly, Theorem~\ref{thm:AET2} implies the following.
\begin{corollary}\label{cor:AET2}
Let $(M,g)$ be a closed Riemannian manifold of negative curvature, and let $\alpha\in[S^1,M]$ be any non-trivial free homotopy class. Denote by $\sigma$ any closed $2$-form on $M$. Then for $0<d<T$, there exists a positive constant $\delta_0=\delta_0(g,\alpha,d,T)$ such that if $|\sigma|_g<\delta_0$, then for almost every $s\in[d,T]$ the energy level $\{H_g=s\}$ carries a periodic orbit of $X_{H_g,\sigma}$ whose projection to $M$ represents $\alpha$.
\end{corollary}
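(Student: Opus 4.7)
The plan is to specialize Theorem~\ref{thm:AET2} to the standard kinetic energy $H=H_g$ and verify that the auxiliary parameters required there can be chosen to depend only on $(g,\alpha,d,T)$. First I would check the structural hypotheses on $H_g$: it is smooth, non-negative (hence bounded below), and proper because its sublevel set $\{H_g\le s\}$ equals the closed disc bundle of radius $\sqrt{2s}$, which is compact. This simultaneously pins down the function $r(s)$ from the paragraph preceding Theorem~\ref{thm:AET1} as $r(s)=\sqrt{2s}$.

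Next I would fix the ancillary data $V,A,\rho$ appearing in Theorem~\ref{thm:AET2}. Take, for instance, $\rho=1$ and the open disc bundle $V=D_{\sqrt{d}}T^*M$; the latter contains the zero section $M$, and its closure $\overline{V}=\{H_g\le d/2\}$ lies in $\{H_g<d\}$. Then set
\[
A:=\bigl(\sqrt{2(T+1)}+1\bigr)\,l_\alpha+1,
\]
so that $A>(r(T+\rho)+\rho)\,l_\alpha$, as demanded in the paragraph preceding Theorem~\ref{thm:AET1}. Because $(M,g)$ is negatively curved and $\alpha$ is non-trivial, $l_\alpha$ is positive and finite, so $A$ is a well-defined positive number depending only on $(g,\alpha,T)$. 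Altogether the triple $(V,A,\rho)$ depends only on $(g,\alpha,d,T)$.

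Feeding these choices into Theorem~\ref{thm:AET2} yields a constant $\delta_0=\delta_0(g,\alpha,V,T,A,\rho)>0$ which, by the preceding paragraph, is in fact a function of $(g,\alpha,d,T)$ alone. For any closed $2$-form $\sigma$ with $|\sigma|_g<\delta_0$, Theorem~\ref{thm:AET2} then produces, for almost every $s\in[d,T]$, a periodic orbit of $X_{H_g,\sigma}$ with respect to $\omega_\sigma=\omega_0+\pi^*\sigma$ on the level set $\{H_g=s\}$ whose projection to $M$ represents $\alpha$; this is precisely the conclusion of the corollary. There is no genuine obstacle: the entire analytic content—Floer homology computations, invariance under symplectic deformation, and the sandwiching argument—has already been absorbed into Theorem~\ref{thm:AET2}, and the proof of the corollary reduces to the bookkeeping above.
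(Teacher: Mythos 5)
Your proposal is correct and follows precisely the route the paper intends: specialize Theorem~\ref{thm:AET2} to $H=H_g$ with $r(s)=\sqrt{2s}$ and pick auxiliary data $(V,A,\rho)$ depending only on $(g,\alpha,d,T)$, so that the resulting $\delta_0$ depends only on those quantities. Your explicit choices ($\rho=1$, $V=D_{\sqrt{d}}T^*M$, $A=(\sqrt{2(T+1)}+1)l_\alpha+1$) satisfy all the constraints, so the argument is complete.
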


\begin{remark}
{\rm
The assertions of the above corollaries overlap with Merry's results~\cite{Me0}. Indeed, Corollary~\ref{cor:AET1} complements~\cite[Theorem 1.1]{Me0} in the case that magnetic field $\sigma$ has no bounded primitive in the universal cover. On the other hand, \cite[Theorem 1.1]{Me0} implies Corollary~\ref{cor:AET2}. Merry actually showed a stronger result whenever $M$ admits a metric $g$ of negative curvature:
for each $d>0$ and each nontrivial homotopy class $\alpha\in[S^1,M]$, there is a constant $c=c(g,d)>0$ such that for every $s\in(d,\infty)$ and every closed $2$-form $\sigma$ with $|\sigma|_g<c$ the energy level $\{H_g=s\}$ carries a periodic orbit of $X_{H_g,\sigma}$ whose projection to $M$ represents $\alpha$.
}
\end{remark}

\begin{remark}
{\rm When $M$ is simply connected and $\sigma$ is a non-exact closed $2$-form on $M$,  our method in this paper does not apply to the twisted cotangent bundle $(T^*M,\omega_\sigma)$ since $\omega_\sigma$ is not weakly exact and hence the Floer action functional is not single-valued any more in this case. Fortunately,  very recently this weakly exact assumption is removed  in the context of a general symplectic cohomology for magnetic cotangent bundles, see~\cite{BR,GM}. In particular, the case $M=S^2$ is also considered in~\cite{BR} by Benedetti and Ritter.

}
\end{remark}

The paper is organized as follows. In Section~\ref{sec:2} we recall some background results and definitions of the filtered Floer homology on the twisted cotangent bundle. Section~\ref{sec:3} is mainly devoted to prove the invariance of Floer homology for symplectic deformations.
Before the proof,
the action spectrum properties (see Lemma~\ref{lem:NAS1} and Lemma~\ref{lem:NAS2}) are established in this section. The goal of Section~\ref{sec:4} is to compute the Floer homology in $T^*M$ with its canonical symplectic form $\omega_0$.
In Section~\ref{sec:5} we prove the main theorems and discuss the flows without non-contractible periodic orbits.

\section*{Acknowledgments}
I am deeply grateful to my Ph.D. advisor Guangcun Lu for introducing me to symplectic geometry and for valuable suggestions. My special thanks go to my colleague Xingpeng Dong for teaching me to draw beautiful graphs. I thank Rongrong Jin and Kun Shi for helpful discussions. I am also grateful to Viktor L. Ginzburg for useful comments and especially for his ideas in the proof of  Proposition~\ref{pro:nonexistence}. Finally, 
I am greatly indebted to the anonymous referee for the very carefully reading and helpful suggestions to improve the paper and for pointing out how to obtain Theorem~\ref{thm:3'} and Theorem~\ref{thm:4'}.

\section{Floer homology}\label{sec:2}
\setcounter{equation}{0}

\subsection{Preliminaries}

The Riemannian metric $g$ on $M$ induces a metric $\m$ on $TM$  and a horizontal-vertical splitting of $TT^*M$, together with isomorphisms
$$T_zT^*M=T^h_zT^*M\oplus T_z^vT^*M\cong T_qM\oplus T_q^*M\cong T_qM\oplus T_qM,\quad z=(q,p)\in T^*M.$$
The above splitting gives rise to the almost complex structure $J_g$ on $T^*M$ represented by
\begin{eqnarray}\label{e:Jg}
J_g=\begin{pmatrix}
 0 & -I \\ I & 0
\end{pmatrix}.
\end{eqnarray}
Recall that an almost complex structure $J$ on a symplectic manifold $(W,\omega)$ is  \emph{$\omega$-compatible} if the bilinear form $\omega(\cdot,J\cdot)$ defines a Riemannian metric on $W$.
It is easy to check that $J_g$ is $\omega_0$-compatible for every Riemmanian metric $g$ on $M$.  Denote $G_g(\cdot,\cdot):=\omega_0(\cdot,J_g\cdot)$. When $W$ is a manifold with a Riemanian metric $G$, various $L^\infty$-norms $|\cdot|_G$ are defined by
$$|v|_{G}:=\sup\limits_{x\in W}\|v(x)\|_G\quad \forall \;v\in \Gamma(TW),\quad |\theta|_g:=\sup_{x\in W}\|\theta(x)\|_g\quad \forall \;\theta\in\Omega^k(W),$$
$$|X|_{G}:=\sup\limits_{x\in W}\sup\{\|X(x)v\|_G\big|v\in T_xW,\;\|v\|_G=1\}\quad \forall \;X\in\Gamma({\rm End} (TW)).$$
Let $\mathscr{J}(T^*M)$ be the set of one-periodic almost complex structures on $T^*M$ with finite $|\cdot|_{G_g}$-norm, and denote $$\mathscr{J}(\omega_\sigma):=\{J\in\mathscr{J}(T^*M):J\;\hbox{is $\omega_\sigma$-compatible on $(T^*M,\omega_\sigma)$}\}.$$

Floer homology is the main tool utilized in this paper to prove the existence of periodic orbits of the Hamiltonian flow in twisted bundles. To define the Floer homology of compactly supported functions on a non-compact symplectically aspherical manifold we need to impose certain conditions on the manifold at infinity.
\begin{definition}
{\rm
We say that a symplectic manifold $(W,\omega)$ without boundary is geometrically bounded if there exists an almost complex structure $J$ and a complete Riemannian metric $g$ on $W$ such that
\begin{enumerate}
  \item $J$ is uniformly $\omega$-tame, which means that for all tangent vectors $X$ and $Y$ to $M$
      $$\omega(X,JX)\geq \kappa_1 \|X\|_g^2\quad\hbox{and}\quad|\omega(X,Y)|\leq \kappa_2\|X\|_g\|Y\|_g$$
      for some positive constants $\kappa_1$ and $\kappa_2$.
  \item the injectivity radius of $(W,g)$ is bounded away from zero and the sectional curvature of $(W,g)$ is bounded from above.
\end{enumerate}
}
\end{definition}
\noindent Obviously, closed symplectic manifolds are geometrically bounded; a product of two geometrically bounded sympletic manifolds is such a manifold. For a more detailed discussion of this concept please refer to Chapters V (by J.-C. Sikorav) and X (by M. Audin, F. Lalonde and L. Polterovich) in \cite{AL}. One can easily check that manifolds convex at infinity, e.g., $(\mathbb{R}^{2m},\sum_{i=1}^{2m}dx_i\wedge dy_i)$ and the cotangent bundle $(T^*M,\omega_0)$, are geometrically bounded. It is also well known that every twisted cotangent bundle $(T^*M,\omega_\sigma)$ admits abundant almost complex structures such that it is geometrically bounded (see \cite[Proposition~2.2]{CGK}). In general $J_g\notin \mathscr{J}(\omega_\sigma)$. However, Proposition~4.1 in \cite{Lu} implies that there exists a constant $\varepsilon_0=\varepsilon_0(g)>0$ such that for all $r\geq\varepsilon_0|\sigma|_{g}$,
$$\mathscr{J}(\omega_\sigma)\cap B_{J_g}(r)\neq\emptyset,$$
 where $B_{J_g}(r)$ denotes the open ball of radius $r$ about $J_g$ in $\mathscr{J}(T^*M)$. In fact, Lu in~\cite{Lu} shows that for $r\geq\varepsilon_0|\sigma|_{g}$ we can find an almost complex structure in $\mathscr{J}(\omega_\sigma)\cap B_{J_g}(r)$ such that $(T^*M,\omega_\sigma)$ for the natural metric $G_g$ is geometrically bounded; see also \cite{Me}. This will be very useful in the proof of Theorem~\ref{thm:Invariance} and Theorem~\ref{thm:Inv} since we will use it to choose suitable almost complex structures to estimate the energy of Floer cylinders and to ensure the necessary compactness of moduli spaces of solutions of Floer equations on twisted cotangent bundles.

Finally let us note that the first Chern class satisfies $c_1(T^*M,J)=0$ for each $J\in\mathscr{J}(\omega_\sigma)$ since the twisted cotangent bundle $(T^*M,\omega_\sigma)$ admits a Lagrangian distribution $T^vT^*M$ (see \cite{Se,Me}).

\subsection{The definition of filtered Floer homology}\label{subsec:FFH}
Let $\sigma \in \mathcal{P}(M)$, and let $\alpha\in [S^1,M]$ be a $\sigma$-atoroidal class. We denote $\mathscr{H}$ as the space of smooth compactly supported Hamiltonian functions on $S^1\times D_RT^*M$. For $c>0$ we denote by $\mathscr{H}_c$ the subspace of all Hamiltonian functions $H\in\mathscr{H}$ satisfying $\sup_{S^1\times M}H\leq -c$. Let $\mathfrak{L}_\alpha T^*M$ be the set of all $1$-periodic loops $x$ whose projections to $M$ belong to $\mathfrak{L}_\alpha M$. Fix a reference loop $q_\alpha\in\mathfrak{L}_\alpha M$.
We define the action functional $\mathscr{A}_{H,\sigma}:\mathfrak{L}_\alpha T^*M\to \mathbb{R}$ by
\begin{equation}\label{e:AF}
\mathscr{A}_{H,\sigma}(x)=\int_{S^1}x^*\lambda-\int_{[0,1]\times S^1} w^*\sigma-\int^1_0H(t,x)dt,
\end{equation}
where $w:[0,1]\times S^1\to M$ is any smooth map such that
$$w(0,t)=q_\alpha(t)\quad\hbox{and}\quad w(1,t)=\pi(x(t)).$$
\noindent Since $\alpha$ is a $\sigma$-atoroidal class,
$$\mathscr{A}_{\sigma}(q):=\int_{[0,1]\times S^1} w^*\sigma\quad
\forall\;q\in\mathfrak{L}_\alpha M$$
is independent of the choice of $w$, and therefore  $\mathscr{A}_{H,\sigma}$ is well defined. It is easy to check that the set ${\rm Crit}\mathscr{A}_{H,\sigma}$ of critical points of $\mathscr{A}_{H,\sigma}$ equals $\mathscr{P}_{\alpha}(H,\sigma)$. The set of values of $\mathscr{A}_{H,\sigma}$ on $\mathscr{P}_{\alpha}(H,\sigma)$ is called the \emph{action spectrum} with respect to $\alpha$, and we denote it by
$$\mathscr{S}_\alpha(H,\sigma)=\{\mathscr{A}_{H,\sigma}(x)\big|
x\in \mathscr{P}_{\alpha}(H,\sigma)\}.$$
Consider the space $\mathscr{H}$ with the strong Whitney $C^\infty$-topology. Note that the action spectrum $\mathscr{S}_\alpha(H,\sigma)$ is a compact and measure zero subset of $\mathbb{R}$ for any $H\in \mathscr{H}$, and is lower semicontinuous as a multivalued function of $H\in\mathscr{H}$ (see \cite[Section~4.4]{BPS}).
For $a,b\in \mathbb{R}\cup\{\pm\infty\}$, if $a,b\notin \mathscr{S}_\alpha(H,\sigma)$, we set
$$\mathscr{P}^{(a,b)}_{\alpha}(H,\sigma)=\{x\in \mathscr{P}_{\alpha}(H,\sigma)\big|a<\mathscr{A}_{H,\sigma}(x)<b\}.$$
\noindent In order to define the filtered Floer homology associated to $H,\sigma$ and $\alpha$ we need the following nondegeneracy condition:
\begin{description}
  \item {(C)}\quad Every element $x\in\mathscr{P}^{(a,b)}_{\alpha}(H,\sigma)$ is non-degenerate, that is, the linear map $d\phi_1^{H,\sigma}(x(0))$ does not have $1$ as an eigenvalue, where $\phi_1^{H,\sigma}$ is the time-one map of the flow of $X_{H,\sigma}$.
\end{description}
For any pair $a<b$ we consider the class of \emph{admissible Hamiltonians} by
$$\mathscr{H}_{\sigma;\alpha}^{a,b}=\{H\in\mathscr{H}\big|a,b\notin \mathscr{S}_\alpha(H,\sigma)\}.$$
If $\alpha=0$, then every point in the complement of $\bigcup_{t\in S^1}{\rm supp}(H_t)$ is a degenerate $1$-periodic orbit of $X_{H,\sigma}$ with zero action of $\mathscr{A}_{H,\sigma}$. To avoid these trivial periodic orbits, we require that $0\notin[a,b]$ whenever $\alpha=0$.

Given $H\in\mathscr{H}_{\sigma;\alpha}^{a,b}$ satisfying nondegeneracy condition (C), for every $x=(q(t),p(t))\in\mathscr{P}^{(a,b)}_{\alpha}(H,\sigma)$ we define the index $\mu(x)=-\mu_{CZ}(x)+\nu(x)$ following the paper by Weber \cite{We1}. Here $\mu_{CZ}(x)$ denotes the Conley-Zehnder index of $x$ (see \cite{Sa,SZ}), and $\nu(x):=0$ if the pullback bundle $q^*TM$ over $S^1$ is trivial and $\nu(x):=1$ otherwise. Consider the $\mathbb{Z}_2$-vector space ${\rm CF}^{(a,b)}_{\alpha}(H,\sigma)$\footnotemark[2]\footnotetext[2]{We use the convention that the complex generated by the empty set is zero.} defined by
$${\rm CF}^{(a,b)}_{\alpha}(H,\sigma):={\rm CF}^b_{\alpha}(H,\sigma)/{\rm CF}^a_{\alpha}(H,\sigma),\quad {\rm CF}^a_{\alpha}(H,\sigma):=\bigoplus\limits_{x\in\mathscr{P}^{(-\infty,a)}
_{\alpha}(H,\sigma)}\mathbb{Z}_2x$$
graded by the index $\mu$.

The Floer boundary operator  is defined as follows. Let $J_{gb}$ be an almost complex structure such that $(T^*M,\omega_\sigma)$ is geometrically bounded. Denote by $\mathcal{J}$ the set of smooth time-dependent $\omega_\sigma$-tame almost complex structures on $T^*M$ that are compatible with $\omega_\sigma$ near supp$(H)$ and equal to $J_{gb}$ outside some compact set. Every $J_t\in\mathcal{J}$ give rises to a positive-definite bilinear form on $\mathfrak{L}_\alpha T^*M$.
Given $x_{\pm}\in {\rm CF}^{(a,b)}_{\alpha}(H,\sigma)$ we denote by $\mathcal{M}^\alpha(x_-,x_+,H,J,\sigma)$ the \emph{moduli space} of smooth solution $u:\mathbb{R}\times S^1\to T^*M$ of the Floer differential equation
\begin{equation}\label{e:FDE}
\partial_su+J_t(u)(\partial_tu-X_{H,\sigma}(u))=0
\end{equation}
with the asymptotic boundary conditions
\begin{equation}\label{e:ABC}
\lim\limits_{s\to\pm\infty}u(s,t)=x_{\pm}\quad\hbox{and}\quad
\lim\limits_{s\to\pm\infty}\partial_su(s,t)=0
\end{equation}
uniformly in $t\in S^1$. For every solution of (\ref{e:FDE}) and
(\ref{e:ABC}) we have the energy identity
\begin{equation}\label{e:EI}
E(u):=\int^\infty_{-\infty}\int_0^1\omega_\sigma(\partial_su,
J\partial_su)dsdt=\mathscr{A}_{H,\sigma}(x_-)-\mathscr{A}_{H,\sigma}(x_+).
\end{equation}
Now we observe:
\begin{itemize}
  \item[(i)] The moduli space $\mathcal{M}^\alpha(x_-,x_+,H,J,\sigma)$ is uniformly $C^0$-bounded. This is because $H$ is compactly supported, and $(T^*M,\omega_\sigma)$ with $J_{gb}$ is geometrically bounded; see Chapter V in \cite{AL} or \cite{CGK,Lu}.
  \item[(ii)] Since $\omega_\sigma$ is symplectically aspherical, no bubbling off of holomorphic spheres can occur in $T^*M$. From this fact, the energy identity (\ref{e:EI}) and (i) we deduce that the moduli space  $\mathcal{M}^\alpha(x_-,x_+,H,J,\sigma)$ is compact with respect to $C^\infty$-convergence on compact sets.
  \item[(iii)] For a dense subset $\mathcal{J}_{reg}(H,\sigma)\subset\mathcal{J}$, the linearized operator for equation~(\ref{e:FDE}) is surjective for each finite-energy solution of (\ref{e:FDE}) in the homotopy class $\alpha$ (see \cite{FHS}).

\end{itemize}

For each $H$ satisfying (C), each $J\in\mathcal{J}_{reg}(H,\sigma)$ and each pair $x_{\pm}\in {\rm CF}^{(a,b)}_{\alpha}(H,\sigma)$ the moduli space $\mathcal{M}^\alpha(x_-,x_+,H,J,\sigma)$ is an empty set or a smooth manifold of dimension $\mu(x_+)-\mu(x_-)=\mu_{CZ}(x_+)-\mu_{CZ}(x_-)$; see \cite{Sa}. As usual, the Floer boundary operator $\partial=\partial^{H,J}_{\sigma;\alpha}$ on ${\rm CF}^{(a,b)}_{\alpha}(H,\sigma)$ is defined by

\begin{equation}\label{e:FBO}
\partial x_-=\sum\limits_{\substack{x_+\in {\rm CF}^{(a,b)}_{\alpha}(H,\sigma)
\\ \mu(x_-)-\mu(x_+)=1}}n(x_-,x_+)x_+.
\end{equation}
Here $n(x_-,x_+)$ stands for the number (mod $2$) of elements in the set $\mathcal{M}^\alpha(x_-,x_+,H,J,\sigma)/\mathbb{R}$ (modulo time shift). The operator $\partial$ satisfies $\partial\circ\partial=0$, and the resulting Floer homology groups
\begin{equation}\label{e:FHG}
{\rm HF}^{(a,b)}_{\alpha}(H,\sigma)=\frac{\rm ker\partial}{\rm im\partial}
\end{equation}
are independent of the choice of $J\in\mathcal{J}_{reg}(H,\sigma)$.

\begin{remark}
{\rm
It is unclear whether or not the Floer homology ${\rm HF}^{(a,b)}_{\alpha}(H,\sigma)$ is always independent of the choice of $J_{gb}$. It is independent of the choice of $J_{gb}$ whenever the set of almost complex structures for which $(T^*M,\omega_\sigma)$ is geometrically bounded is connected.
}
\end{remark}

 Assume that $a<b<c$, $0\notin [a,c]$ and $a,b,c\notin \mathscr{S}_\alpha(H,\sigma)$. Then we have the exact sequence of complexes
 \begin{equation}\label{e:esc}
0\to {\rm CF}^{(a,b)}_{\alpha}(H,\sigma)\to{\rm CF}^{(a,c)}_{\alpha}(H,\sigma)\to {\rm CF}^{(b,c)}_{\alpha}(H,\sigma)\to 0.
\end{equation}
This induces the exact sequence at the homology level
\begin{equation}\label{e:esFH}
\cdots\to {\rm HF}^{(a,b)}_{\alpha*}(H,\sigma)\to {\rm HF}^{(a,c)}_{\alpha*}(H,\sigma)\to {\rm HF}^{(b,c)}_{\alpha*}(H,\sigma)\to {\rm HF}^{(a,b)}_{\alpha(*-1)}(H,\sigma)\to\cdots.
\end{equation}

\subsection{Homotopic invariance}
Suppose that $H^{\pm}\in \mathscr{H}_{\sigma;\alpha}^{a,b}$ satisfy (C) and $x_{\pm}\in \mathscr{P}_{\alpha}(H^{\pm},\sigma)$. Let $H^s:\mathbb{R}\to \mathscr{H}$ be a smooth homotopy connecting $H^-$ and $H^+$ such that $H^s=H^-$ for $s\leq 0$ and $H^s=H^+$ for $s\geq 1$. Consider the parameter-dependent Floer equation
\begin{equation}\label{e:PFE}
\partial_su+J^s_t(u)(\partial_tu-X_{H^s_t,\sigma}(u))=0
\end{equation}
which satisfies uniformly in $t\in S^1$ the asymptotic boundary conditions
\begin{equation}\label{e:SABC}
\lim\limits_{s\to\pm\infty}u(s,t)=x_{\pm}\quad\hbox{and}\quad
\lim\limits_{s\to\pm\infty}\partial_su(s,t)=0.
\end{equation}
Here $J^s_t:\mathbb{R}\to \mathcal{J}$ is a \emph{regular homotopy} of smooth families of almost complex structures satisfying
\begin{itemize}
  \item $J^s_t=J^-_t\in\mathcal{J}_{reg}(H^-,\sigma)$ for $s\leq0$.
  \item $J^s_t=J^+_t\in\mathcal{J}_{reg}(H^+,\sigma)$ for $s\geq1$.
  \item $J^s_t$ is constant and and equals to $J_{gb}$ outside some compact set of $D_RT^*M$.
  \item The linearized operator for equation~(\ref{e:PFE}) is surjective for each finite-energy solution of (\ref{e:PFE}) in the homotopy class $\alpha$.
\end{itemize}
The moduli space $\mathcal{M}^\alpha(x_-,x_+,H^s,J^s,\sigma)$ of smooth solutions of (\ref{e:PFE}) satisfying the boundary conditions (\ref{e:SABC}) is $C_{loc}^\infty$-compact. A crucial ingredient for the proof of the compactness is the following energy identity:
\begin{eqnarray}\label{e:PEI}
E(u):&=&\int^\infty_{-\infty}\int_0^1\omega_\sigma(\partial_su,
J^s_t\partial_su)dsdt\nonumber\\
&=&\mathscr{A}_{H^-,\sigma}(x_-)-\mathscr{A}_{H^+,\sigma}(x_+)
-\int^\infty_{-\infty}\int^1_0(\partial_s H^s)(t,u(s,t))dsdt.
\end{eqnarray}
For $|H^+-H^-|$ small enough we can define a chain map (being similar to the argument in~\cite[Section~4.4]{BPS})
$$\widetilde{\Psi}^{\sigma}_{H^+H^-}:{\rm CF}^{(a,b)}_{\alpha}(H^-,\sigma)\to {\rm CF}^{(a,b)}_{\alpha}(H^+,\sigma)$$
which induces an isomorphism
$$\Psi^\sigma_{H^+H^-}:{\rm HF}^{(a,b)}_{\alpha}(H^-,\sigma)\to {\rm HF}^{(a,b)}_{\alpha}(H^+,\sigma).$$
The isomorphism $\Psi^\sigma_{H^+H^-}$ is independent of the choice of the homotopy $H^s$ and $J^s$ by a homotopy of homotopies argument; see \cite{Sa,SZ}. As a result, we can define the Floer homology groups
${\rm HF}^{(a,b)}_{\alpha}(H,\sigma)$ for any $H\in \mathscr{H}_{\sigma;\alpha}^{a,b}$ by a small perturbation since the Hamiltonians satisfying (C) for $a<b$ are dense in $\mathscr{H}_{\sigma;\alpha}^{a,b}$.

\subsection{Monotone homotopies}\label{subsec:MH}
Let $H,K\in \mathscr{H}_{\sigma;\alpha}^{a,b}$ be two functions with $H(t,x)\leq K(t,x)$ for all $(t,x)\in S^1\times D_RT^*M$. Choose a monotone homotopy $s\to H^s\in \mathscr{H}$ from $H$ to $K$ such that $\partial_s H^s\geq0$ everywhere (Here we do not require $H^s$ to be in $\mathscr{H}_{\sigma;\alpha}^{a,b}$ for every $s\in [0,1]$). From the energy identity (\ref{e:PEI}) we deduce that such a homotopy induces a natural homomorphism, which is called \emph{monotone homomorphism}
\begin{equation}\label{e:MH}
\Psi_{KH}^\sigma:{\rm HF}^{(a,b)}_{\alpha}(H,\sigma)\to {\rm HF}^{(a,b)}_{\alpha}(K,\sigma).
\end{equation}
It is well known that these monotone homomorphisms are independent of the choice of the monotone homotopy of Hamiltonians and satisfy the following properties
(see, e.g., \cite{BPS,FH,CFH,SZ,Vi}):
\begin{lemma}\label{lem:mh}
\begin{equation}
\begin{split}
\Psi^\sigma_{HH}={\rm id}\quad\forall\;H\in \mathscr{H}_{\sigma;\alpha}^{a,b},\\
\Psi^\sigma_{KH}\circ\Psi^\sigma_{HG}=\Psi^\sigma_{KG}
\end{split}
\end{equation}
whenever $ G,H,K\in \mathscr{H}_{\sigma;\alpha}^{a,b}$ satisfy $G\leq H\leq K$.
\end{lemma}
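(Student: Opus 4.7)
The plan is to verify both properties by making convenient choices of monotone homotopies, relying throughout on the fact (established by a standard homotopy-of-homotopies argument and already invoked in the definition of $\Psi^\sigma_{KH}$) that the induced homology-level map does not depend on the particular monotone homotopy realizing it. Throughout we may perturb the Hamiltonians slightly so that the nondegeneracy condition (C) holds on $(a,b)$, since ${\rm HF}^{(a,b)}_\alpha(H,\sigma)$ is defined via such a perturbation.

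For $\Psi^\sigma_{HH}=\mathrm{id}$, I would use the constant homotopy $H^s\equiv H$ together with a constant family $J^s_t\equiv J_t\in\mathcal{J}_{reg}(H,\sigma)$. Since $\partial_sH^s\equiv 0$, the energy identity (\ref{e:PEI}) reduces to $E(u)=\mathscr{A}_{H,\sigma}(x_-)-\mathscr{A}_{H,\sigma}(x_+)\geq 0$. In the relevant index-zero setting $\mu(x_-)=\mu(x_+)$, the moduli space of continuation trajectories has expected dimension zero; however, the constant homotopy carries an extra $\mathbb{R}$-translation symmetry, so any non-constant solution would belong to a one-parameter family, forcing the moduli space to be at least one-dimensional, which contradicts transversality. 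The only remaining contributions are the constant cylinders $u(s,t)=x(t)$ for $x\in\mathscr{P}^{(a,b)}_\alpha(H,\sigma)$, so $\tilde\Psi^\sigma_{HH}=\mathrm{id}$ on the chain level and $\Psi^\sigma_{HH}=\mathrm{id}$ on homology.

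For the composition identity, fix monotone homotopies $G^s$ from $G$ to $H$ and $H^s$ from $H$ to $K$, each supported in $s\in[0,1]$, and form the concatenated family of monotone homotopies with gluing parameter $R\geq 0$:
\[
H^{s,R}=\begin{cases} G^s, & s\leq 1,\\ H, & 1\leq s\leq 1+R,\\ H^{s-1-R}, & s\geq 1+R.\end{cases}
\]
Each $H^{s,R}$ is a legitimate monotone homotopy from $G$ to $K$, so by homotopy independence the induced homology map equals $\Psi^\sigma_{KG}$ for every $R$. On the other hand, as $R\to\infty$ the classical stretching-and-breaking argument produces, for $R\gg 1$, a bijection between index-zero solutions of the $R$-concatenated equation and pairs $(u_1,u_2)$, where $u_1$ is an index-zero $G^s$-trajectory with right limit $y\in\mathscr{P}^{(a,b)}_\alpha(H,\sigma)$ and $u_2$ is an index-zero $H^s$-trajectory with left limit $y$. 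The count of such pairs is exactly the chain-level composition $\tilde\Psi^\sigma_{KH}\circ\tilde\Psi^\sigma_{HG}$, and descending to homology yields $\Psi^\sigma_{KH}\circ\Psi^\sigma_{HG}=\Psi^\sigma_{KG}$.

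The main technical obstacle is the compactness/gluing step in the stretching limit. It requires $C^0$-compactness of Floer cylinders, supplied by the geometric boundedness of $(T^*M,\omega_\sigma)$ together with the compact support of the Hamiltonians; and an a priori bound on the intermediate action slices $\mathscr{A}_{H^{s,R},\sigma}(u(s,\cdot))$, which follows from $\partial_sH^{s,R}\geq 0$ via the standard monotonicity inequality derived from (\ref{e:PEI}), pinning the intermediate action between $\mathscr{A}_{K,\sigma}(x_+)$ and $\mathscr{A}_{G,\sigma}(x_-)$. These two ingredients guarantee that breaking can only occur at $1$-periodic orbits of $H$ in the action window $(a,b)$, and the standard Floer-theoretic treatment then delivers both claims.
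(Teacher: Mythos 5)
The paper does not supply its own proof of this lemma; it labels the properties as ``well known'' and cites \cite{BPS,FH,CFH,SZ,Vi}. Your argument (the constant homotopy plus $\mathbb{R}$-translation symmetry and the energy identity for $\Psi^\sigma_{HH}=\mathrm{id}$, and concatenation with gluing/stretching together with homotopy-of-homotopies independence for $\Psi^\sigma_{KH}\circ\Psi^\sigma_{HG}=\Psi^\sigma_{KG}$) is precisely the standard proof those references give, and it is correct in the present setting since the required compactness is furnished by the geometric boundedness of $(T^*M,\omega_\sigma)$ and the compact support of the Hamiltonians.
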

\begin{lemma}[{\rm see~\cite{Vi} or \cite[Section~4.5]{BPS}}]\label{lem:iso}
If $K^s$ is a monotone homotopy from $H$ to $K$ such that
$K^s\in \mathscr{H}_{\sigma;\alpha}^{a,b}$ for every $s\in[0,1]$, then $\Psi^\sigma_{KH}$ is an isomorphism.
\end{lemma}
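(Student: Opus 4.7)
The plan is to reduce the lemma to the small-perturbation continuation isomorphism already treated in the Homotopic Invariance subsection. The hypothesis $K^s\in\mathscr{H}^{a,b}_{\sigma;\alpha}$ for every $s\in[0,1]$ says that $a$ and $b$ avoid $\mathscr{S}_\alpha(K^s,\sigma)$ along the whole homotopy. Since $\mathscr{S}_\alpha(\,\cdot\,,\sigma)$ is lower semicontinuous in $H$ and $[0,1]$ is compact, the quantity $\min(\mathrm{dist}(a,\mathscr{S}_\alpha(K^s,\sigma)),\mathrm{dist}(b,\mathscr{S}_\alpha(K^s,\sigma)))$ is bounded below by some $\eta>0$ uniformly in $s$. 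Combined with uniform continuity of $s\mapsto K^s$ in the strong Whitney $C^\infty$-topology, this gives a partition $0=s_0<s_1<\cdots<s_N=1$ so fine that, for each $i$, the pair $(K^{s_{i-1}},K^{s_i})$ falls within the small-perturbation regime of the Homotopic Invariance subsection, producing an isomorphism $\widetilde{\Psi}^\sigma_{K^{s_i}K^{s_{i-1}}}\colon {\rm HF}^{(a,b)}_\alpha(K^{s_{i-1}},\sigma)\to {\rm HF}^{(a,b)}_\alpha(K^{s_i},\sigma)$.

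Next, I would apply Lemma~\ref{lem:mh} to the chain $K^{s_0}\leq K^{s_1}\leq\cdots\leq K^{s_N}$ within $\mathscr{H}^{a,b}_{\sigma;\alpha}$, which yields the factorisation
$$\Psi^\sigma_{KH}=\Psi^\sigma_{K^{s_N}K^{s_{N-1}}}\circ\cdots\circ\Psi^\sigma_{K^{s_1}K^{s_0}}.$$
To identify each monotone factor $\Psi^\sigma_{K^{s_i}K^{s_{i-1}}}$ with $\widetilde{\Psi}^\sigma_{K^{s_i}K^{s_{i-1}}}$, I would run a homotopy-of-homotopies argument between the chosen monotone interpolation and a small (non-monotone) interpolation, both joining $K^{s_{i-1}}$ to $K^{s_i}$. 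Because the intermediate Hamiltonians stay in a small $C^\infty$-neighbourhood where $a$ and $b$ remain outside the spectrum, the energy identity~(\ref{e:PEI}) ensures that every continuation solution encountered in the parameter square respects the filtration $(a,b)$, so the two induced maps agree on Floer homology. Since $\widetilde{\Psi}^\sigma_{K^{s_i}K^{s_{i-1}}}$ is an isomorphism, so is $\Psi^\sigma_{K^{s_i}K^{s_{i-1}}}$, and therefore the composition $\Psi^\sigma_{KH}$ is an isomorphism as well.

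The main obstacle will be the uniformity demanded in the construction of the partition: one needs a single subdivision of $[0,1]$ fine enough that, on \emph{every} subinterval simultaneously, the homotopy lies in the small-perturbation regime while also leaving enough room for the auxiliary interpolations used in the homotopy-of-homotopies argument to respect the filtration $(a,b)$. The existence of the uniform spectral gap $\eta$ above, together with compactness of $[0,1]$ and continuity of the assignment $s\mapsto\mathscr{A}_{K^s,\sigma}$ on the relevant finite-dimensional set of candidate periodic orbits (again furnished by the compactness properties already invoked for $\mathscr{S}_\alpha$), makes such a subdivision possible, after which the result follows from the functoriality packaged in Lemma~\ref{lem:mh}.
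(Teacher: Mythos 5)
Your proposal is correct and follows the same subdivision argument as the paper's cited source (BPS, Section~4.5, and Viterbo): use the uniform spectral gap furnished by lower semicontinuity of $\mathscr{S}_\alpha(\cdot,\sigma)$ and compactness of $[0,1]$ to build a fine partition, identify each short monotone step with the small-perturbation continuation isomorphism (independence of homotopy), and compose via Lemma~\ref{lem:mh}. The paper does not spell out its own proof but delegates to that reference, and your argument reproduces it faithfully.
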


\begin{remark}\label{rem:HFfcsf}
{\rm
To simplify the notation, we omit the mark $0$ whenever $\sigma=0$, and abbreviate, for example, $\mathscr{A}_{H,0}$, $\mathscr{H}_{0;\alpha}^{a,b}$,
${\rm HF}^{(a,b)}_{\alpha}(H,0)$ and $\Psi^0_{H^{+}H^{-}}$ by $\mathscr{A}_{H}$, $\mathscr{H}_{\alpha}^{a,b}$, ${\rm HF}^{(a,b)}_{\alpha}(H)$ and $\Psi_{H^{+}H^{-}}$ respectively.
All the above arguments in this section go through word for word whenever the closed two-form $\sigma$ vanishes identically . In this case, there is no restriction on the free homotopy class $\alpha\in[S^1,M]$, and the Floer homology for $H\in\mathscr{H}_{\alpha}^{a,b}$ defined here is exactly as that in Weber's paper~\cite{We0}. Indeed, $J_g\in\mathscr{J}(\omega_0)$ is an almost complex structure such that $(T^*M,\omega_0)$ is convex at infinity, and hence is geometrically bounded for $G_g$. Using  \cite[Proposition~2.3]{We0}, Weber defines the filtered Floer homology of a broader class of admissible Hamiltonians:

\begin{eqnarray}
\mathscr{K}_{R;\alpha}^{a,b}:=&\big\{&H\in C^\infty(S^1\times T^*M)\big|\exists\tau\geq0\;\exists c\in\mathbb{R}\;\hbox{such that}\;H_t(q,p)=\tau\|p\|_g+c\;\hbox{if}\;\notag\\
&&\|p\|_g\geq R\;\hbox{with}\;\{a,b\}\cap \mathscr{S}_\alpha(H)=\emptyset,\;\hbox{and}\;\tau\notin\Lambda_\alpha\;
\hbox{or}\;c\notin[a,b]\big\}\notag
\end{eqnarray}
which satisfies $\mathscr{K}_{R;\alpha}^{a,b}\supseteq\mathscr{H}_{\alpha}^{a,b}$.
Here we emphasize that Lemma~\ref{lem:iso} also holds for monotone homotopies in $\mathscr{K}_{R;\alpha}^{a,b}$; see \cite{We0}. This will be very useful to compute Floer homology in Section~\ref{sec:4}.
}
\end{remark}

\section{Symplectic deformations of Floer homology}\label{sec:3}
\setcounter{equation}{0}
Floer's work~\cite{Fl0,Fl1,Fl2} tells us that Floer homology is a topological invariant of a closed symplectically aspherical  manifold on which different symplectic forms give rise to the same Floer homology (up to isomorphisms). A direct proof of such a fact can be found in the paper by Viterbo~\cite{Vi}. In this section, by following the idea of Bae and Frauenfelder~\cite{BF}, we discuss the continuation homomorphisms for symplectic deformations under additional hypotheses concerning the sympletic forms. For related results about Floer homology under symplectic perturbations, we refer to the paper by Ritter~\cite{Ri}.

\subsection{Quadratic isoperimetric inequality}
\begin{lemma}\label{lem:QII}
Assume that $\sigma \in \mathcal{P}(M)$ and $\alpha\in[S^1,M]$ is a $\sigma$-atoroidal class. Then for every $q\in\mathfrak{L}_\alpha M$ we have
$$\big|\mathscr{A}_{\sigma}(q)\big|\leq \epsilon_0\bigg(
\int^1_0\|\dot{q}(t)\|_gdt\bigg)^2+\epsilon_1,$$
where $\epsilon_0=\epsilon_0(M,g,\sigma)>0$ and $\epsilon_1=\epsilon_1(M,g,\sigma,\alpha)>0$ are some constants.
\end{lemma}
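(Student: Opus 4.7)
The plan is to lift the cylinder $w$ in the definition~(\ref{e:AF}) of $\mathscr{A}_\sigma$ to the universal cover $\tilde M$, apply Stokes' theorem with a primitive $\tau\in\Omega^1(\tilde M)$ of $\tilde\sigma$ having at most linear growth, and estimate the four boundary integrals. Fix once and for all such a primitive $\tau$ (available because $\sigma\in\mathcal{P}(M)$), a base point $x_0\in\tilde M$, and a lift $\tilde q_\alpha:\mathbb{R}\to\tilde M$ of $q_\alpha$ with $\tilde q_\alpha(0)=x_0$ and $\tilde q_\alpha(t+1)=T_\alpha\tilde q_\alpha(t)$ for the deck transformation $T_\alpha$ representing $\alpha$. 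Set $D:=\mathrm{diam}(M,g)$ and let $C_\tau>0$ be a constant such that $\|\tau_z\|_{\tilde g}\le C_\tau(d_{\tilde g}(x_0,z)+1)$ for all $z\in\tilde M$.

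Given $q\in\mathfrak{L}_\alpha M$ with $L:=\int_0^1\|\dot q(t)\|_g\,dt$, I would choose the homotopy $w:[0,1]\times S^1\to M$ between $q_\alpha$ and $q$ so that the auxiliary curve $s\mapsto w(s,0)$ is a minimizing geodesic from $q_\alpha(0)$ to $q(0)$; in particular it has length at most $D$. Lifting $w$ to $\tilde w:[0,1]\times[0,1]\to\tilde M$ along $\tilde q_\alpha$, the periodicity $w(s,t+1)=w(s,t)$ forces $\tilde w(s,t+1)=T_\alpha\tilde w(s,t)$, so $\tilde q(t):=\tilde w(1,t)$ is a lift of $q$ with $\tilde q(0)\in\overline{B_{x_0}(D)}$ and $\tilde q|_{[0,1]}\subset\overline{B_{x_0}(D+L)}$. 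Writing $\eta_0(s):=\tilde w(s,0)$ and $\eta_1(s):=\tilde w(s,1)=T_\alpha\eta_0(s)$, both paths have $\tilde g$-length at most $D$ and lie in $\overline{B_{x_0}(D)}$ and $\overline{B_{T_\alpha x_0}(D)}$ respectively. Since $\alpha$ is $\sigma$-atoroidal, $\mathscr{A}_\sigma(q)=\int_{[0,1]^2}\tilde w^*d\tau$ and Stokes' theorem gives
\begin{equation*}
\mathscr{A}_\sigma(q)=\int_{\tilde q|_{[0,1]}}\tau+\int_{\eta_0}\tau-\int_{\eta_1}\tau-\int_{\tilde q_\alpha|_{[0,1]}}\tau.
\end{equation*}

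The last term depends only on $(M,g,\sigma,\alpha,q_\alpha)$, and the two boundary terms are bounded by constants depending on $D$, $d_{\tilde g}(x_0,T_\alpha x_0)$ and $C_\tau$, hence by a constant $C(M,g,\sigma,\alpha)$. The main term obeys
\begin{equation*}
\left|\int_{\tilde q|_{[0,1]}}\tau\right|\le\int_0^1\|\tau_{\tilde q(t)}\|_{\tilde g}\,\|\dot{\tilde q}(t)\|_{\tilde g}\,dt\le C_\tau(D+L+1)\,L=C_\tau L^2+C_\tau(D+1)L
\end{equation*}
by the linear growth hypothesis. Applying the elementary inequality $aL\le\eta L^2+a^2/(4\eta)$ to absorb the linear-in-$L$ contribution into the quadratic one with an arbitrarily small extra coefficient, I obtain $|\mathscr{A}_\sigma(q)|\le\epsilon_0 L^2+\epsilon_1$ with $\epsilon_0=\epsilon_0(M,g,\sigma)$ governed solely by $C_\tau$ and $\epsilon_1=\epsilon_1(M,g,\sigma,\alpha)$ collecting all $\alpha$-dependent constants.

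The main obstacle I foresee is bookkeeping the constants so that the quadratic coefficient $\epsilon_0$ depends only on $(M,g,\sigma)$ and not on $\alpha$, and verifying that $\sigma\in\mathcal{P}(M)$ really furnishes a single primitive $\tau$ with uniform linear growth (the cofilling function is an infimum, so strictly speaking one must exhibit one primitive realizing the growth rate up to a multiplicative constant). The remainder — choosing the short homotopy $w$, performing the Stokes decomposition, and estimating the pieces — is routine once these points are settled.
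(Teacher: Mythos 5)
Your high-level strategy---lift the cylinder $w$ to $\tilde M$, apply Stokes with a linear-growth primitive $\tau$ of $\tilde\sigma$, and estimate the boundary terms---is precisely the one underlying the cited references (the paper itself gives no proof, referring to Lemma~2.7 of Bae--Frauenfelder and Lemma~3 of Frauenfelder--Merry--Paternain). Your self-flagged concerns about extracting a single primitive $\tau$ from the cofilling bound, and about keeping $\epsilon_0$ independent of $\alpha$, are legitimate housekeeping points that are settled in those references. However, the step you classify as ``routine'' is where the real content lives, and as written it contains a genuine error.

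The claim that one can always choose the free homotopy $w$ so that $s\mapsto w(s,0)$ is a \emph{minimizing geodesic} from $q_\alpha(0)$ to $q(0)$, and hence $\tilde q(0)\in\overline{B_{x_0}(D)}$, is false in general. Given a path $\gamma$ from $q_\alpha(0)$ to $q(0)$, a free homotopy $w$ with $w(\cdot,0)=\gamma$ exists if and only if $q$ is homotopic rel basepoint to $\gamma^{-1}\ast q_\alpha\ast\gamma$; equivalently, once a reference homotopy is fixed, the admissible conjugating paths form a coset of the centralizer $Z([q_\alpha])\subset\pi_1(M)$, and the minimizing geodesic need not lie in that coset. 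Concretely, on a closed hyperbolic surface with $q_\alpha$ based at $p_0$, take $q=\gamma\ast q_\alpha\ast\gamma^{-1}$ for a long loop $\gamma$ based at $p_0$ with $[\gamma]\notin Z([q_\alpha])$. Then $q(0)=q_\alpha(0)$ so the minimizing geodesic is the constant path, but $[q]\ne[q_\alpha]$ in $\pi_1(M,p_0)$, so no $w$ with constant $w(\cdot,0)$ exists. In the corresponding lift picture, the allowed lifts of $q$ are $\{h\tilde q_0: h\in Z(T_\alpha)\}$, and for a negatively curved $M$ the centralizer is cyclic, so $\tilde q(0)$ is pinned far from $x_0$ (at distance comparable to $|\gamma|$), not within $D$. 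The bound $\tilde q(0)\in\overline{B_{x_0}(D)}$, on which the estimate of $\int_{\eta_0}\tau$, of $\int_{\eta_1}\tau$, and of $\int_{\tilde q|_{[0,1]}}\tau$ all rest, therefore fails.

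The correct target is an estimate of the form $d_{\tilde g}(x_0,\tilde q(0))\leq L+C(\alpha)$ (with $L=\int_0^1\|\dot q\|_g\,dt$), which is still compatible with the final quadratic bound: replacing $D$ by $L+C(\alpha)$ throughout simply produces extra $L^2$ and $L$ contributions that your absorption inequality $aL\le\eta L^2+a^2/(4\eta)$ still handles, with $\epsilon_0$ picking up only $(M,g,\sigma)$-dependent constants and $\epsilon_1$ absorbing the $\alpha$-dependence. But proving this displacement bound requires a genuine argument---essentially a quantitative version of the statement that any $z$ with $d(z,T_\alpha z)\le L$ can be brought within distance $O(L)+C(\alpha)$ of $x_0$ by an element of $Z(T_\alpha)$---and that is exactly the nontrivial core of Lemma~3 in Frauenfelder--Merry--Paternain. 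So the ``routine'' part of your plan is in fact the missing idea, and the two obstacles you anticipated are downstream of it.
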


This lemma is based on Lemma~2.7 in \cite{BF}; for a proof of it we refer to~\cite[Lemma~3]{FMP0}.

\begin{remark}\label{rem:lgc}
{\rm It is easy to check that $\mathcal{P}(M)$ is a linear space. Moreover, if $\sigma\in\mathcal{P}(M)$, for any $\delta\in\mathbb{R}$ it holds that
$$\big|\mathscr{A}_{\delta\sigma}(q)\big|\leq |\delta|\epsilon_0\bigg(
\int^1_0\|\dot{q}(t)\|_gdt\bigg)^2+|\delta|\epsilon_1,$$
where the constants $\epsilon_0$ and $\epsilon_1$ are given as in Lemma~\ref{lem:QII}. By Remark~\ref{rem:LGC}, for every closed Riemannian manifold $(M,g)$ of negative curvature, the constants $\epsilon_0(M,g,\sigma)$ and $\epsilon_1(M,g,\sigma,\alpha)$ in Lemma~\ref{lem:QII} converge to zero as $|\sigma|_g\to 0$.

}
\end{remark}

\subsection{Gap estimates for action spectrum}

\begin{lemma}\label{lem:NAS1}
Let $H\in\mathscr{H}$.
Suppose that $a\in\mathbb{R}$ is not in the action spectrum of $\mathscr{A}_{H}$. Then there exist some constants $\varepsilon_0=\varepsilon_0(H,g,\sigma,a,\alpha)>0$ and $\delta_0=\delta_0(H,g,\sigma,a,\alpha)>0$ such that if $|\delta|<\delta_0$, then
$[a-\varepsilon_0,a+\varepsilon_0]\cap\mathscr{S}_\alpha(H,\delta\sigma)
=\emptyset$.
\end{lemma}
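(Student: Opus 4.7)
My plan is to argue by contradiction. Assume the assertion fails; then there exist sequences $\delta_n\to 0$ and $x_n\in\mathscr{P}_\alpha(H,\delta_n\sigma)$ with $\mathscr{A}_{H,\delta_n\sigma}(x_n)\to a$. The goal is to extract a $C^0$-limit $x_\infty:S^1\to T^*M$ which is a $1$-periodic orbit of $X_H$ representing $\alpha$, and to verify that $\mathscr{A}_{H,\delta_n\sigma}(x_n)\to\mathscr{A}_{H}(x_\infty)$. This will place $a$ in $\mathscr{S}_\alpha(H)$, contradicting the hypothesis.

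First I want uniform compactness of the family $\{x_n\}$. Since $H$ is compactly supported in $D_RT^*M$, the vector field $X_{H,\delta\sigma}$ vanishes off a fixed compact set $K\subset D_RT^*M$ containing the projection of $\operatorname{supp}(H)$ to $T^*M$. Any orbit that never meets $K$ is constant, which forces $\alpha=0$ and zero action; since $\mathscr{A}_{H,\delta_n\sigma}(x_n)\to a\neq 0$ (as $0\in\mathscr{S}_0(H)$ whenever $\alpha=0$), this case is excluded for all large $n$. For the non-constant orbits, on the compact set $K$ the twisted form $\omega_{\delta\sigma}$ is a uniformly small perturbation of $\omega_0$ for, say, $|\delta|\leq 1$, yielding a uniform pointwise bound $|X_{H,\delta\sigma}|_g\leq C$ depending only on $H$, $g$, and $\sigma$. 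Combined with the fact that $x_n$ only moves while inside $K$, this confines $x_n$ to a fixed compact neighborhood $K'\supset K$ and gives a uniform Lipschitz estimate $\|\dot x_n\|_g\leq C$ on all of $S^1$.

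Arzel\`a-Ascoli then produces, along a subsequence, a $C^0$-limit $x_\infty$. Because $X_{H,\delta\sigma}$ depends smoothly on $\delta$ and $\delta_n\to 0$, the right-hand side of the Hamiltonian equation $\dot x_n=X_{H,\delta_n\sigma}(t,x_n)$ converges uniformly, so the convergence upgrades to $C^1$ (indeed $C^\infty$ by bootstrapping) and the limit satisfies $\dot x_\infty=X_H(t,x_\infty)$. Invariance of the free-homotopy class under $C^0$-small perturbations of loops gives $[\pi\circ x_\infty]=\alpha$, hence $x_\infty\in\mathscr{P}_\alpha(H)$. For the action, writing $\mathscr{A}_{H,\delta_n\sigma}(x_n)=\int_{S^1}x_n^*\lambda-\delta_n\mathscr{A}_\sigma(\pi\circ x_n)-\int_0^1H(t,x_n)\,dt$, the first and third summands converge to those for $x_\infty$ by $C^1$-convergence. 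For the twisting term, Lemma~\ref{lem:QII} applied to the loops $\pi\circ x_n$, whose lengths are bounded uniformly by the speed bound, gives $|\mathscr{A}_\sigma(\pi\circ x_n)|\leq\epsilon_0 C^2+\epsilon_1$; multiplying by $\delta_n\to 0$ kills this term. Therefore $a=\mathscr{A}_{H}(x_\infty)\in\mathscr{S}_\alpha(H)$, a contradiction.

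The only real obstacle is the compactness step: I must prevent the perturbed orbits $x_n$ from escaping to infinity or oscillating wildly as $\delta_n\to 0$. This is handled entirely by the compact support of $H$ combined with the uniform non-degeneracy of $\omega_{\delta\sigma}$ on $K$ for small $\delta$; once this is in place, the argument reduces to a standard continuity-of-action-spectrum argument whose sole quantitative input is the linear-growth bound provided by Lemma~\ref{lem:QII}.
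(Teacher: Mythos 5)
Your proof is correct and follows essentially the same route as the paper: a contradiction argument combined with a uniform bound on $X_{H,\delta\sigma}$ (from compact support of $H$ and non-degeneracy of $\omega_{\delta\sigma}$ for small $\delta$), Arzel\`a--Ascoli to extract a limit orbit of $X_H$, and the quadratic isoperimetric inequality (Lemma~\ref{lem:QII}) to make the twisting term $\delta_n\mathscr{A}_\sigma(\pi\circ x_n)$ vanish in the limit. The paper carries out the vector-field bound via explicit $G_g$-estimates (the inequalities~(\ref{e:ub})) rather than invoking smooth dependence on $\delta$, but this is a stylistic rather than a substantive difference.
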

\begin{proof}
 Arguing by contradiction, suppose that there is a sequence of number $\{\delta_k\}_{k\in \mathbb{N}}\subseteq \mathbb{R}$ such that $|\delta_k|<1/(k|\sigma|_g)$ (if $\sigma=0$ let $\delta_k=0$ for all $k$), and for every $\sigma_k:=\delta_k\sigma$ there exists some $x_k\in\mathscr{P}_{\alpha}(H,\sigma_k)$, that is,
 \begin{equation}\label{e:hpo}
\dot{x}_k=X_{H,\sigma_k}(t,x_k)\quad \hbox{with}\quad x_k(0)=x_k(1),
\end{equation}
such that $\mathscr{A}_{H,\sigma_k}(x_k)=a_k\in(a-1/k,a+1/k)$.
Since $$\iota(X_{H})\omega_0=dH_t=\iota(X_{H,\sigma_k})\omega_{\sigma_k}
=\iota(X_{H,\sigma_k})\omega_0+\iota(X_{H,\sigma_k})\pi^*\sigma_k,$$
we deduce that
\begin{equation}\notag
\omega_0\big(X_{H}-X_{H,\sigma_k},J_g(X_{H}-X_{H,\sigma_k})\big)
=\pi^*\sigma_k\big(X_{H,\sigma_k},J_g(X_{H}-X_{H,\sigma_k})\big),
\end{equation}
where $J_g$ is defined as in (\ref{e:Jg}). Then we have
\begin{equation}\notag
\|X_{H}-X_{H,\sigma_k}\|^2_{G_g}\leq\|\sigma_k\|_g
\|X_{H,\sigma_k}\|_{G_g}\|X_{H}-X_{H,\sigma_k}\|_{G_g}
\end{equation}
which implies that
\begin{equation}\label{e:ub}
\|X_{H}-X_{H,\sigma_k}\|_{G_g}\leq\|\sigma_k\|_g\|X_{H,\sigma_k}\|_{G_g}\quad
\hbox{and}\quad
\|X_{H,\sigma_k}\|_{G_g}\leq\frac{\|X_{H}\|_{G_g}}{1-\|\sigma_k\|_g}
\end{equation}
for $|\sigma_k|_g<1$. Combining this with the fact that $H$ is compactly supported in $S^1\times D_RT^*M$ we deduce that $X_{H,\sigma_k}$ is uniformly bounded, and hence $x_k(t)$ is equicontinuous. Then, by Arzela-Ascoli theorem, passing to a subsequence, $x_k(t)$ converges to some $x_0(t)$ in $D_RT^*M$. We claim that $x_0(t)$ is a Hamiltonian periodic orbit of $H$ for $\omega_0$. For $k$ large enough, we may assume without loss of generality that $H$ is defined on $\mathbb{R}^{2n}$ (by using some local  coordinate near $x_0(t)$). Now we only need to prove
\begin{equation}\notag
x_0(t)-x_0(0)=\int^t_0X_H(s,x_0(s))ds.
\end{equation}
Note that $$x_0(t)-x_0(0)=\lim_{k\to\infty}(x_k(t)-x_k(0))
=\lim_{k\to\infty}\int^t_0\dot{x}_k(s)ds.$$ We compute
\begin{eqnarray}\notag
x_0(t)-x_0(0)-\int^t_0X_H(s,x_0(s))ds&=&\lim\limits_{k\to\infty}
\int^t_0\big(\dot{x}_k(s)-X_H(s,x_0(s))\big)ds\nonumber\\
&=&\lim\limits_{k\to\infty}
\int^t_0\big(\dot{x}_k(s)-X_{H,\sigma_k}(s,x_k(s))\big)ds\nonumber\\
&&+\lim\limits_{k\to\infty}
\int^t_0\big(X_{H,\sigma_k}(s,x_k(s))-X_{H}(s,x_k(s))\big)ds\nonumber\\
&&+\lim\limits_{k\to\infty}
\int^t_0\big(X_{H}(s,x_k(s))-X_{H}(s,x_0(s))\big)ds.\nonumber
\end{eqnarray}
In the last equality, the first term is zero due to (\ref{e:hpo}), the second term is zero since $X_{H,\sigma_k}$ uniformly converges to $X_H$ by (\ref{e:ub}) and the compactness of supp$(H)$, and the third term is zero since $x_k(t)$  uniformly tends to $x_0(t)$. Let $q_k(t)=\pi(x_k(t))$. By lemma~\ref{lem:QII}, we have
\begin{eqnarray}\notag
\big|\mathscr{A}_{\sigma_k}(q_k)\big|&\leq& \delta_k\epsilon_0\bigg(
\int^1_0\|\dot{q}_k(t)\|_gdt\bigg)^2+\delta_k\epsilon_1\nonumber\\
&\leq&\delta_k\epsilon_0\bigg(
\int^1_0\|X_{H,\sigma_k}\|_{G_g}dt\bigg)^2+\delta_k\epsilon_1\to0\quad
\hbox{as} \;k\to0.\nonumber\\
\end{eqnarray}
It follows that
$$a=\lim\limits_{k\to\infty}\mathscr{A}_{H,\sigma_k}(x_k)=\lim\limits_{k\to\infty}\mathscr{A}_{H}(x_k)
-\lim\limits_{k\to\infty}\mathscr{A}_{\sigma_k}(q_k)=\mathscr{A}_{H}(x_0)$$
which contradicts our assumption that $a$ is not in the action spectrum of $\mathscr{A}_{H}$.
\end{proof}

By Remark~\ref{rem:lgc}, the proof of Lemma~\ref{lem:NAS1} implies

\begin{lemma}\label{lem:NAS2}
Let $(M,g)$ be a closed Riemannian manifold of non-positive curvature, and let $H\in\mathscr{H}$.
Suppose that $a\in\mathbb{R}$ is not in the action spectrum of $\mathscr{A}_{H}$, and that $\sigma$ is any closed $2$-form on $M$. Then there exists some constants $\delta_0=\delta_0(H,g,a,\alpha)>0$ and $\varepsilon_0=\varepsilon_0(H,g,a,\alpha)>0$ such that if $|\sigma|_g<\delta_0$, then $[a-\varepsilon_0,a+\varepsilon_0]\cap \mathscr{S}_\alpha(H,\sigma)=\emptyset$.
\end{lemma}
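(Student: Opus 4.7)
The plan is to argue by contradiction in direct parallel with the proof of Lemma~\ref{lem:NAS1}, substituting for the one--parameter family $\delta_k \sigma$ an arbitrary sequence of closed $2$--forms $\sigma_k$ with $|\sigma_k|_g \to 0$. Suppose the lemma fails. Then there exist closed $2$-forms $\sigma_k$ on $M$ with $|\sigma_k|_g \to 0$ together with periodic orbits $x_k \in \mathscr{P}_\alpha(H,\sigma_k)$ whose actions $a_k = \mathscr{A}_{H,\sigma_k}(x_k)$ converge to the forbidden value $a$. The first step is to show that the Hamiltonian vector fields $X_{H,\sigma_k}$ are uniformly bounded. Reproducing verbatim the algebraic manipulation in (\ref{e:ub}) from the identity $\iota(X_H)\omega_0 = \iota(X_{H,\sigma_k})\omega_{\sigma_k}$ gives
$$\|X_H - X_{H,\sigma_k}\|_{G_g} \leq |\sigma_k|_g\,\|X_{H,\sigma_k}\|_{G_g}, \qquad \|X_{H,\sigma_k}\|_{G_g} \leq \frac{\|X_H\|_{G_g}}{1-|\sigma_k|_g}$$
for $k$ large enough. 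Since $H$ is compactly supported, $X_H$ is bounded in $G_g$-norm, and so the loops $x_k$ are equicontinuous and stay in a compact subset of $D_R T^*M$.

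Next, by Arzela--Ascoli I extract a subsequence $x_k \to x_0$ uniformly. The displayed estimate above shows $X_{H,\sigma_k} \to X_H$ uniformly on the compact support of $H$, and passing to the limit in the integrated version of $\dot x_k = X_{H,\sigma_k}(t,x_k)$ exactly as in the computation immediately following (\ref{e:ub}) yields $\dot x_0 = X_H(t,x_0)$ with $x_0(0)=x_0(1)$. Hence $x_0$ is a $1$-periodic orbit of $X_H$ with respect to the canonical form $\omega_0$, and its projection $q_0 = \pi(x_0)$ still represents $\alpha$.

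The decisive step is to show $\mathscr{A}_{\sigma_k}(q_k) \to 0$, where $q_k = \pi(x_k)$, which then forces $a = \lim \mathscr{A}_{H,\sigma_k}(x_k) = \mathscr{A}_H(x_0)$ and contradicts the hypothesis $a \notin \mathscr{S}_\alpha(H)$. This is where the curvature hypothesis enters, and is the main point of divergence from Lemma~\ref{lem:NAS1}: there, the scalar $|\delta_k|$ factored out and absorbed everything, while here the sequence $\sigma_k$ is arbitrary, so one needs the constants in the isoperimetric inequality of Lemma~\ref{lem:QII} themselves to go to zero. This is precisely the content of Remark~\ref{rem:lgc} (via Remark~\ref{rem:LGC}): on a manifold of negative sectional curvature the constants $\epsilon_0(M,g,\sigma_k)$ and $\epsilon_1(M,g,\sigma_k,\alpha)$ both tend to $0$ as $|\sigma_k|_g \to 0$. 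Combined with the uniform bound $\int_0^1 \|\dot q_k(t)\|_g\,dt \leq \int_0^1 \|X_{H,\sigma_k}\|_{G_g}\,dt = O(1)$ coming from the first step, Lemma~\ref{lem:QII} then produces $|\mathscr{A}_{\sigma_k}(q_k)| \to 0$, closing the contradiction. The main obstacle is therefore conceptual rather than technical: it is the uniform vanishing of the isoperimetric constants under $|\sigma_k|_g \to 0$, which is exactly what Remark~\ref{rem:lgc} provides and which the proof of Lemma~\ref{lem:NAS1} never needed to invoke directly.
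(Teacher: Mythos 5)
Your proposal is correct and follows precisely the paper's own route: the paper states that Lemma~\ref{lem:NAS2} follows from the proof of Lemma~\ref{lem:NAS1} combined with Remark~\ref{rem:lgc}, and your argument simply unpacks this, correctly identifying the vanishing of the isoperimetric constants $\epsilon_0(M,g,\sigma_k)$ and $\epsilon_1(M,g,\sigma_k,\alpha)$ as $|\sigma_k|_g\to 0$ (guaranteed by Remark~\ref{rem:lgc} via Remark~\ref{rem:LGC}) as the one genuinely new ingredient over the earlier proof. No gaps.
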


\begin{remark}\label{rem:ppo}
{\rm Under the hypotheses of  Lemma~\ref{lem:NAS1}, if, moreover,  $\{H_k\}_{k\in\mathbb{N}}\subseteq\mathscr{H}$ converges to $H$ in the $C^\infty$-topology, then we conclude that there exists a positive integer $k_0>0$ such that $[a-\varepsilon_0,a+\varepsilon_0]\cap\mathscr{S}_\alpha(H_k,\delta\sigma)
=\emptyset$ for every $\delta$ satisfying $|\delta|<\delta_0(H,g,\sigma,a,\alpha)$ and every $k\geq k_0$.
Similarly, under the hypotheses of  Lemma~\ref{lem:NAS2}, if $\mathscr{H}\ni H_k \stackrel{C^\infty}{\longrightarrow}H$, then there exists a positive integer $k_0>0$ such that if $k\geq k_0$ and $|\sigma|_g<\delta_0(H,g,a,\alpha)$, then $[a-\varepsilon_0,a+\varepsilon_0]\cap \mathscr{S}_\alpha(H_k,\sigma)=\emptyset$.

}
\end{remark}

\subsection{Invariance of Floer homology for symplectic deformations}

\begin{theorem}\label{thm:Invariance}
Assume that $H\in \mathscr{H}_{\alpha}^{a,b}$. Then the following holds.
\begin{description}
  \item[(1)] There exists a constant $\delta_0=\delta_0(H,g,\sigma,a,b,\alpha)>0$ such that if $|\delta|<\delta_0$, then there is a continuation chain map
$$\widetilde{\Psi_{\omega_0}^{\omega_{\delta\sigma}}}:{\rm CF}^{(a,b)}_{\alpha}(H)\to {\rm CF}^{(a,b)}_{\alpha}(H,\delta
\sigma)$$
which induces an isomorphism
\begin{equation}
\Psi_{\omega_0}^{\omega_{\delta\sigma}}:{\rm HF}^{(a,b)}_{\alpha}(H)\to {\rm HF}^{(a,b)}_{\alpha}(H,\delta
\sigma).
\end{equation}
  \item[(2)] If $K\in \mathscr{H}_{\alpha}^{a,b}$ is another Hamiltonian function satisfying $$H(t,x)\leq K(t,x)\quad \forall\;(t,x)\in [0,1]\times D_RT^*M,$$

then for $|\delta|<\min\{\delta_0(H,g,\sigma,a,b,\alpha),
\delta_0(K,g,\sigma,a,b,\alpha)\}$ the following diagram commutes:

\begin{eqnarray}
\begin{CD}\label{diag:dc0}
{\rm HF}^{(a,b)}_{\alpha}(H) @>{\Psi_{KH}}>> {\rm HF}^{(a,b)}_{\alpha}(K)\\
@V{\Psi_{\omega_0}^{\omega_{\delta\sigma}}}VV  @VV{\Psi_{\omega_0}^{\omega_{\delta\sigma}}}V \\
 {\rm HF}^{(a,b)}_{\alpha}(H,\delta\sigma) @>{\Psi^{\delta\sigma}_{KH}}>> {\rm HF}^{(a,b)}_{\alpha}(K,\delta\sigma)
\end{CD}
\end{eqnarray}
\end{description}

\end{theorem}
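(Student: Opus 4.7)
The plan is to construct $\widetilde{\Psi_{\omega_0}^{\omega_{\delta\sigma}}}$ by a continuation argument along the one-parameter family of weakly exact symplectic forms $\omega_{r\delta\sigma}=\omega_0+r\pi^*(\delta\sigma)$, $r\in[0,1]$, in the spirit of Bae--Frauenfelder~\cite{BF}. Fix a smooth monotone cut-off $\beta:\R\to[0,1]$ with $\beta(s)=0$ for $s\le -1$ and $\beta(s)=1$ for $s\ge 1$, and for each $r$ pick, via the construction in \cite{Lu} recalled in Section~\ref{sec:2}, an $\omega_{r\delta\sigma}$-compatible almost complex structure $J_t^r$ that agrees with a suitable $\omega_{r\delta\sigma}$-compatible structure on a neighborhood of $\supp(H)$ and with a fixed geometrically bounded $J_{gb}$ outside a compact set. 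The candidate chain map is defined by counting isolated solutions $u:\R\times S^1\to T^*M$ of
\[
\partial_s u+J_t^{\beta(s)}(u)\bigl(\partial_t u-X_{H,\,\beta(s)\delta\sigma}(u)\bigr)=0,\qquad \lim_{s\to-\infty}u=x_-,\;\lim_{s\to+\infty}u=x_+,
\]
with $x_-\in\mathscr{P}_\alpha(H,0)$ and $x_+\in\mathscr{P}_\alpha(H,\delta\sigma)$. Applying Lemma~\ref{lem:NAS1} to both $a$ and $b$ furnishes a first choice of $\delta_0$ such that $\{a,b\}\cap\mathscr{S}_\alpha(H,r\delta\sigma)=\emptyset$ uniformly in $r\in[0,1]$ whenever $|\delta|<\delta_0$, keeping both ends of the complex well-defined throughout the deformation.

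The technical heart is an energy bound. A direct computation of $\frac{d}{ds}\mathscr{A}_{H,\beta(s)\delta\sigma}(u(s,\cdot))$, using that $\partial_r\mathscr{A}_{H,r\sigma}(x)=-\mathscr{A}_\sigma(\pi\circ x)$, gives
\[
E(u)=\mathscr{A}_H(x_-)-\mathscr{A}_{H,\delta\sigma}(x_+)-\delta\int_{\R}\beta'(s)\,\mathscr{A}_\sigma\bigl(\pi\circ u(s,\cdot)\bigr)\,ds.
\]
Lemma~\ref{lem:QII} bounds $|\mathscr{A}_\sigma(\pi\circ u(s,\cdot))|$ by $\epsilon_0 L(\pi\circ u(s,\cdot))^2+\epsilon_1$, and combining the Floer equation $\partial_t u=X_{H,\beta(s)\delta\sigma}(u)-J_t^{\beta(s)}\partial_s u$ with~(\ref{e:ub}) yields
\[
L(\pi\circ u(s,\cdot))^2 \le C_1+C_2\int_0^1\|\partial_s u(s,t)\|_{G_g}^2\,dt,
\]
for constants $C_1,C_2$ depending only on $H$, $g$, $\sigma$, and $|\beta'|_\infty$. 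Shrinking $\delta_0$ so that $2|\delta|\epsilon_0 C_2|\beta'|_\infty<1$ lets one absorb the $E(u)$-contribution into the left-hand side, producing a uniform energy bound depending only on the asymptotic actions. Together with the compact support of $H$ and the geometric boundedness of $(T^*M,\omega_{r\delta\sigma},J^r)$ at infinity, this gives the required $C^\infty_{\mathrm{loc}}$-compactness; weak asphericity of every $\omega_{r\delta\sigma}$ rules out sphere bubbling, and transversality is standard by \cite{FHS}. Filtration preservation follows by shrinking $\delta_0$ once more so that $|\mathscr{A}_{H,\delta\sigma}(x_+)-\mathscr{A}_H(x_-)|$ is smaller than the spectral gap $\varepsilon_0$ supplied by Lemma~\ref{lem:NAS1}; since neither $\mathscr{A}_H$ nor $\mathscr{A}_{H,\delta\sigma}$ has critical values in $[a-\varepsilon_0,a+\varepsilon_0]\cup[b-\varepsilon_0,b+\varepsilon_0]$, the map is indeed filtration-preserving.

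That $\Psi_{\omega_0}^{\omega_{\delta\sigma}}$ is an isomorphism is shown by running the same construction with $\beta$ replaced by the reverse cut-off $s\mapsto\beta(-s)$ to produce a candidate inverse, and invoking a homotopy-of-homotopies argument analogous to Lemma~\ref{lem:iso}: the continuous family of filtered complexes indexed by $r\in[0,1]$ yields continuation maps that compose to the identity, because the generators of the complex can be canonically identified across small changes of $r$ by the uniform gap estimate. For part~(2), the commutativity of~(\ref{diag:dc0}) is obtained from a two-parameter homotopy-of-homotopies on the unit square: one direction monotonically interpolates between $H$ and $K$ at fixed symplectic form, while the other interpolates $\omega_0\rightsquigarrow\omega_{\delta\sigma}$ at fixed Hamiltonian. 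Counting index-$(-1)$ solutions over the filled square provides a chain homotopy between the two corner-to-corner compositions. The main obstacle throughout is securing the energy bound \emph{uniformly} in the homotopy parameter(s); for part~(2) this requires reproducing the absorption argument with $\beta'$ replaced by partial derivatives of both cut-offs, and forces $\delta_0$ to be the minimum of all bounds produced along the way. Once this uniform bound is in place, the remainder is standard Floer-theoretic bookkeeping.
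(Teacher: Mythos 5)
Your proposal follows essentially the same route as the paper's proof: interpolate the twisted forms via a cut-off $\beta$, choose $\omega_{\beta(s)\delta\sigma}$-compatible almost complex structures close to $J_g$ via Lu's construction to keep geometric boundedness uniform, derive the energy identity with the extra term $-\delta\int\beta'(s)\,\mathscr{A}_\sigma(\pi\circ u(s,\cdot))\,ds$, bound it by the quadratic isoperimetric inequality (Lemma~\ref{lem:QII}) and the Floer equation, absorb for $|\delta|$ small, secure filtration and admissibility via Lemma~\ref{lem:NAS1}, and conclude (1) and (2) by a homotopy-of-homotopies argument; the paper organizes part (2) as two factorizations of a single triangle map $\Psi^{\delta 0}_{KH}$, which is equivalent to your two-parameter square. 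One phrasing to tighten: the isomorphism does not come from "canonically identifying generators across small changes of $r$" (the interior spectrum may bifurcate even when the endpoints $a,b$ stay clear), but from the round-trip homotopy being contractible to the constant homotopy, whose continuation map is the identity.
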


\begin{proof}
In what follows, we always assume that $|\sigma|_g\neq0$ (nothing needs to be proved if $\sigma=0$).
Consider $H^i\in\mathscr{H}_{\alpha}^{a,b}$, $i=0,1$, which satisfies
$$H^0(t,x)\leq H^1(t,x)\quad \forall\;(t,x)\in [0,1]\times D_RT^*M.$$
By Lemma~\ref{lem:NAS1}, there exist some constants $\varepsilon^i=\varepsilon^i(H^i,g,\sigma,a,b,\alpha)>0$ and $\hat{\delta}^i=\hat{\delta}(H^i,g,\sigma,a,b,\alpha)>0$, $
i=0,1$, such that for every $\delta^i\in(-\hat{\delta}^i,\hat{\delta}^i)$ it holds that
\begin{equation}
\begin{split}
\mathscr{S}_\alpha(H^i,\delta^i\sigma)\cap[a-2\varepsilon^i,a+2\varepsilon^i]=\emptyset
\quad \hbox{and}\quad \mathscr{S}_\alpha(H^i,\delta^i\sigma)\cap[b-2\varepsilon^i,b+2\varepsilon^i]=\emptyset.
\end{split}
\end{equation}
By a perturbation argument, we may assume that without loss of generality $H^0$ and $H^1$ satisfy the nondegeneracy condition (C) for $\omega_{\delta^0\sigma}$ and $\omega_{\delta^1\sigma}$ respectively. We will show that there is a Floer chain map from
${\rm CF}^{(a,b)}_{\alpha}(H^0,\delta^0\sigma)$ to ${\rm CF}^{(a,b)}_{\alpha} (H^1,\delta^1\sigma)$ which induces a homomorphism
$$\Psi^{\delta^1\delta^0}_{H^1H^0}:{\rm HF}^{(a,b)}_{\alpha}(H^0,\delta^0\sigma)\to {\rm HF}^{(a,b)}_{\alpha}(H^1,\delta^1\sigma)$$
whenever $|\delta^i|$ $(i=0,1)$ is small enough.

Let $\beta:\mathbb{R}\to [0,1]$ be a smooth cut-off function such that
$\beta=0$ for $s\leq0$, $\beta(s)=1$ for $s\geq 1$ and $0\leq\beta'(s)\leq1$.
Set $\delta^s=(1-\beta(s))\delta^0+\beta(s)\delta^1$ and
$H^s=(1-\beta(s))H^0+\beta(s)H^1$. Let $\omega^s=\omega_0+\delta^s\sigma$, and let $X^{\omega^s}_{H_t^s}$ be the Hamiltonian vector field such that $$dH_t^s=\iota_{X^{\omega^s}_{H_t^s}}\omega^s.$$
Then by~(\ref{e:ub}) we have
\begin{equation}\label{e:ubHv}
|X^{\omega^s}_{H_t^s}|_{G_g}\leq\frac{|X^{\omega_0}_{H_t^s}|_{G_g}}
{1-|\delta^s||\sigma|_g}
\leq2\min\big\{|X^{\omega_0}_{H_t^0}|_{G_g},|X^{\omega_0}_{H_t^1}|_{G_g}
\big\}
\end{equation}
for $|\delta^0|,|\delta^1|\leq1/(2|\sigma|_g)$ ($\sigma\neq 0$).
Let $s\to J^s\in\mathscr{J}(\omega_{\delta^s\sigma})\cap
B_{J_g}(\varepsilon_0|\delta^s\sigma|_{g})$ (recall that $\varepsilon_0$ is a constant given on page 10)
be a homotopy of one-periodic almost complex structures such that $J_t^s=J_t^-\in\mathcal{J}_{reg}(H^0,\delta^0\sigma)$ for $s\leq0$, and $J_t^s=J_t^+\in\mathcal{J}_{reg}(H^1,\delta^1\sigma)$ for $s\geq1$. Such a choice of $J^s$ is possible because $\delta^s$ is constant outside of $[0,1]$ and Proposition 4.1 in~\cite{Lu} can actually be extended to a parametric version for a family of twisted symplectic forms $\omega_{\sigma^s}$ with $s$ belonging to some compact interval since those almost complex structures constructed there are canonical with respect to a choice of metric. For every $(s,t)\in \mathbb{R}\times S^1$ and every $X,Y\in TT^*M$, we have
\begin{eqnarray}\label{e:utJs0}
\omega^s(X,J^s_tX)&=&\omega_0(X,J_gX)+\omega_0(X,(J^s_t-J_g)X)
+\delta^s\pi^*\sigma(X,J_t^sX)\nonumber\\
&\geq&\|X\|^2_{G_g}-|J^s_t-J_g|_{G_g}\|X\|^2_{G_g}-
|\delta^s||\sigma|_g|J^s_t|_{G_g}\|X\|^2_{G_g}\notag\\
&\geq&\big(1-\varepsilon_0|\delta^s||\sigma|_g-(1+
\varepsilon_0|\delta^s||\sigma|_g)|\delta^s||\sigma|_g\big)
\|X\|^2_{G_g}\notag\\
&\geq&\frac{1}{2}\|X\|^2_{G_g}
\end{eqnarray}
provided $|\delta^0|,|\delta^1|\leq (6\varepsilon_0+4)^{-1}|\sigma|_g^{-1}$, and it holds that
\begin{eqnarray}\label{e:utJs1}
|\omega^s(X,Y)|\leq\frac{5}{3}\|X\|_{G_g}\|Y\|_{G_g}.
\end{eqnarray}
Therefore $J^s$ is a $1$-periodic almost complex structure for which  $(T^*M,\omega^s)$ for the natural metric $G_g$ is geometrically bounded for every $s\in \mathbb{R}$ and every $t\in S^1$.

Given $x\in \mathscr{P}_{\alpha}(H^0,\delta^0\sigma)$ and $y\in \mathscr{P}_{\alpha}(H^1,\delta^1\sigma)$,
consider $u:\mathbb{R}\times S^1\to T^*M$ satisfying the following equation
\begin{equation}\label{e:sfe}
\partial_su+J^s_t(u)(\partial_tu-X^{\omega^s}_{H_t^s}(u))=0
\end{equation}
with the asymptotic boundary conditions
\begin{equation}\label{e:abc}
\lim\limits_{s\to-\infty}u(s,t)=x,\quad
\lim\limits_{s\to+\infty}u(s,t)=y\quad\hbox{and}\quad
\lim\limits_{s\to\pm\infty}\partial_su(s,t)=0.
\end{equation}
Here we emphasize that $\{J^s_t\}$ is also chosen such that solutions of (\ref{e:sfe}) and (\ref{e:abc}) are transverse (the associated linearized operators are surjective) by a perturbation argument; see~\cite{FHS}.
Now we can define a map
$$\widetilde{\Psi^{\delta^1\delta^0}_{H^1H^0}}:{\rm CF}^{(a,b)}_{\alpha}(H^0,\delta^0
\sigma)\to {\rm CF}^{(a,b)}_{\alpha}(H^1,\delta^1
\sigma)$$
given by
$$\widetilde{\Psi^{\delta^1\delta^0}_{H^1H^0}}(x)
=\sum\limits_{\substack{\mu(x)=\mu(y)}}\#_2
\mathcal{M}^\alpha(x,y,H^s,J^s,\omega^s)y,$$
where the space $\mathcal{M}^\alpha(x,y,H^s,J^s,\omega^s)$ consists of solutions of (\ref{e:sfe}) and (\ref{e:abc}), and $\#_2$ denotes the number of elements modulo two. Since $\omega_s$ is symplectically aspherical for every $s\in \mathbb{R}$, there is no bubbling.  Then in order to obtain the compactness of $\mathcal{M}^\alpha(x,y,H^s,J^s,\omega^s)$, we need certain energy estimations. Write
$$\mathscr{A}_{H,\delta(s)\sigma}(u(s,\cdot))=\int_{S^1}u(s,\cdot)^*
\lambda-\mathscr{A}_{\delta(s)\sigma}(q(s,\cdot))-\int^1_0H(t,u(s,t))dt,$$
where $q(s,t)$ is the projection of $u(s,t)\in T^*M$ to $M$ for every $(s,t)\in \mathbb{R}\times S^1$. Then we compute
\begin{eqnarray}\label{e:EIE}
\mathscr{A}_{H^1,\delta^1\sigma}(y)-\mathscr{A}_{H^0,\delta^0\sigma}(x)&=&
\int^\infty_{-\infty}\frac{d}{ds}\mathscr{A}_{H,\delta(s)\sigma}
(u(s,\cdot)){ds}\nonumber\\
&=&-\int^\infty_{-\infty}\int_0^1\omega_s(\partial_s u,J_t^s\partial_s u)ds dt-\int^\infty_{-\infty}\int_0^1\partial_sH^s_t(u(s,t))dsdt.\nonumber\\
&&-\int^\infty_{-\infty}(\delta^1-\delta^0)\beta^\prime(s)\mathscr{A}
_{\sigma}(q(s,\cdot))ds.
\end{eqnarray}
By Lemma~\ref{lem:QII} we estimate
\begin{eqnarray}\label{e:EIE0}
\big|\mathscr{A}
_{\sigma}(q(s,\cdot))\big|&\leq&\epsilon_0\bigg(\int^1_0
\|\partial_tq(s,t)\|_gdt\bigg)^2+\epsilon_1\nonumber\\
&\leq&\epsilon_0\int^1_0\|\partial_tq(s,t)\|^2_gdt+\epsilon_1\nonumber\\
&\leq&\epsilon_0\int^1_0\|\partial_tu(s,t)\|^2_{G_g}dt+\epsilon_1
\end{eqnarray}
for some positive constants $\epsilon_0=\epsilon_0(g,\sigma)$ and $\epsilon_1=\epsilon_1(g,\sigma,\alpha)$.  By (\ref{e:ubHv}) and (\ref{e:sfe}) we have
\begin{eqnarray}\label{e:EIE1}
\|\partial_tu\|^2_{G_g}&=&\|J^s_t(u)\partial_su+X_{H_t^s}^{\omega^s}(u)
\|^2_{G_g}\nonumber\\
&\leq&2\big(\|J^s_t(u)\partial_su\|^2_{G_g}+\|X_{H_t^s}^{\omega_s}(u)
\|^2_{G_g}\big)\nonumber\\
&\leq&2\big(|J^s_t|_{G_g}\|\partial_su\|^2_{G_g}+C_0\big)\nonumber\\
&\leq&2\bigg(\frac{4}{3}
\|\partial_su\|^2_{G_g}+C_0\bigg)
\end{eqnarray}
for $|\delta^0|,|\delta^1|\leq (6\varepsilon_0+4)^{-1}|\sigma|_g^{-1}$.
Here $C_0:=C_0(H^0,H^1,g)=4\min\{|X^{\omega_0}_{H^0}
|^2_{G_g},|X^{\omega_0}_{H^1}|^2_{G_g}\}$.
Plugging (\ref{e:utJs0}) into (\ref{e:EIE1}), we arrive at
\begin{eqnarray}\label{e:EIE2}
\|\partial_tu\|^2_{G_g}
&\leq&\frac{16}{3}\omega^s
(\partial_su,J^s_t\partial_su)+2C_0.
\end{eqnarray}
Combining (\ref{e:EIE0}) and (\ref{e:EIE2}) leads to
\begin{eqnarray}\label{e:EIE3}
\big|\mathscr{A}
_{\sigma}(q(s,\cdot))\big|\leq\frac{16\epsilon_0}{3}
\int^1_0\omega_s(\partial_su,J^s_t\partial_su)dt+2\epsilon_0C_0+\epsilon_1.
\end{eqnarray}
Hence,
\begin{eqnarray}\label{e:EIE4}
\bigg|\int^\infty_{-\infty}\beta^\prime(s)\mathscr{A}
_{\sigma}(q(s,\cdot))ds\bigg|&=&\bigg|\int^1_{0}\beta^\prime(s)\mathscr{A}
_{\sigma}(q(s,\cdot))ds\bigg|\notag\\
&\leq&C_1\int^\infty_{-\infty}\int^1_0\omega_s(\partial_su,J^s_t\partial_su)dsdt
+C_2,
\end{eqnarray}
where $C_1=16\epsilon_0/3$ and
$C_2=2\epsilon_0C_0+\epsilon_1$. Then by (\ref{e:EIE}) we obtain
\begin{eqnarray}\label{e:EIE5}
(1-\delta C_1)\int^\infty_{-\infty}\int^1_0\omega_s
(\partial_su,J^s_t\partial_su)dsdt&\leq&
\mathscr{A}_{H^0,\delta^0\sigma}(x)-\mathscr{A}_{H^1,\delta^1\sigma}(y)
+\delta C_2\nonumber\\
&&-\int^\infty_{-\infty}\int_0^1\partial_sH^s_t(u(s,t))dsdt,
\end{eqnarray}
with $\delta=(|\delta^0|+|\delta^1|)$.

Denote $\varepsilon:=\min\{\varepsilon^0,\varepsilon^1\}$, and set
$$\delta_0:=\delta_0(H^0,H^1,g,\sigma,a,b,\alpha):=\min\bigg\{\hat{\delta}^0, \hat{\delta}^1,\frac{1}{(6\varepsilon_0+4)|\sigma|_g},
\frac{1}{2C_1},\frac{\varepsilon}{2C_2}\bigg\}.$$
For brevity, hereafter we drop the explicit dependence on $a,b$ in the notation $\delta_0$. 
Then for $|\delta^i|<\delta_0$ the Floer map from ${\rm CF}_{\alpha}(H^0,\delta^0\sigma)$ to ${\rm CF}_{\alpha}(H^1,\delta^1\sigma)$ defined by the solutions of (\ref{e:sfe}) preserves the subcomplexes ${\rm CF}^a_{\alpha}$ and ${\rm CF}^b_{\alpha}$ (since $\partial_sH^s=\dot{\beta}(s)(H^1-H^0)\geq0$).
Therefore, the solutions of (\ref{e:sfe}) give rise to the continuation map $\widetilde{\Psi^{\delta^1\delta^0}_{H^1H^0}}$. By a standard gluing argument in Floer homology theory, $\widetilde{\Psi^{\delta^1\delta^0}_{H^1H^0}}$ commutes with the boundary operators. Hence $\widetilde{\Psi^{\delta^1\delta^0}_{H^1H^0}}$ is a chain map which induces a homomorphism
$$\Psi^{\delta^1\delta^0}_{H^1H^0}:{\rm HF}^{(a,b)}_{\alpha}(H^0,\delta^0\sigma)\to {\rm HF}^{(a,b)}_{\alpha}(H^1,\delta^1\sigma).$$
Suppose that
$$\Psi^{\delta^2\delta^1}_{H^2H^1}:{\rm HF}^{(a,b)}_{\alpha}(H^1,\delta^1\sigma)\to {\rm HF}^{(a,b)}_{\alpha}(H^2,\delta^2\sigma)$$
is another homomorphism defined as above if $H^1\leq H^2$ and
$|\delta^1|,|\delta^2|<\delta_0(H^1,H^2,g,$ $\sigma,\alpha)$. By a homotopy-of-homotopies argument, we have the following commutative diagram
\begin{equation}\label{e:ch'}
\xymatrix{{\rm HF}^{(a,b)}_{\alpha}(H^0,\delta^0\sigma)
\ar[rr]^{\Psi^{\delta^2\delta^0}_{H^2H^0}}\ar[dr]_{\Psi^{\delta^1\delta^0}_{H^1H^0}}& & {\rm HF}^{(a,b)}_{\alpha}(H^2,\delta^2\sigma)\\ & {\rm HF}^{(a,b)}_{\alpha}(H^1,\delta^1\sigma)\ar[ur]_{\Psi^{\delta^2\delta^1}_{H^2H^1}} & }
\end{equation}
for $|\delta^0|,|\delta^1|,|\delta^2|<\min\{\delta_0(H^0,H^1,g,\sigma,\alpha)
,\delta_0(H^1,H^2,g,\sigma,\alpha),\delta_0(H^0,H^2,$ $g,\sigma,\alpha)\}$.

If $H^s\equiv H\in\mathscr{H}_{\alpha}^{a,b}$, then for $|\delta^1|,|\delta^2|<\delta_0(H,H,g,\sigma,\alpha)$
 we can define the map
$$\Psi^{\delta^0\delta^1}_{HH}:{\rm HF}^{(a,b)}_{\alpha}(H,\delta^1\sigma)\to {\rm HF}^{(a,b)}_{\alpha}(H,\delta^0\sigma).$$
Since $\Psi^{\delta\delta}_{HH}$ is an isomorphism for every $\delta\in\mathbb{R}$ and every
$H\in\mathscr{H}_{\delta\sigma;\alpha}^{a,b}$, we deduce from (\ref{e:ch'}) with $\delta_0=\delta_2$ that the homomorphism
\begin{equation}\label{e:iso}
\Psi^{\delta^1\delta^0}_{HH}:{\rm HF}^{(a,b)}_{\alpha}(H,\delta^0\sigma)\to {\rm HF}^{(a,b)}_{\alpha}(H,\delta^1\sigma)
\end{equation}
is an isomorphism with inverse $\Psi^{\delta^0\delta^1}_{HH}$ for $|\delta^0|,|\delta^1|<\delta_0(H,H,g,\sigma,\alpha)$.

Now we are in a position to prove Theorem~\ref{thm:Invariance}. Denote $$\delta_0(H,g,\sigma,\alpha):=\delta_0(H,H,g,\sigma,\alpha).$$
Let $\delta^0=0$ and $\delta^1=\delta$ with $|\delta|<\delta_0(H,g,\sigma,\alpha)$ for every $H\in \mathscr{H}_{\alpha}^{a,b}$, and denote $$\Psi_{\omega_0}^{\omega_{\delta\sigma}}:=\Psi^{\delta^1\delta^0}_{HH}
:{\rm HF}^{(a,b)}_{\alpha}(H)\to {\rm HF}^{(a,b)}_{\alpha}(H,\delta\sigma).$$
Then statement~(1) follows from the isomorphism (\ref{e:iso}) immediately. Notice that
$$\min\{\delta_0(H,g,\sigma,\alpha),\;\delta_0(K,g,\sigma,\alpha)\}
\leq\delta_0(H,K,g,\sigma,\alpha).$$
For $|\delta|<\min\{\delta_0(H,g,\sigma,\alpha),
\delta_0(K,g,\sigma,\alpha)\}$ we deduce from (\ref{e:ch'}) that the homomorphism
$$\Psi^{\delta 0}_{KH}:{\rm HF}^{(a,b)}_{\alpha}(H)\to {\rm HF}^{(a,b)}_{\alpha}(K,\delta\sigma)$$
satisfies $\Psi^{\delta 0}_{KH}=\Psi^{\delta\sigma}_{KH}
\circ\Psi_{\omega_0}^{\omega_{\delta\sigma}}$ with $(H^0,\delta^0)=(H,0), (H^1,\delta^1)=(H,\delta)$ and $(H^2,\delta^2)=(K,\delta)$, and 
$\Psi^{\delta 0}_{KH}=\Psi_{\omega_0}
^{\omega_{\delta\sigma}}\circ\Psi_{KH}$ with $(H^0,\delta^0)=(H,0), (H^1,\delta^1)=(K,0)$ and $(H^2,\delta^2)=(K,\delta)$. So $\Psi^{\delta\sigma}_{KH}
\circ\Psi_{\omega_0}^{\omega_{\delta\sigma}}=\Psi_{\omega_0}
^{\omega_{\delta\sigma}}\circ\Psi_{KH}$. The proof of statement~(2) is completed.
\end{proof}

\begin{remark}\label{rem:inv}
{\rm
The quadratic isoperimetric inequality in Lemma~\ref{lem:QII} plays an essential role in the proof of the above theorem. A careful inspection of the proof of Theorem~\ref{thm:Invariance} shows that one could obtain the desired various estimations whenever the term $\mathscr{A}_{\sigma}$ associated to $\sigma \in \mathcal{P}(M)$ is well controlled. In fact, when the underlying closed manifold $M$ admits a metric $g$ of negative curvature, Theorem~\ref{thm:Invariance} can be upgraded to the following theorem.

}
\end{remark}

\begin{theorem}\label{thm:Inv}
Let $(M,g)$ be a closed Riemannian manifold of negative curvature, and let $\alpha\in[S^1,M]$ be a free homotopy class.
Assume that $H\in \mathscr{H}_{\alpha}^{a,b}$ and that $\sigma$ is any closed $2$-form on $M$. Then there exists a constant $\delta_0=\delta_0(H,g,a,b,\alpha)>0$ such that if $|\sigma|_g<\delta_0$, then there is a continuation chain map
$$\widetilde{\Psi_{\omega_0}^{\omega_{\sigma}}}:{\rm CF}^{(a,b)}_{\alpha}(H)\to {\rm CF}^{(a,b)}_{\alpha}(H,\sigma)$$
which induces an isomorphism
\begin{equation}
\Psi_{\omega_0}^{\omega_{\sigma}}:{\rm HF}^{(a,b)}_{\alpha}(H)\to {\rm HF}^{(a,b)}_{\alpha}(H,\sigma).
\end{equation}
Moreover, if $K\in \mathscr{H}_{\alpha}^{a,b}$ is another Hamiltonian function satisfying $$H(t,x)\leq K(t,x)\quad \forall\;(t,x)\in [0,1]\times D_RT^*M,$$  then for $|\sigma|_g<\min\{\delta_0(H,g,a,b,\alpha),
\delta_0(K,g,a,b,\alpha)\}$ the following diagram commutes:

\begin{eqnarray}
\begin{CD}\label{diag:dc1}
{\rm HF}^{(a,b)}_{\alpha}(H) @>{\Psi_{KH}}>> {\rm HF}^{(a,b)}_{\alpha}(K)\\
@V{\Psi_{\omega_0}^{\omega_{\sigma}}}VV  @VV{\Psi_{\omega_0}^{\omega_{\sigma}}}V \\
 {\rm HF}^{(a,b)}_{\alpha}(H,\sigma) @>{\Psi^\sigma_{KH}}>> {\rm HF}^{(a,b)}_{\alpha}(K,\sigma)
\end{CD}
\end{eqnarray}

\end{theorem}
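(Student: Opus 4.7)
The plan is to mimic the proof of Theorem~\ref{thm:Invariance} with the scalar parameter $\delta$ replaced throughout by the norm $|\sigma|_g$, exploiting the fact that on a negatively curved manifold the relevant constants degenerate favorably as $|\sigma|_g \to 0$. First, by Remark~\ref{rem:NC} every class $\alpha$ is automatically $\sigma$-atoroidal, and by Example~\ref{ex:3exa}(1) combined with Remark~\ref{rem:LGC} every closed $2$-form $\sigma$ on $M$ lies in $\mathcal{P}(M)$ with $\epsilon(\sigma)\to 0$ as $|\sigma|_g\to 0$; in particular $\mathscr{A}_{H,\sigma}$ is single-valued. I would then apply Lemma~\ref{lem:NAS2} at both endpoints $a,b$ to produce constants $\varepsilon^{*}=\varepsilon^{*}(H,g,a,b,\alpha)>0$ and $\hat\delta=\hat\delta(H,g,a,b,\alpha)>0$ such that $\mathscr{S}_\alpha(H,\sigma)\cap\bigl([a-2\varepsilon^{*},a+2\varepsilon^{*}]\cup[b-2\varepsilon^{*},b+2\varepsilon^{*}]\bigr)=\emptyset$ whenever $|\sigma|_g<\hat\delta$, and perturb $H$ slightly to secure the nondegeneracy condition (C) for $\omega_\sigma$.

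Next, I would build the continuation map directly from $\omega_0$ to $\omega_\sigma$ by keeping $H$ fixed and linearly interpolating the symplectic form. Choose a smooth cutoff $\beta:\mathbb{R}\to[0,1]$ with $\beta\equiv 0$ for $s\leq 0$, $\beta\equiv 1$ for $s\geq 1$, set $\omega^s=\omega_0+\beta(s)\pi^{*}\sigma$, and select a family $J^s\in\mathscr{J}(\omega^s)\cap B_{J_g}(\varepsilon_0\beta(s)|\sigma|_g)$ using the parametric version of \cite[Proposition~4.1]{Lu}. The computations of (\ref{e:utJs0})--(\ref{e:utJs1}) then apply verbatim to show that $(T^{*}M,\omega^s,J^s,G_g)$ is uniformly geometrically bounded once $|\sigma|_g$ is small. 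Because $\partial_s H\equiv 0$ in this setup, the energy identity reduces to
\[
\mathscr{A}_{H,\sigma}(y)-\mathscr{A}_H(x)=-\int_{-\infty}^\infty\!\!\int_0^1\omega^s(\partial_s u,J^s\partial_s u)\,ds\,dt-\int_{-\infty}^\infty\beta'(s)\,\mathscr{A}_\sigma(q(s,\cdot))\,ds,
\]
and the only nontrivial term to control is the last one.

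The crux is precisely this control. By Lemma~\ref{lem:QII} combined with Remark~\ref{rem:lgc}, one has $|\mathscr{A}_\sigma(q(s,\cdot))|\leq\epsilon_0(g,\sigma)\int_0^1\|\partial_t q\|_g^2\,dt+\epsilon_1(g,\sigma,\alpha)$ with $\epsilon_0,\epsilon_1\to 0$ as $|\sigma|_g\to 0$. Substituting the pointwise bound $\|\partial_t u\|_{G_g}^2\leq\tfrac{16}{3}\omega^s(\partial_s u,J^s\partial_s u)+2C_0(H,g)$ derived exactly as in (\ref{e:EIE1})--(\ref{e:EIE2}) and absorbing the resulting factor $C_1\epsilon_0(g,\sigma)$ into the left-hand side gives an estimate of the form
\[
\tfrac{1}{2}\,E(u)\leq\mathscr{A}_H(x)-\mathscr{A}_{H,\sigma}(y)+O\bigl(\epsilon_0(g,\sigma)+\epsilon_1(g,\sigma,\alpha)\bigr)
\]
as soon as $|\sigma|_g$ is small enough that $C_1\epsilon_0(g,\sigma)<1/2$. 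This yields the required $C^0$- and energy-bounds and hence compactness of the moduli spaces $\mathcal{M}^\alpha(x,y,H,J^s,\omega^s)$, defines the chain map $\widetilde{\Psi_{\omega_0}^{\omega_\sigma}}$, and (after further shrinking $|\sigma|_g$ so that the right-hand error is less than $\varepsilon^{*}$) ensures that it preserves the action window $(a,b)$. This fixes the final threshold $\delta_0=\delta_0(H,g,a,b,\alpha)$.

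The remaining claims---that $\Psi_{\omega_0}^{\omega_\sigma}$ is an isomorphism and that the square (\ref{diag:dc1}) commutes---follow from a homotopy-of-homotopies argument identical to the one in the proof of Theorem~\ref{thm:Invariance}: gluing the direct continuation with its reverse produces the constant continuation for $\omega_0$, which is the identity by Lemma~\ref{lem:iso}, and an analogous two-parameter deformation reduces both compositions $\Psi^\sigma_{KH}\circ\Psi_{\omega_0}^{\omega_\sigma}$ and $\Psi_{\omega_0}^{\omega_\sigma}\circ\Psi_{KH}$ to the single continuation map associated with the joint homotopy $(H,\omega_0)\rightsquigarrow(K,\omega_\sigma)$. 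The hard part throughout is keeping every threshold uniform in $\sigma$; this is made possible only by Remarks~\ref{rem:LGC} and~\ref{rem:lgc}, which encode the negative curvature hypothesis and guarantee that the quadratic isoperimetric coefficients can be absorbed into the positive energy term once $|\sigma|_g$ is sufficiently small---without this degeneration the threshold would unavoidably depend on $\sigma$, as in Theorem~\ref{thm:Invariance}.
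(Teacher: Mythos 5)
Your proposal is correct and follows essentially the same route as the paper: the paper's proof of Theorem~\ref{thm:Inv} simply observes that the argument of Theorem~\ref{thm:Invariance} carries over verbatim once Lemma~\ref{lem:NAS1} is replaced by Lemma~\ref{lem:NAS2} and one invokes (via Remark~\ref{rem:lgc}) that the isoperimetric constants $\epsilon_0,\epsilon_1$ degenerate as $|\sigma|_g\to 0$, which is exactly the mechanism you identify for absorbing the $\mathscr{A}_\sigma$ term and for making the thresholds depend only on $(H,g,a,b,\alpha)$ rather than on $\sigma$. The construction of the continuation map from $\omega_0$ to $\omega_\sigma$ with $H$ fixed, the choice of $J^s$ via the parametric version of \cite[Proposition~4.1]{Lu}, and the homotopy-of-homotopies argument for the isomorphism and the commutative square all match the paper's treatment.
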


\begin{proof}
Firstly, Remark~\ref{rem:NC} implies that under the hypotheses of Theorem~\ref{thm:Inv} the filtered Floer homology ${\rm HF}^{(a,b)}_{\alpha}(H,\sigma)$ can be defined as in Subsection~\ref{subsec:FFH}. As Remark~\ref{rem:inv} points out, the proof of Theorem~\ref{thm:Invariance} for $\sigma$-atoroidal class $\alpha$ goes through verbatim with only a minor modification. Here we just mention that in order to obtain the corresponding energy estimations we need to replace Lemma~\ref{lem:NAS1} by Lemma~\ref{lem:NAS2} and use the fact that the constants $\epsilon_0$ and $\epsilon_1$ in Lemma~\ref{lem:QII} converge to zero as $|\sigma|_g\to 0$ (see Remark~\ref{rem:lgc}).

\end{proof}

\section{Computations of Floer homology}\label{sec:4}
\setcounter{equation}{0}
In this section we will closely follow the paper by Weber~\cite{We0} to construct two sequences of Hamiltonian functions compactly supported in $D_RT^*M$ and compute their Floer homologies for $T^*M$ endowed  with the canonical symplectic form $\omega_0=-d\lambda$.

\subsection{Radial Hamiltonians}

 Let $H^f:T^*M\to \mathbb{R}$ be an autonomous function of the form
$$H^f(q,p)=f(\|p\|_g)\quad \forall (q,p)\in T^*M,$$
where $f:\mathbb{R}\to\mathbb{R}$ is a smooth function such that $f(r)=f(-r)$. Then the set of the critical points of $\mathscr{A}_{H^f}$ is given by
\begin{eqnarray}\notag
\mathscr{P}_{\alpha}(H^f):=&\big\{& z=(q,p)\in C^\infty(S^1, T^*M)\big|
q(t)\;\hbox{is a geodesic in the class}\;\alpha,\;\notag\\
&&l:=\|\dot{q}\|_g, \;p(t)=\pm\frac{r}{l}\dot{q}(t),\;\hbox{where}\;r>0\; \hbox{satisfies}\; f^\prime(r)=\pm l\big\}.\notag
\end{eqnarray}
Moreover, for each $z\in \mathscr{P}_{\alpha}(H^f)$ it holds that $\mathscr{A}_{H^f}(z)=f^\prime(r)r-f(r)$, that is, the value of the action functional $\mathscr{A}_{H^f}$ at $z$ is equal to the minus $y$-intercept of the tangential line of the graph $y=f(x)$ at $x=r$.

\subsection{The action functional on free loop space}
The action functional $\mathscr{E}$ on $\mathcal{L}_\alpha M$ is defined by
$$\mathscr{E}(q)=\frac{1}{2}\int^1_0\|\dot{x}(t)\|_g^2dt.$$
It is not hard to check that a loop $q\in\mathcal{L}_\alpha M$ is a critical point of $\mathscr{E}$ if and only if $q$ is a $1$-periodic geodesic representing $\alpha$. Given $a\in\mathbb{R}$, denote
$$\mathcal{L}_\alpha^aM:=\{q\in\mathcal{L}_\alpha M\big|\mathscr{E}(q)\leq a\}.$$
For $a\leq b$ the natural inclusion
$$\iota_a^b:\mathcal{L}_\alpha^aM\hookrightarrow \mathcal{L}_\alpha^bM$$
induces the homomorphism
$$[\iota_a^b]:{\rm H}_*(\mathcal{L}_\alpha^aM)\to {\rm H}_*(\mathcal{L}_\alpha^bM).$$
Here ${\rm H}_*(\mathcal{L}_\alpha^aM)$ denotes the singular homology with $\mathbb{Z}_2$-coefficients of the sublevel set $\mathcal{L}_\alpha^aM$.
\begin{remark}\label{rem:nontriviality}
{\rm
The homomorphism $[\iota_a^b]$ is nonzero whenever $l_\alpha\leq a\leq b$.}
\end{remark}

\subsection{Two families of profile functions}\label{subsec:pf}

Fix $c>0$ and pick $a\in(0,c]$ with $a/R\notin\Lambda_\alpha$.
Since the marked length spectrum $\Lambda_\alpha\subseteq\mathbb{R}$ is a closed and nowhere dense subset, there is a dense subset $\Delta$ of $(0,a/c)$ such that for every $\eta\in \Delta$ it holds that
$$\quad \nu_\eta:=\frac{1}{R}\bigg[\frac{a}{\eta}-(c-a)\bigg]\in (a/R,\infty)\setminus\Lambda_\alpha.$$
Fix $\eta\in \Delta$,  using the conventions $\sup\emptyset=0$ and $\inf\emptyset=\infty$, we define
\begin{equation}
\begin{array}{ll}
l_0:=\inf(\Lambda_\alpha\cap(a/ R,\infty)),\quad l_-=l_-(\eta):=\sup((0,\nu_\eta)\cap\Lambda_\alpha),\notag\\
l_+=l_+(\eta):=\inf((\nu_\eta,\infty)\cap\Lambda_\alpha),\quad l_1=l_1(\eta):=\frac{\nu_\eta+l_-}{2}.
\end{array}
\end{equation}
Clearly, we have
$$(a/R,l_0)\cap\Lambda_\alpha=\emptyset,\quad
(l_-,l_+)\cap\Lambda_\alpha=\emptyset\quad \hbox{and}\quad 0\leq l_-<l_1<\nu_\eta<l_+\leq\infty.$$
Denote
\begin{equation}
\begin{array}{lrc}
r_{k1}:=\frac{(k-1)R}{k}+\frac{3}{16k}\big(R-\frac{a}{l_0}\big),\quad
r_{k2}:=R-\frac{3}{16k}\big(R-\frac{a}{l_0}\big),\notag\\
\lambda_k(x):=ak\frac{x-R+(3/(16k))(R-a/l_0)}{R-(3/8)(R-a/l_0)},\qquad k\in\mathbb{N}.
\end{array}
\end{equation}
Here let us remark that $\nu_\eta,l_+(\eta),l_1(\eta)\to+\infty$ as $\eta\to0$, and $r_{k1},r_{k2}\to R$ as $k\to\infty$.

\begin{figure}[H]
  \centering
  \includegraphics[scale=0.5]{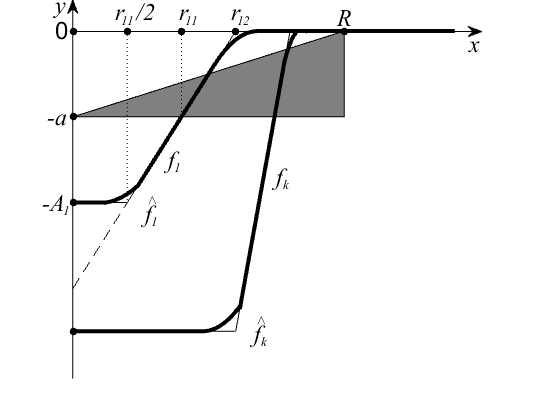}
  \caption{Sequence A of profile functions}\label{fig:1}
\end{figure}


\noindent\textbf{Sequence A of profile functions}: Choose a sequence of smooth functions $f_k$ (see Figure 1) by smoothing out a sequence of piecewise linear functions $\hat{f}_k$, which are given by
\begin{equation}\notag
\hat{f}_k(x):=\left\{
             \begin{array}{ll}
            \lambda_k(r_{k1}/2)&\hbox{if}\;x\in[0,r_{k1}/2),  \\
             \lambda_k(x) & \hbox{if}\;x\in[r_{k1}/2,r_{k2}),\\
             0 & \hbox{if}\;x\in[r_{k2},+\infty).
             \end{array}
\right.
\end{equation}
Each $f_k$ is required to coincide with $\hat{f}_k$ away from sufficiently small neighbourhoods of $r_{k1}/2$ and $r_{k2}$. In particular,  $\hbox{graph}f_k$ equals $\hbox{graph}\hat{f}_k$ in the region that lies below the line $x\to ax/R-a$ and above the line $x\to -a$ (grey region in Figure 1). Besides, we require that $f_k^\prime\geq 0$ everywhere, $f_k^{\prime\prime}\geq 0$ near $r_{k1}/2$, $f_k^{\prime\prime}\leq 0$ near $r_{k2}$, and $f_k^{\prime\prime}=0$ elsewhere. Since $a/R<a/r_{k2}<l_0$, the slope of the unique tangent of the graph of $f_k$ through the point $(0,-a)$ lies in the interval $(a/R,l_0)$. It follows that $a\notin \mathscr{S}_{\alpha}(H^{f_k})$.

\begin{figure}[H]
  \centering
  \includegraphics[scale=0.6]{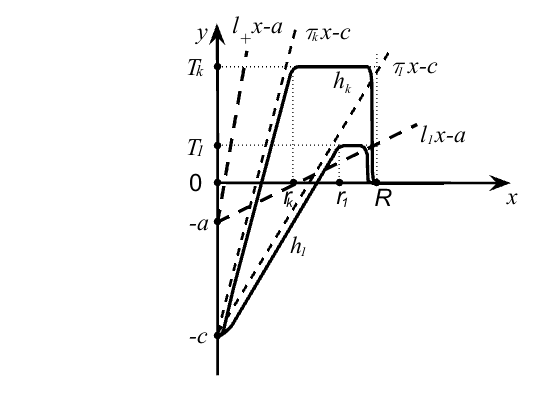}
  \caption{Sequence B of profile functions}\label{fig:2}
\end{figure}


\noindent \textbf{Sequence B of profile functions}: Let $\eta_k\in\Delta$ be a sequence of numbers such that $\eta_k\to 0$ as $k\to\infty$. Denote $$T_k:=\max\{0,l_1(\eta_k)R-a\},\quad \tau_k:=\frac{a}{\eta_kR}
\quad\hbox{and}\quad r_k:=\frac{\eta_kR(T_k+c)}{a}.$$
Note that $T_k\to+\infty$ and $\tau_k\to+\infty$ as $k\to\infty$, and that $0\leq r_k\leq R$. Consider the piecewise linear curve $\Gamma$ in $\mathbb{R}^2$:

\begin{equation}\notag
\forall\; \Gamma\ni(x,y)=\left\{
             \begin{array}{ll}
            \big(x,\tau_kx-c\big) & \hbox{if}\;x\in[0,r_k], \\
             (x,T_k) & \hbox{if}\;x\in[r_k,R],\\
             (R,y) & \hbox{if}\;y\in[0,T_k].
             \end{array}
\right.
\end{equation}
Smoothing out this piecewise linear curve near its corners we obtain a sequence of smooth functions $h_k$ (see Figure 2). Here every $h_k$ is also required to satisfy $h_k^{\prime\prime}\geq 0$ near the point $(0,-c)$ and $h_k^\prime(0)=h_k^\prime(1)=0$. We claim that $a\notin \mathscr{S}_{\alpha}(H^{h_k})$. In fact, the action of $\mathscr{A}_{h_k}$ at some one-periodic orbit is positive if and only if the $y$-intercept of the tangential line of $\hbox{graph}~h_k$ at the  corresponding point is negative. This happens in two clusters. The slope of the tangent of any point in one of those clusters passing through the point $(0,-a)$ lies in the interval $(l_-,l_+)$. The other cluster is located near the point $(0,-c)$, at which the $y$-intercepts of the tangential lines of $\hbox{graph}f_k$ are less than $-c$. So $(l_-,l_+)\cap\Lambda_\alpha=\emptyset$ implies that $a\notin \mathscr{S}_{\alpha}(H^{h_k})$.

\begin{proposition}\label{prop:pf}
Fix $0<a\leq c$ with $a/R\notin\Lambda_\alpha$, and choose $\eta_k\in \Delta$ satisfying $\eta_k\to 0$ as $k\to 0$. Let $\{f_k\}_{k\in\mathbb{N}}$ and $\{h_k\}_{k\in\mathbb{N}}$ be two sequences of those functions constructed above. Choose $k\in\mathbb{N}$ sufficiently large so that
$f_k(0)\leq-c$ and $f_k\leq h_k$. Set $\mu_k:=\nu_{\eta_k}$. Then there exist natural isomorphisms
\begin{equation}\label{e:Ipf1}
\Theta_{f_k}:{\rm HF}^{(a,+\infty)}_{\alpha}(H^{f_k})\to
{\rm H}_*(\mathcal{L}_\alpha^{a^2/(2R^2)}M)
\end{equation}
and
\begin{equation}\label{e:Ipf2}
\Theta_{h_k}:{\rm HF}^{(a,+\infty)}_{\alpha}(H^{h_k})\to
{\rm H}_*(\mathcal{L}_\alpha^{\mu_k^2/2}M)
\end{equation}
such that the following diagram commutes:
\begin{eqnarray}
\begin{CD}\label{diag:dc3}
{\rm HF}^{(a,+\infty)}_{\alpha}(H^{f_k}) @>{\Psi_{h_k f_k}}>> {\rm HF}^{(a,+\infty)}_{\alpha}(H^{h_k})\\
@V{\Theta_{f_k}}V\simeq V  @V\simeq V{\Theta_{h_k}}V \\
{\rm H}_*(\mathcal{L}_\alpha^{a^2/(2R^2)}M) @>{[\iota^{\mu_k^2/2}_{a^2/(2R^2)}}]>> {\rm H}_*(\mathcal{L}_\alpha^{\mu_k^2/2}M)
\end{CD}
\end{eqnarray}

\end{proposition}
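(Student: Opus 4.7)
The plan is to reduce both Floer homologies to Weber's identification from~\cite{We0} of the Floer homology of radial Hamiltonians on $(T^*M,\omega_0)$ with the singular homology of sublevel sets of the energy functional $\mathscr{E}$ on $\mathcal{L}_\alpha M$, and then read off the diagram (\ref{diag:dc3}) from the naturality of that identification.

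First I would enlarge $H^{f_k}$ and $H^{h_k}$ to cone-type Hamiltonians $\widetilde{H}^{f_k},\widetilde{H}^{h_k}\in\mathscr{K}_{R;\alpha}^{a,+\infty}$ by grafting linear extensions outside a compact set with slopes $\tau^f_k,\tau^h_k\notin\Lambda_\alpha$ chosen just above $\lambda_k'$ and $\tau_k=a/(\eta_k R)$, respectively. A convex interpolation between $H^{f_k}$ and $\widetilde{H}^{f_k}$ (resp.\ between $H^{h_k}$ and $\widetilde{H}^{h_k}$) can be arranged to stay inside $\mathscr{K}_{R;\alpha}^{a,+\infty}$, since the instantaneous slopes are kept outside $\Lambda_\alpha$ and the action spectrum thus avoids $a$ throughout. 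The extension of Lemma~\ref{lem:iso} to $\mathscr{K}_{R;\alpha}^{a,b}$ discussed in Remark~\ref{rem:HFfcsf} then yields
$$\HF_\alpha^{(a,+\infty)}(H^{f_k})\cong\HF_\alpha^{(a,+\infty)}(\widetilde{H}^{f_k}),\qquad\HF_\alpha^{(a,+\infty)}(H^{h_k})\cong\HF_\alpha^{(a,+\infty)}(\widetilde{H}^{h_k}).$$

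Next I would invoke Weber's main computation in~\cite{We0}: for a cone-type radial Hamiltonian $H^\phi$ there is a natural isomorphism $\Theta_\phi:\HF_\alpha^{(a,+\infty)}(H^\phi)\xrightarrow{\cong}{\rm H}_*(\mathcal{L}_\alpha^{\tau_*^2/2}M)$, where $\tau_*$ is the slope of the tangent line from $(0,-a)$ meeting $\mathrm{graph}(\phi)$ at its outermost (right-most) tangency. For $\phi=f_k$ this tangent touches the outer concave arc near $r=r_{k2}$, where $f_k$ is small; a short geometric computation shows $\tau_*^{f_k}\to a/R$ as $k\to\infty$, and since $(a/R,l_0)\cap\Lambda_\alpha=\emptyset$, for all $k$ large enough $\tau_*^{f_k}$ lies in this spectral gap and the corresponding sublevel is $\mathcal{L}_\alpha^{a^2/(2R^2)}M$. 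For $\phi=h_k$ the engineered smoothing makes the right-most tangency occur precisely at the transition where the middle slope $\tau_k$ drops to zero, so $\tau_*^{h_k}=\nu_{\eta_k}=\mu_k$; the condition $\eta_k\in\Delta$ ensures $\mu_k\notin\Lambda_\alpha$, giving $\mathcal{L}_\alpha^{\mu_k^2/2}M$.

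For the commutative square, I would exploit the naturality of Weber's isomorphism with respect to monotone homotopies inside $\mathscr{K}_{R;\alpha}^{a,+\infty}$: the linear homotopy $(1-s)\widetilde{H}^{f_k}+s\widetilde{H}^{h_k}$, accompanied by a monotone interpolation of cone slopes whose distinguished tangency $\tau_*(s)$ sweeps from $a/R$ up to $\mu_k$, induces on the loop space side precisely the inclusion map $[\iota_{a^2/(2R^2)}^{\mu_k^2/2}]$. The principal technical obstacle will be verifying that along this whole interpolation no periodic orbit's action crosses the level $a$, so that the $(a,+\infty)$-filtration is preserved; this is exactly where the carefully built-in emptiness conditions $(a/R,l_0)\cap\Lambda_\alpha=\emptyset$ and $(l_-,l_+)\cap\Lambda_\alpha=\emptyset$ are used, since they provide enough room to keep the interpolating slopes and tangency values away from $\Lambda_\alpha$ and hence preserve the filtration throughout.
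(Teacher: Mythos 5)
Your overall strategy coincides with the paper's: deform the radial Hamiltonians by monotone homotopies inside $\mathscr{K}_{R;\alpha}^{a,+\infty}$ to simpler (convex) radial functions, then invoke Weber's identification~\cite[Theorem~2.9]{We0} and its naturality to get the commutative square. However, there is a real gap in how you apply Weber's theorem. Weber's result (stated in the paper as Theorem~\ref{thm:crH}) identifies the Floer homology filtered from \emph{below}, ${\rm HF}^{(-\infty,\,b_{f,\tau})}_\alpha(H^f)$, with ${\rm H}_*(\mathcal{L}_\alpha^{\tau^2/2}M)$, for a \emph{convex} radial $f$; it does not directly compute the half-infinite filtration ${\rm HF}^{(a,+\infty)}_\alpha$. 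The paper bridges this via the chain of isomorphisms
\begin{equation*}
{\rm HF}^{(a,+\infty)}_{\alpha}(H^{f_k^{\prime\prime}})\xrightarrow{\;\simeq\;}{\rm HF}^{(-\infty,+\infty)}_{\alpha}(H^{f_k^{\prime\prime}})\xrightarrow{\;\simeq\;}{\rm HF}^{(-\infty,B)}_{\alpha}(H^{f_k^{\prime\prime}}),
\end{equation*}
which is available only \emph{after} the monotone deformation to the convex $f_k^{\prime\prime}$, precisely because then every $1$-periodic orbit in class $\alpha$ has action strictly above $a$, so the $(a,+\infty)$-window sees the full complex. This step is where the argument actually changes filtration type, and it is silently absorbed in your statement of a direct isomorphism ${\rm HF}^{(a,+\infty)}_\alpha(H^\phi)\cong{\rm H}_*(\mathcal{L}_\alpha^{\tau_*^2/2}M)$. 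You would need to make this transition explicit, which in turn forces you to carry out the two-stage convexification $f_k\to f_k'\to f_k''$ (and $h_k\to h_k'\to h_k''\to\tilde h_k$) as in the paper, rather than a single grafting of a cone.

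A few of the numerical details are also off. For the $h_k$-side the final cone slope must be $\mu_k=\nu_{\eta_k}=\tau_k-(c-a)/R$, not $\tau_k=a/(\eta_k R)$; the offset $(c-a)/R$ is exactly why the tangent from $(0,-a)$ to the corner of the $h_k$-plateau has slope $\mu_k$. Similarly, for $f_k$ the claim that a tangent slope near $r_{k2}$ tends to $a/R$ is heuristic at best; what matters, and what the paper establishes, is that during each monotone homotopy the slopes of tangent lines through $(0,-a)$ remain in the $\Lambda_\alpha$-free intervals $(a/R,l_0)$ and $(l_-,l_+)$, so that Lemma~\ref{lem:iso} (in the broader class $\mathscr{K}_{R;\alpha}^{a,b}$ of Remark~\ref{rem:HFfcsf}) applies at each stage. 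You identify this as the ``principal technical obstacle'' but do not specify the intermediate profile functions needed to make the check feasible; without them the assertion that the interpolation ``stays inside $\mathscr{K}_{R;\alpha}^{a,+\infty}$'' is not yet an argument.
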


The proof of proposition~\ref{prop:pf} is standard, which can be completed by mimicking that of \cite[Theorem~3.1]{We0}. Indeed, in Weber's paper~\cite{We0} he gives the above
result with $R=1$ but by a rescaling argument one gets the result for general $R>0$. 
The basic idea is to deform $f_k$ and $h_k$ by monotone homotopies to convex radial functions so that Theorem~\cite[Theorem~2.9]{We0} can be applied. Full details can be found in Appendix~\ref{app:{prop:pf}}.

\section{Proofs of the main theorems and remarks.}\label{sec:5}
\setcounter{equation}{0}

\begin{figure}[H]
  \centering
  \includegraphics[scale=0.5]{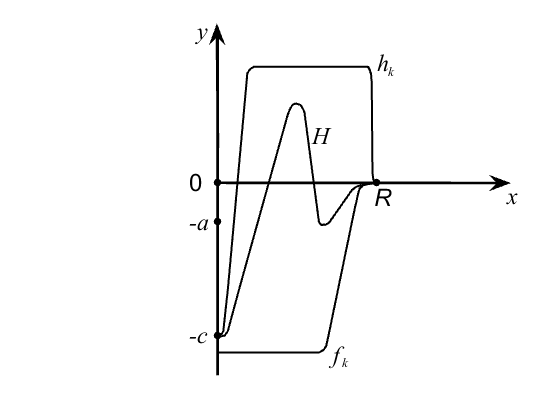}
  \caption{The function $H$}\label{fig:8}
\end{figure}


\subsection{Proof of Theorem~\ref{thm:1}}
\begin{proof}
Let $c=-\sup_{S^1\times M}H$. Then $c> Rl_{\alpha}$.
Choose $a\in[Rl_\alpha,c]$ so that $a/R\notin \Lambda_\alpha$. Then we can construct two sequences of functions $\{f_k\}_{k\in\mathbb{N}}$ and $\{h_k\}_{k\in\mathbb{N}}$ as in Section~\ref{subsec:pf}. For sufficiently large $k$ it follows that
$$H^{f_k}\leq H\leq H^{h_k}$$
as illustrated in Figure~\ref{fig:8}.
Fix such an integer $k=k(H)$. Then by Proposition~\ref{prop:pf} we have the commutative diagram (\ref{diag:dc3}).

So the monotone homomorphism $\Psi_{h_k f_k}$ is nonzero due to our assumption that $\mu_k\geq a/R\geq l_\alpha$; see Remark~\ref{rem:nontriviality}. Applying Theorem~\ref{thm:Invariance} to the Hamiltonians  $H^{f_k}$ and $H^{h_k}$, there exist positive constants $\delta_1=\delta_1(H^{f_k},g,\sigma,a,\alpha)$ and $\delta_2=\delta_2(H^{h_k},g,\sigma,a,\alpha)$ such that if $|\delta|<\delta_0:=\min\{\delta_1, \delta_2\}$, then we have the following commutative diagram

\begin{eqnarray}
\begin{CD}\label{diag:dc5}
{\rm HF}^{(a,+\infty)}_{\alpha}(H^{f_k}) @>{\Psi_{h_kf_k}}>> {\rm HF}^{(a,+\infty)}_{\alpha}(H^{h_k})\\
@V{\Psi_{\omega_0}^{\omega_{\delta\sigma}}}VV  @VV{\Psi_{\omega_0}^{\omega_{\delta\sigma}}}V \\
 {\rm HF}^{(a,+\infty)}_{\alpha}(H^{f_k},\delta\sigma) @>{\Psi^{\delta\sigma}_{h_kf_k}}>> {\rm HF}^{(a,+\infty)}_{\alpha}(H^{h_k},\delta\sigma)
\end{CD}
\end{eqnarray}
Combining (\ref{diag:dc3}) and (\ref{diag:dc5}) we deduce that  for $|\delta|<\delta_0$ the monotone homomorphism
$\Psi^{\delta\sigma}_{h_kf_k}$ is also nonzero. Given $\delta\in(-\delta_0,\delta_0)$,  choose a sequence of Hamiltonian functions $H_i \in \mathscr{H}$ such that $H_i$ satisfy the non-degeneracy condition (C) and converge to $H$ in $C^\infty$ topology, $H^{f_k}\leq H_i\leq H^{h_k}$, $a\notin\mathscr{S}_\alpha(H_i,\delta\sigma)$ and $\sup_{S^1\times M}H_i<- Rl_{\alpha}$. Then, by Lemma~\ref{lem:mh}, the non-trivial  homomorphism
$\Psi^{\delta\sigma}_{h_kf_k}$ factors through the Floer homology group ${\rm HF}^{(a,+\infty)}_{\alpha}(H_i,\delta\sigma)$. Hence there exists a sequence of periodic orbits $x_i\in\mathscr{P}_{\alpha}(H_i,\delta\sigma)$ such that
$\mathscr{A}_{H_i,\delta\sigma}(x_i)> a$. Passing to a converging subsequence, we get a periodic orbit $x\in\mathscr{P}_{\alpha}(H,\delta\sigma)$ with $\mathscr{A}_{H,\delta\sigma}(x)\geq a$.

\end{proof}


\subsection{Proof of Theorem~\ref{thm:2}}

\begin{proof}
Consider the Hamiltonian function defined by
$$\bar{H}(t,x):=-H(-t,x)\quad \forall\;(t,x)\in S^1\times D_RT^*M.$$
Let $C(\alpha)=Rl_\alpha$. Obviously, $x(t)$ is a periodic orbit of $H$ representing $-\alpha$ if and only if $x(-t)$ is a periodic orbit of $\bar{H}$ representing $\alpha$, and
it holds that
$$\sup_{S^1\times M}\bar{H}\leq -C(\alpha).$$
Imitating the proof of Theorem~\ref{thm:1} with replacing Theorem~\ref{thm:Invariance} by Theorem~\ref{thm:Inv} concludes the proof of Theorem~\ref{thm:2}.

\end{proof}

\subsection{Proofs of Theorem~\ref{thm:3} and Theorem~\ref{thm:4}}
The idea of the proof of Theorem~\ref{thm:3} is the same as that of Theorem~\ref{thm:1}, while the difference between the two proofs is that one can squeeze uniformly a class of functions (whose graphs lie in the grey region in Figure~\ref{fig:9} and on the line $y=0,x\geq R-\rho$) from above and below in the proof of Theorem~\ref{thm:3}. The proof of Theorem~\ref{thm:4} is similar to that of Theorem~\ref{thm:3}. Here we only show the latter. 
\begin{figure}[H]
  \centering
  \includegraphics[scale=0.5]{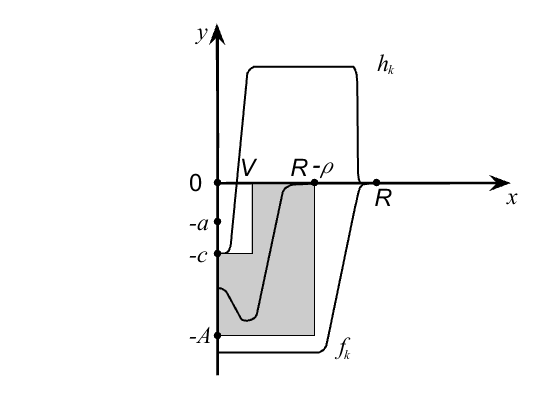}
  \caption{A class of functions}\label{fig:9}
\end{figure}


\begin{proof}
Let $\epsilon>0$ be an arbitrary number in $(0,A-Rl_\alpha)$ and set $c=Rl_\alpha+\epsilon$. Pick $a=a(\epsilon)\in[Rl_\alpha,c]$ so that $a/R\notin \Lambda_\alpha$. Construct two sequences of functions $\{f_k\}_{k\in\mathbb{N}}$ and $\{h_k\}_{k\in\mathbb{N}}$ as in Section~\ref{subsec:pf}. Then there exists an integer $k_0=k_0(g,\sigma,\alpha, V,A,\rho,\epsilon)>0$ such that
$$H^{f_{k_0}}\leq H\leq H^{h_{k_0}}$$
for every $H\in\mathscr{H}_c(U_{R},U_{R-\rho},V,A)$.
Following the proof of Theorem~\ref{thm:1} shows that
there exists a constant
$$\delta_0:=\delta_0(g,\sigma,\alpha, V,A,\rho,\epsilon)>0$$ such that if $|\delta|<\delta_0$ then every $H\in\mathscr{H}_c(U_{R},U_{R-\rho},V,A)$  admits a $1$-periodic Hamiltonian orbit with respect to the twisted symplectic form $\omega_{\delta\sigma}$ whose projection to $M$ represents $\alpha$. Therefore, by definition, for each $\delta\in(-\delta_0,\delta_0)$ we have $\hat{c}_{\rm BPS}(U_R,U_{R-\rho},V,A;\delta\sigma,\alpha)\leq Rl_\alpha+\epsilon$.

\end{proof}

\subsection{Proofs of Theorem~\ref{thm:AET1} and Theorem~\ref{thm:AET2}}
The key to the proofs of Theorem~\ref{thm:AET1} and Theorem~\ref{thm:AET2} is the following monotonicity of the restricted BPS capacity.
\begin{proposition}\label{prop:monotonicity}
Let $W_1,W_2$ be open subsets of $T^*M$ containing $M$, $V_2\subset V_1\subset U_1\subset U_2$, $W_1\subset W_2$ and $0<A_1\leq A_2$, where $W_i,U_i,V_i$, $i=1,2$ are open subsets of $T^*M$ and $U_i$ have compact closure $\bar{U}_i\subset W_i$. Then it holds that
$$\hat{c}_{\rm BPS}(W_1,U_1,V_1,A_1;\sigma,\alpha)\leq\hat{c}_{\rm BPS}(W_2,U_2,V_2,A_2;\sigma,\alpha).$$
\end{proposition}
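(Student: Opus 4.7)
The plan is to argue directly from the definition of $\hat{c}_{\rm BPS}$: given an admissible Hamiltonian for the smaller data, extend it by zero to obtain an admissible Hamiltonian for the larger data, apply the hypothesis to produce a fast periodic orbit, and trap that orbit inside $U_1$ so that it also qualifies as a fast periodic orbit of the original Hamiltonian. Since the inequality is vacuous when the right-hand side equals $+\infty$, I would fix any $c > \hat{c}_{\rm BPS}(W_2,U_2,V_2,A_2;\sigma,\alpha)$ together with any $H \in \mathscr{H}_c(W_1,U_1,V_1,A_1)$, and aim to produce a fast $\alpha$-periodic orbit of $H$.

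First I would define $\tilde{H}\colon W_2 \to \R$ by $\tilde{H} = H$ on $W_1$ and $\tilde{H} = 0$ on $W_2 \setminus \supp(H)$; this is well-defined and smooth because $\supp(H) \subset U_1 \subset W_1$ is compact, so the two definitions agree (both being zero) on the overlap $W_1 \setminus \supp(H)$. The verification that $\tilde{H} \in \mathscr{H}_c(W_2,U_2,V_2,A_2)$ is then routine: (H0) is inherited from $H \leq 0$; (H1) holds since $\supp(\tilde{H}) = \supp(H) \subset U_1 \subset U_2$; (H2) follows from $\inf_{W_2}\tilde{H} = \inf_{W_1} H > -A_1 \geq -A_2$; and the normalization $\sup_{V_2}\tilde{H} \leq -c$ is immediate from $V_2 \subset V_1$ together with $\sup_{V_1} H \leq -c$.

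By the choice of $c$, the definition of $\hat{c}_{\rm BPS}$ supplies a fast periodic orbit $x$ (with period in $(0,1]$) of $X_{\tilde{H},\sigma}$ whose projection to $M$ represents $\alpha$. The crucial step will be the following \emph{trapping} observation: writing $\Sigma := \bigcup_{t \in S^1}\supp(\tilde{H}_t) \subset U_1$, for any $z_0 \notin \Sigma$ the vector field $X_{\tilde{H},\sigma}(t,z_0)$ vanishes for every $t$, so the constant path $y(t) \equiv z_0$ solves the Hamiltonian ODE. Uniqueness of the initial value problem then forces any orbit passing through $z_0$ to be identically constant. Since $\alpha$ is nontrivial (the case $\alpha = 0$ admits only constant orbits outside $U_1$, for which the inequality is vacuous), $x$ cannot be constant, hence $x(S^1) \subset \Sigma \subset U_1 \subset W_1$, and so $x$ is already a fast $\alpha$-periodic orbit of $H$. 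Consequently $\hat{c}_{\rm BPS}(W_1,U_1,V_1,A_1;\sigma,\alpha) \leq c$, and letting $c$ decrease to the right-hand side will finish the proof.

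The only nontrivial ingredient I anticipate is the trapping step, whose validity rests on the strict compact containment $\supp(H) \subset U_1$ (which guarantees that the extended vector field is genuinely zero outside $U_1$) together with standard ODE uniqueness; the remaining ingredients are formal manipulations of the infimum defining $\hat{c}_{\rm BPS}$ and should present no difficulty.
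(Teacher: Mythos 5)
Your argument is correct; the paper in fact states Proposition~\ref{prop:monotonicity} without a proof, and the route you take is the natural one. You extend $H\in\mathscr{H}_c(W_1,U_1,V_1,A_1)$ by zero to $\tilde H$ on $W_2$ (which is smooth because $\supp(H)\subset U_1\subset\bar U_1\subset W_1$ is compact), verify (H0)--(H2) and $\sup_{V_2}\tilde H\le -c$ for the larger data, obtain a fast $\alpha$-periodic orbit $x$ of $X_{\tilde H,\sigma}$ from the definition of $\hat c_{\rm BPS}(W_2,U_2,V_2,A_2;\sigma,\alpha)$, and then trap $x$ inside $\supp(H)\subset U_1\subset W_1$ by noting that any point outside $\supp(\tilde H)$ is a rest point of $X_{\tilde H,\sigma}$, so by uniqueness an orbit touching that set is constant and hence nullhomotopic, which is excluded when $\alpha\ne 0$ (the only case of interest, as in the applications of the proposition to Theorems~\ref{thm:AET1} and~\ref{thm:AET2}); since $\tilde H|_{W_1}=H$, the trapped orbit is the desired fast $\alpha$-periodic orbit of $H$, and letting $c\downarrow\hat c_{\rm BPS}(W_2,U_2,V_2,A_2;\sigma,\alpha)$ finishes the proof.
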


Our proof of Theorem~\ref{thm:AET1} is an adaption of the standard almost existence theorem (see~\cite[Section~4.2]{HZ} or~\cite{SW}).
For the sake of completeness we shall give a proof of Theorem~\ref{thm:AET1}. The proof of Theorem~\ref{thm:AET2} is nearly identical to that of Theorem~\ref{thm:AET1}, we omit it here.
\begin{proof}
Along the lines of~\cite[Section~4.2]{HZ}, we proceed in 3 steps.

\noindent \textbf{Step 1.}
By our assumption, the sublevel $\{H<s\}$ is contained in $U_{r(s)}$, where $r:\mathbb{R}\to (0,\infty)$ is a nondecreasing function. Consider the monotone functions $\hat{c}_{\delta}:[d,T]\to[0,\infty]$ defined by
$$\hat{c}_{\delta}(s):=\hat{c}_{\rm BPS}(U_{r(T+\rho)+\rho},\{H<s\},V,A;\delta\sigma,\alpha).$$
Theorem~\ref{thm:3} implies that there exists a constant
$\delta_0:=\delta_0(g,\sigma,\alpha, V,A,T,\rho)>0$ such that if $|\delta|<\delta_0$ then
$$\hat{c}_{\rm BPS}(U_{r(T+\rho)+\rho},U_{r(T+\rho)},V,A;\delta\sigma,\alpha)< A$$
provided $A>(r(T+\rho)+\rho)l_\alpha$.
Then we deduce from Proposition~\ref{prop:monotonicity} that
$$\hat{c}_{\delta}(s)\leq \hat{c}_{\delta}(T+\rho)\leq \hat{c}_{\rm BPS}(U_{r(T+\rho)+\rho},U_{r(T+\rho)},V,A;\delta\sigma,\alpha)< A\quad \forall s\in[d,T+\rho],$$
where $|\delta|<\delta_0(g,\sigma,\alpha, V,A,T,\rho)$. Fix $\delta\in(-\delta_0,\delta_0)$.
So Lebesgue's last theorem implies that the function $\hat{c}_{\delta}$ is
differentiable at almost every point in the sense of measure theory. Suppose that $s_0\in [d,T]$ is a regular value of $H$ and $\hat{c}_{\delta}$ is Lipschitz continuous at $s_0$. Then, by the
implicit function theorem, $S_0:=H^{-1}(s_0)$ is a hypersurface in $T^*M$,  which is compact and bounds
the sublevel set $B_0:=\{H<s_0\}$ since $H$ is proper and bounded below. By the implicit function theorem again and compactness of
$S_0$, one can find a parameterized family of hypersurfaces $S_\varepsilon$ in $T^*M$ with $S_\varepsilon=H^{-1}(s_0+\varepsilon)$ such that
$$c(\varepsilon)\leq c(0)+L\varepsilon,\quad c(\varepsilon):=\hat{c}_{\delta}(s_0+\varepsilon)$$
for $0\leq\varepsilon\leq\eta$, where $L$ and $\eta$ are positive constants. By choosing $\eta$ smaller, let us require that $\eta<\min\{\rho,(A-c(0))/(2L)\}$ ($A>c(0)$ by our assumption). For  $0<\varepsilon\leq \eta$, let us denote $B_\varepsilon:=\{H<s_0+\varepsilon\}$, then
$B_\varepsilon\subset \{H<T+\rho\}\subset U_{r(T+\rho)}$.
Fixing $\varsigma\in(0,\eta]$, we define a smooth function $f:\mathbb{R}\to [-2L\varsigma,0]$ by
\begin{equation}\notag
             \begin{array}{ll}
            f(s)=-2L\varsigma & \hbox{if}\;s\leq 0 \\
            f(s)=0 & \hbox{if}\;s\geq\frac{\varsigma}{2}\\
            0<f^\prime(s)\leq 8L & \hbox{if}\;0<s<\frac{\varsigma}{2}.
             \end{array}
\end{equation}
By the definition of the restricted BPS capacity $\hat{c}_\delta(s_0)=c(0)$, there exists a Hamiltonian function $G\in\mathscr{H}(U_{r(T+\rho)+\rho},B_0,A)$ such that
\begin{equation}\label{e:funG}
-c(0)<\sup_{V}G\leq -(c(0)-L\varsigma)
\quad \hbox{and}\quad \mathscr{P}_{\alpha}(G,\delta\sigma;\tau)=\emptyset\quad\forall\; 0<\tau\leq1.
\end{equation}
Otherwise, $c(0)=\hat{c}_\delta(s_0)\leq c(0)-L\varsigma$ which contradicts with the fact that $L\varsigma\geq 0$.
Now we constructed a new function $\widetilde{G}$ by cutting off $G$ without creating new fast periodic orbits.
More precisely, let $\chi:[-c(0)-\epsilon,-c(0)+\epsilon]\to \mathbb{R}$ be a function with $0\leq \chi^\prime\leq 1$ such that $\chi(t)=-c(0)$ for $t$ near the left endpoint of the interval and $\chi(t)=t$ for $t$ near the right endpoint of the interval, where $\epsilon>0$ is sufficiently small
so that $-c(0)+\epsilon<\sup_{V}G$. Set
\begin{equation}\notag
\widetilde{G}(x):=\left\{
             \begin{array}{ll}
            -c(0)&\hbox{if}\;G(x)\leq -c(0)-\epsilon,  \\
             \chi(G(x)) & \hbox{if}\;-c(0)-\epsilon\leq G(x)\leq-c(0)+\epsilon,\\
             G(x) & \hbox{if}\;-c(0)+\epsilon\leq G(x).
             \end{array}
\right.
\end{equation}
The new non-positive function $\widetilde{G}$ compactly supported in $B_0$ satisfies $$\mathscr{P}_{\alpha}(\widetilde{G},\delta\sigma;\tau)
=\emptyset\quad\forall\; 0<\tau\leq1$$ since $|\chi^\prime|\leq 1$. Furthermore, $\sup_{V}\widetilde{G}\leq -(c(0)-L\varsigma)$ and $\inf_{B_0}\widetilde{G}\geq -c(0)$.
Consider the compactly supported Hamiltonian function $K\in C^\infty(B_\varsigma)$ defined by
\begin{equation}\notag
             \begin{array}{ll}
            K(x)=\widetilde{G}(x)-2L\varsigma & \hbox{if}\;x\in B_0 \\
            K(x)=f(\varepsilon) & \hbox{if}\;x\in S_\varepsilon,\; 0\leq \varepsilon<\varsigma\\
            K(x)=0 & \hbox{if}\;x\notin B_\varsigma.
             \end{array}
\end{equation}
The function $K$ satisfies
$$\sup_{V}K=\sup_{V}\widetilde{G}-2L\varsigma \leq -(c(0)-L\varsigma)-2L\varsigma\leq -c(\varsigma)\quad\hbox{and}$$
$$\inf_{B_\varsigma}K= \inf_{B_0}\widetilde{G}-2L\varsigma
> -c(0)-2L\eta> -A.$$
The definition of the restricted BPS capacity $c(\varsigma)$ shows that
$K$ has a fast periodic orbit $x$ with respect to $\omega_{\delta\sigma}$ whose projection to $M$ represents $\alpha$. We claim that $x$ cannot intersect $B_0$. Indeed, if $x$ intersects $B_0$ then it stays completely inside $B_0$ since $B_0$ is invariant under the flow of $K$. This is impossible because the flows of $K$ and $\widetilde{G}$ on $B_0$ coincide and
$\widetilde{G}$ does not have fast periodic orbits. Since $\alpha\neq 0$, the fast periodic orbit $x$ whose projection represents $\alpha$ is nontrivial.
As a consequence, $x$ must be contained in $B_\varsigma\setminus \overline{B_0}$, and hence it lies on
$S_\varepsilon$ for some $0< \varepsilon<\varsigma$.

\noindent \textbf{Step 2.} Step 1 works for every $\varsigma\in(0,\eta]$. Choosing a sequence $\varsigma_j\to 0$, one can find sequences $K_j$ and $\varepsilon_j$, and a corresponding sequence $x_j(t)$ of periodic orbits of $X_{K_j,\delta\sigma}$ having periods $0<\tau_j\leq 1$ and lying on
$S_{\varepsilon_j}$ with $\varepsilon_j\to 0$. Consider the Hamiltonian $H$ on the set
$$U=\bigcup\limits_{\varepsilon\in(-\eta,\eta)}S_\varepsilon.$$
Obviously,  if $x\in S_\varepsilon$ then $H(x)=s_0+\varepsilon$ and $K_j(x)=f_j(\varepsilon)=f_j(H(x)-s_0)$. By construction, the periodic orbits $x_j$ solve the equations
\begin{equation}\notag
             \begin{array}{ll}
            \dot{x}_j(t)=f^\prime_j(\varepsilon_j)X_{H,\delta\sigma}\big(x_j(t)\big)\\
            x_j(0)=x(\tau_j)
             \end{array}
\end{equation}
with the periods $0\leq\tau_j\leq1$. Normalizing the periods to $1$ we define the functions
$$y_j(t)=x_j\big(\tau_jt\big)\quad \forall t\in[0,1]$$
which solve the Hamiltonian equations
$$\dot{y}_j(t)=f^\prime_j(\varepsilon_j)\tau_jX_{H,\delta\sigma}(y_j(t))\quad \hbox{and}\quad H\big(y_j(t)\big)=\varepsilon_j.$$

\noindent \textbf{Step 3.} By construction, $f^\prime_j\leq 8L$ and hence $f^\prime_j(\varepsilon_j)\tau_j$ are bounded. This observation is very useful for us to obtain a periodic orbit on $S_0$. Indeed, we first note that the hypersurfaces $S_{\varepsilon_j}$ are contained in the compact set $\overline{B_\eta}$, hence the functions $x_j$ are uniformly bounded. For all $t\in S^1$ and all $j\in \mathbb{N}$ we estimate
$$\|\dot{y}_j(t)\|_{G_g}=|f^\prime_j(\varepsilon_j)\tau_j|\cdot \|X_{H,\delta\sigma}\big(x_j(t)\big)\|_{G_g}\leq 8L \sup_{x\in \overline{B_\eta}}\|X_{H,\delta\sigma}\|_{G_g}.
$$
Then, by Arzela-Ascoli theorem, passing to subsequences, $f^\prime_j(\varepsilon_j)\tau_j$ converges to some $\tau\geq0$ and $y_j(t)$ converges in $C^0$-topology, and,  by making use of the equations, even converges in $C^\infty$-topology to a smooth $1$-periodic solution $y$ of the equation
$$\dot{y}(t)=\tau X_{H,\delta\sigma}(y(t)),\quad y(t)\subset S_0.$$
We claim that $\tau\neq 0$. Otherwise, $y(t)=y^*$ for some point $y^*\in S_0$. This contradicts with the fact that
the projection of $y$ on $M$ represents the non-trivial class $\alpha$ since $[\pi (y_j)]=\alpha$ and $\pi (y_j)$ converges to $ \pi(y)$ in $C^\infty$-topology. Reparametrizing time we obtain
the $\tau$-periodic solution $x(t):=y(t/\tau)$ of the equation
$$\dot{x}(t)=X_{H,\delta\sigma}(x(t))$$
whose projection on $M$ represents $\alpha$. The proof of Theorem~\ref{thm:AET1} is completed.
\end{proof}

\subsection{Concluding remarks}

\subsubsection{Hamiltonian flows without non-contractible closed trajectories}
The following proposition shows that the constant $\delta_0(H,g,\sigma,a,b,\alpha)$ in Theorem~\ref{thm:1} depends on $H$. As a consequence, one can not extend Theorem~\ref{thm:1} (resp. Theorem~\ref{thm:2}) to the case that $\delta_0(H,g,\sigma,a,b,\alpha)$ (resp. $\delta_0(H,g,a,b,\alpha)$) is arbitrarily large.
\begin{proposition}~\label{pro:nonexistence}
Let $M$ be a closed Riemanian surface endowed with a metric $g$ of constant curvature $K=-1$. Then there exists a sequence of compactly supported Hamiltonians $\{H_n\}_{n\in \mathbb{N}}\subset C^\infty(D_1T^*M)$ with $\inf_{M}H_n>n$ and a sequence of numbers $\{\delta_n\}_{n\in \mathbb{N}}$ converging to $0$ such that the periodic orbits of the Hamiltonian flow of $H_n$ with respect to $\omega_n=\omega_0+\delta_n\pi^*\sigma$ are all contractible.

\end{proposition}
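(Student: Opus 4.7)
The plan is to realize each $H_n$ as a radial function of $\|p\|_g$ supported in a very small disc bundle, and to use the fact that on a closed hyperbolic surface the twisted geodesic flow at speed smaller than the magnetic field strength consists entirely of contractible closed orbits (coming from hyperbolic circles on the universal cover).

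First, take $\sigma$ to be the area form of $(M,g)$, so $|\sigma|_g=1$. For the kinetic energy $H_g(q,p)=\|p\|_g^2/2$ and any $\delta>0$, the Hamiltonian flow of $H_g$ with respect to $\omega_0+\delta\pi^*\sigma$ preserves $\|p\|_g$ and projects to $M$ as curves $q(t)$ satisfying $\nabla_{\dot q}\dot q=\delta J\dot q$, where $J$ is the orthogonal almost complex structure on $TM$ induced by $g$ and the orientation. Hence a trajectory of speed $v>0$ has constant geodesic curvature $k=\delta/v$. Since $(M,g)$ has constant curvature $-1$, in the universal cover $\mathbb{H}^2$ any smooth curve of constant geodesic curvature $k>1$ is a closed hyperbolic circle (of hyperbolic radius $r$ with $\coth r=k$). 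Therefore, for every $v<\delta$, every magnetic geodesic at speed $v$ lifts to a closed circle $C\subset \mathbb{H}^2$.

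The key elementary observation is that such an orbit is automatically contractible on $M$. Indeed, $\pi_1(M)$ acts freely on $\mathbb{H}^2$, while any isometry stabilizing $C$ setwise must fix its hyperbolic center; so no nontrivial deck transformation preserves $C$. If $q:S^1\to M$ is a closed orbit of the magnetic flow at speed $v<\delta$, its lift satisfies $\tilde q(t+1)=\gamma \tilde q(t)$ for some $\gamma\in\pi_1(M)$, and since $\tilde q(\mathbb{R})=C=\gamma\cdot C$ we conclude $\gamma=1$; hence $\tilde q$ is closed and $q$ is contractible.

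Now for each $n\in\mathbb{N}$, set $\delta_n=1/n$ and choose a smooth profile $h_n:[0,\infty)\to[0,n+1]$ with $h_n(0)=n+1$, $h_n'(r)<0$ for $r\in(0,\delta_n/2)$, and $h_n(r)=0$ for $r\geq \delta_n/2$. Define $H_n(q,p):=h_n(\|p\|_g)$. Then $H_n\in C^\infty(D_1T^*M)$ is compactly supported in $D_{\delta_n/2}T^*M\subset D_1T^*M$ and $\inf_{M}H_n=h_n(0)=n+1>n$. Writing $\rho=\|p\|_g$, the identity $dH_n=(h_n'(\rho)/\rho)\,dH_g$ on $\{\rho>0\}$ yields
$$X_{H_n}=\frac{h_n'(\rho)}{\rho}\,X_{H_g}\qquad\text{on }\{\rho>0\},$$
with both vector fields taken relative to $\omega_n$. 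Consequently every nonconstant periodic orbit of $H_n$ is a time reparametrization of a closed magnetic geodesic at some speed $\rho\in(0,\delta_n/2)$, and since $\rho<\delta_n$ the preceding paragraph shows its projection to $M$ is contractible. The remaining periodic orbits of $H_n$ are constant (either on the zero section or in $\{\rho\geq \delta_n/2\}$, where $h_n'(\rho)=0$) and thus trivially contractible. Finally, $\delta_n=1/n\to 0$ as $n\to\infty$.

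The main technical point is the classical characterization of curves in $\mathbb{H}^2$ of constant geodesic curvature (hypercycles / horocycles / hyperbolic circles according to whether $k<1$, $k=1$, or $k>1$); once this input is available, the above reduction is routine. An alternative to choosing $\sigma$ as the area form would be to take $\sigma$ proportional to it, which only rescales the threshold speed but does not affect the argument.
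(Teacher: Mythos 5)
Your proof is correct and reaches the same conclusion as the paper's, but it is more self-contained: where the paper invokes Ginzburg's theorem (Theorem~2.5 in \cite{Gi1}) as a black box to obtain that all magnetic orbits below the Ma\~n\'e critical level $\{F=1/2\}$ are closed and contractible, you re-derive this fact directly from the classification of constant geodesic curvature curves in $\mathbb{H}^2$ (circle if $k>1$, horocycle if $k=1$, hypercycle or geodesic if $k<1$), and you supply an explicit deck-transformation argument for contractibility that the paper leaves implicit. Both proofs then build $H_n$ as a radial function $h_n(\|p\|_g)$ supported strictly below the shrinking threshold speed $\delta_n$ so that the flow of $H_n$ is a reparametrization of magnetic geodesic flows at sub-critical speeds; the paper phrases this via a rescaling of the pair $(\omega_0,\delta dA)$ plus ``more and more narrow bump functions,'' you do it by directly shrinking the support radius to $\delta_n/2$. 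The trade-off is that the paper's route is shorter by relying on the cited result, while yours is longer but entirely elementary and makes the geometric mechanism (hyperbolic circles lift to closed orbits with trivial monodromy because $\pi_1(M)$ contains no elliptics) completely transparent.

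One small point worth making explicit: for $H_n(q,p)=h_n(\|p\|_g)$ to be smooth across the zero section you should require $h_n$ to extend to a smooth even function of $r$ (in particular $h_n'(0)=0$), consistent with the paper's convention $f(r)=f(-r)$ for radial profiles; this is compatible with your condition $h_n'(r)<0$ only on the open interval $(0,\delta_n/2)$, but should be stated.
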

This proposition is due to Niche~\cite{Ni}. For the sake of completeness, we outline a proof of Proposition~\ref{pro:nonexistence} below. The following proof is different from Niche's proof and is based on a theorem of Ginzburg~\cite{Gi1}.

\begin{proof}
We start with $\omega=\omega_0+\pi^*dA$ where $dA$ is the area form on $(M,g)$. Further, let $F$ be the standard kinetic energy Hamiltonian.
Then \cite[Theorem~2.5]{Gi1} shows that on every level $\{F=c\}$ with $c<1/2$ all integral curves of $F$ are closed and contractible, and there is no closed orbit on $\{F=1/2\}$  (on which the Hamiltonian flow is the horocycle flow, cf.~\cite{He}). Let $V=\{F<1/2\}$. Let $\chi:[0,1/2]\to [0,C]$ be a ``one-sided bump" function with $\chi^\prime\leq 0$ such that $\chi(t)=C$  near $t=0$ and $\chi(t)=0$ near $t=1/2$. Here $C$ is a constant and can be made arbitrarily large. Let $H=\chi\circ F$. Then $H$ is supported in $V$. Moreover, the flow of $H$ is essentially a reparametrization of the flow of $H$ on $V$ and hence all orbits are closed and contractible.

Let us now replace the area form $dA$ by a new magnetic field $\delta dA$ where $0<\delta \leq 1$. The region $V$ will shrink to $V_\delta $ or in other words the threshold level $\{F=1/2\}$ will get closer to $M$ getting replaced by $\{F=\delta^2 /2\}$. But everything else remains the same. Indeed, by a rescaling argument, the existence of closed trajectories of $\omega=\omega_0+\delta \pi^*dA$ on the energy level $\{F=\delta^2 /2\}$ is equivalent to the existence of closed trajectories of $\omega=\delta \omega_0+\delta \pi^*dA$ on the energy level $\{F=1/2\}$. The flow of the later is a reparametrization of the flow of $F$ with respect to $\omega=\omega_0+\pi^*dA$ on $\{F=1/2\}$, and hence has no non-contractible periodic obits.

Finally, by taking some sequence $\{\delta _n\}_{n=1}^{\infty}$ satisfying $\delta _n\to 0$ and a sequence of Hamiltonians $H_n$ obtained by composing the same $F$ with more and more narrow bump functions, we obtain the desired result.

\end{proof}

\subsubsection{Counterexample}
The condition $\{H<d\}\supset M$ in Theorem~\ref{thm:AET1} and Theorem~\ref{thm:AET2} cannot be dropped. The following example is given by Salom\~{a}o and Weber~\cite{SW}.
\begin{example}
{\rm
Let $M=S^1=\mathbb{R}/\mathbb{Z}$. Consider the function $H:T^*M=S^1\times \mathbb{R}\to \mathbb{R}$ given by
$$H(q,p)=\frac{1}{2}\|p\|_g^2+V(q),$$
where $V(q)=1+cos2\pi q$. Then $\{H<1\}$ does not contain $M$, and for any energy $s\in[1, 2)$ the level set ${H=s}$ does not carry non-contractible periodic orbits.
\rm}
\end{example}

\section{Appendix. The proof of Proposition~\ref{prop:pf}}\label{app:{prop:pf}}
To prove Proposition~\ref{prop:pf}, we need a theorem of Weber in~\cite{We0} that computes Floer homology of convex radial Hamiltonians.

\begin{theorem}[{\cite[Theorem~2.9]{We0}}]\label{thm:crH}
	Let $f:\mathbb{R}\to\mathbb{R}$ be a smooth symmetric function with $f^{\prime\prime}\geq0$. If $\tau\in\mathbb{R}^+\setminus\Lambda_\alpha$ and $f^{\prime}(r)=\tau$ for some $r>0$, then there is a natural isomorphism
	\begin{equation}\label{e:B}
	\Phi_f^\tau:{\rm HF}^{(-\infty,b_{f,\tau})}_{\alpha}(H^f)\to
	{\rm H}_*(\mathcal{L}_\alpha^{\tau^2/2}M),\quad b_{f,\tau}:=rf^{\prime}(r)-f(r).
	\end{equation}
	If $H^h$ is another such Hamiltonian, then there exists an isomorphism $\Psi_{hf}^\tau$ such that the following diagram commutes:
	\begin{eqnarray}\label{e:crHdc}
	\xymatrix{{\rm HF}^{(-\infty,b_{f,\tau})}_{\alpha}(H^f)
		\ar[rr]_{\simeq}^{\Psi_{hf}^\tau}\ar[dr]^{\simeq}_{\Phi_f^\tau}& &{\rm HF}^{(-\infty,b_{h,\tau})}_{\alpha}(H^h)\ar[dl]_{\simeq}^{\Phi_h^\tau}\\ & {\rm H}_*(\mathcal{L}_\alpha^{\tau^2/2}M)&  }
	\end{eqnarray}
	If $\rho\in(0,\tau]\setminus\Lambda_\alpha$ and $f^{\prime}(s)=\rho$ for some $s>0$, then we have the following communicative diagram whose top horizonal row is the natural inclusion $\iota^F$:
	\begin{eqnarray}
	\begin{CD}\label{diag:dc2}
	{\rm HF}^{(-\infty,b_{f,\rho})}_{\alpha}(H^f) @>{\iota^F}>> {\rm HF}^{(-\infty,b_{f,\tau})}_{\alpha}(H^f)\\
	@V{\Phi_{f}^\rho}V\simeq V  @V\simeq V{\Phi_{f}^\tau}V \\
	{\rm H}_*(\mathcal{L}_\alpha^{\rho^2/2}M) @>{[\iota^{\tau^2/2}{\rho^2/2}}]>> {\rm H}_*(\mathcal{L}_\alpha^{\tau^2/2}M)
	\end{CD}
	\end{eqnarray}
	
\end{theorem}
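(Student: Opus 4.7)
My approach is to identify the filtered Floer complex of $H^f$ with a Morse (or Morse--Bott) complex of the energy functional $\mathscr{E}$ on $\mathcal{L}_\alpha M$, cut off at the sublevel set $\mathcal{L}_\alpha^{\tau^2/2} M$. This is the standard Viterbo / Abbondandolo--Schwarz / Salamon--Weber picture, adapted to the radial convex setting and equipped with a careful action filtration.

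\textbf{Step 1: Critical point analysis and action values.} Because $H^f$ is autonomous and radial, its Hamilton flow on $\{\|p\|_g = r\}$ is the geodesic flow reparametrized by $f'(r)/r$. A loop $x=(q,p)\in\mathcal{L}_\alpha T^*M$ lies in $\mathscr{P}_\alpha(H^f)$ precisely when $q$ is a closed geodesic of length $l\in\Lambda_\alpha$ and $p=\pm(r/l)\dot q$ for an $r>0$ with $f'(r)=l$. A direct computation gives $\mathscr{A}_{H^f}(x)=rf'(r)-f(r)=b_{f,r}$. Convexity of $f$ makes $r\mapsto b_{f,r}$ non-decreasing, so generators of $\mathrm{CF}^{(-\infty,b_{f,\tau})}_\alpha(H^f)$ are in bijection with closed geodesics in class $\alpha$ of length strictly less than $\tau$ (this bijection is exact because $\tau\notin\Lambda_\alpha$), i.e.\ with critical points of $\mathscr{E}$ of energy strictly less than $\tau^2/2$.

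\textbf{Step 2: Construction of $\Phi_f^\tau$ via a canonical model.} By the monotone homotopy invariance of Floer homology (for homotopies of convex radial Hamiltonians sharing the slope $\tau$, no action crosses $b_{f,\tau}$ because $\tau\notin\Lambda_\alpha$), I may replace $f$ by a particularly convenient convex representative, e.g.\ one that is quadratic $f(r)=r^2/2$ on a neighbourhood of $[0,r(\tau)]$ with $f'(r(\tau))=\tau$, and then linear of slope $>\tau$ at infinity. For such $f$ the Abbondandolo--Schwarz theorem provides a chain map from the Floer complex to the Morse complex of $\mathscr{E}$ (after a small time-dependent perturbation, or via an $S^1$-equivariant Morse--Bott setup, to deal with the $S^1$-families of geodesics) which preserves filtrations, sending a generator $x=(q,p)$ to its underlying geodesic $q$. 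Since $\tau^2/2$ is a regular value of $\mathscr{E}$, Morse homology below $\tau^2/2$ computes $\H_*(\mathcal{L}_\alpha^{\tau^2/2} M)$, yielding the isomorphism $\Phi_f^\tau$.

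\textbf{Step 3: Commutativity of the two diagrams.} For the triangle (\ref{e:crHdc}): the continuation map $\Psi_{hf}^\tau$ induced by a monotone homotopy between two convex $f,h$ with $f'=h'=\tau$ at the relevant radii is an isomorphism of filtered Floer groups (again because the threshold $b_{\cdot,\tau}$ is crossed by no action value as the homotopy parameter varies), and under the identification of Step~2 it becomes the identity map on Morse/singular homology, so $\Phi_h^\tau\circ\Psi_{hf}^\tau=\Phi_f^\tau$. For the square (\ref{diag:dc2}): the top map $\iota^F$ is the inclusion of action-filtered subcomplexes $\mathrm{CF}^{(-\infty,b_{f,\rho})}\hookrightarrow\mathrm{CF}^{(-\infty,b_{f,\tau})}$, which under the Abbondandolo--Schwarz identification corresponds to the inclusion of Morse subcomplexes generated by geodesics of length $<\rho$ into those of length $<\tau$, i.e.\ to the map on singular homology induced by $\mathcal{L}_\alpha^{\rho^2/2} M\hookrightarrow \mathcal{L}_\alpha^{\tau^2/2} M$.

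\textbf{Main obstacle.} The principal difficulty is making the Abbondandolo--Schwarz isomorphism genuinely filtration-preserving at the chain level, together with handling the Morse--Bott degeneracy: for an autonomous radial Hamiltonian the $1$-periodic orbits come in $S^1$-families (from time translation of geodesics), and the critical circles in $\mathscr{P}_\alpha(H^f)$ are in general not Morse--Bott in the standard sense unless the underlying closed geodesics are themselves Morse--Bott critical points of $\mathscr{E}$. This is handled in Weber~\cite{We0} by combining a small autonomous $C^2$-small perturbation that splits each critical circle into a pair of non-degenerate orbits (of indices differing by $1$) with an $S^1$-equivariant Morse--Bott version of the Abbondandolo--Schwarz identification; verifying that the perturbation does not move any action value across $b_{f,\tau}$ (permissible since $\tau\notin\Lambda_\alpha$ opens a spectral gap) is what makes the filtered statement go through, and it is this bookkeeping rather than any new analytic input that forms the technical core of the proof.
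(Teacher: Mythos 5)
This statement is not proved in the paper: it is quoted verbatim as Theorem~2.9 of~\cite{We0} and used as a black box in the Appendix to deduce Proposition~\ref{prop:pf}, so there is no internal proof to compare your sketch against. Your outline is a plausible reconstruction of Weber's argument, but it misattributes the central input. Weber derives his Theorem~2.9 from the Salamon--Weber adiabatic (heat-flow) isomorphism between the Floer homology of $T^*M$ and the Morse homology of the energy functional $\mathscr{E}$ on the free loop space, not from the Abbondandolo--Schwarz Lagrangian half-cylinder chain map you invoke in Step~2; moreover, as constructed the Abbondandolo--Schwarz map runs \emph{from} the Morse complex \emph{to} the Floer complex, the opposite direction from the one you describe. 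The bookkeeping you single out as the real work --- the spectral gap opened by $\tau\notin\Lambda_\alpha$, preservation of the action filtration under convex monotone homotopies, and the $C^2$-small splitting of autonomous $S^1$-families of orbits without any action value crossing $b_{f,\tau}$ --- is indeed the technical heart of Weber's proof. Since both isomorphism constructions can be made filtration-compatible for fiberwise convex radial Hamiltonians via the Legendre-transform action inequality, your alternative route would also close the argument, trading the adiabatic-limit analysis for the action estimate on half-cylinders; but it is not the route the cited source takes.
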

\noindent Here the Floer homology ${\rm HF}^{(a,b)}_{\alpha}(H^f)$ is well defined for every $H^f\in \mathscr{K}_{R;\alpha}^{a,b}$; see Remark~\ref{rem:HFfcsf}.


\begin{figure}[H]
	\centering
	\includegraphics[scale=0.5]{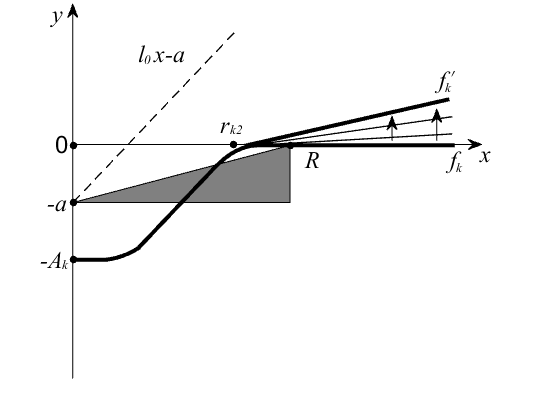}
	\caption{The monotone homotopy between $f_k$ and $f_k^\prime$}\label{fig:3}
\end{figure}


\begin{proof}[Proof of Proposition~\ref{prop:pf}]
	The basic idea is to deform $f_k$ and $h_k$ by monotone homotopies to convex functions so that Theorem~\ref{thm:crH} can be applied. To show the isomorphism~(\ref{e:Ipf1}), first, we follow the graph of $f_k$ until the slope of it becomes $a$ for the second time at a point, saying $p$, near $r_{k2}$, then continue linearly with slope $a$, we obtain a function which is of $C^1$-class at $p$.
	Then smoothing out such a function near $p$ yields a smooth function  $f_k^\prime\in \mathscr{K}_{R;\alpha}^{-\infty,a}$ (see Figure~\ref{fig:3}). The monotone homotopy between $f_k$ and $f_k^\prime$, as showed in Figure~\ref{fig:3}, provides the monotone isomorphism
	\begin{equation}\label{e:hmfk1}
	\Psi_{f_k^\prime f_k}:{\rm HF}^{(a,+\infty)}_{\alpha}(H^{f_k})\to
	{\rm HF}^{(a,+\infty)}_{\alpha}(H^{f^\prime_k}).
	\end{equation}
	This is the consequence of the fact that the $y$-intercepts of the tangential line at all points that do not remain constant during the homotopy are strictly larger than $-a$.
	
	\begin{figure}[H]
		\centering
		\includegraphics[scale=0.5]{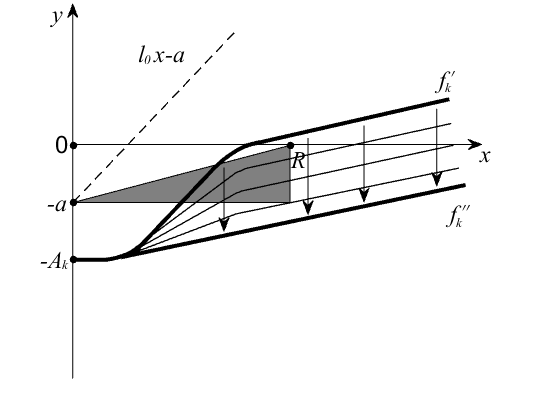}
		\caption{The monotone homotopy between $f_k^\prime$ and $f_k^{\prime\prime}$}\label{fig:4}
	\end{figure}
	
	Second, we define the function $f_k^{\prime\prime}$ obtained by following the the graph $f_k^\prime$ until the slope of it becomes $a$ for the first time at some point $q$; then continue linearly with slope $a$ (see Figure~\ref{fig:4}). The result function is of $C^1$-class at $q$. Smoothing near $q$ we obtain a smooth function $f_k^{\prime\prime}\in\mathscr{K}_{R;\alpha}^{-\infty,a}$.
	Note that all tangential lines of the graphs during the monotone homotopy, as indicated in Figure~\ref{fig:4}, which pass through the point $(0,-a)$ lie strictly between the lines $x\mapsto ax/R-a$ and $x\mapsto l_0x-a$. It follows that the homotopy between $f_k^\prime$ and $f_k^{\prime\prime}$ induces the monotone isomorphism
	\begin{equation}\label{e:hmfk2}
	\Psi_{f_k^{\prime\prime}f_k^\prime}:{\rm HF}^{(a,+\infty)}_{\alpha}(H^{f^\prime_k})\to
	{\rm HF}^{(a,+\infty)}_{\alpha}(H^{f_k^{\prime\prime}}).
	\end{equation}
	Since the $y$-intercept of the tangential line of graph $f_k^{\prime\prime}$ at any point is less than $-a$, the action of
	$\mathscr{A}_{H^{f_k^{\prime\prime}}}$ at each $1$-periodic orbit
	is larger than $a$. Then the exact sequence~(\ref{e:esFH}) yields isomorphisms
	\begin{equation}\label{e:hmfk3}
	{\rm HF}^{(a,+\infty)}_{\alpha}(H^{f_k^{\prime\prime}})\xrightarrow
	{}
	{\rm HF}^{(-\infty,+\infty)}_{\alpha}(H^{f_k^{\prime\prime}})
	\to {\rm HF}^{(-\infty,B)}_{\alpha}(H^{f_k^{\prime\prime}}).
	\end{equation}
	Here $B=b_{f_k^{\prime\prime},a/R}$ is defined in Theorem~\ref{thm:crH} (the fact that $a/R\notin\Lambda_\alpha$ is used). Then by Theorem~\ref{thm:crH} we have the natural isomorphism
	\begin{equation}\label{e:hmfk4}
	\Phi_{f_k^{\prime\prime}}^{a/R}:{\rm HF}^{(-\infty,B)}_{\alpha}(H^{f_k^{\prime\prime}})\to
	{\rm H}_*(\mathcal{L}_\alpha^{a^2/(2R^2)}M).
	\end{equation}
	By composing~(\ref{e:hmfk1})-(\ref{e:hmfk4}) we arrive at the desired isomorphism~(\ref{e:Ipf1}).
	
	\begin{figure}[H]
		\centering
		\subfigure[The monotone homotopy between $h_k$ and $h_k^{\prime}$]{
			\label{fig:6a}
			\includegraphics[scale=0.38]{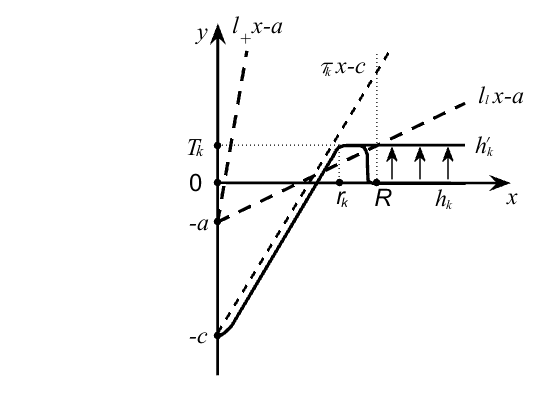}}
		\subfigure[The monotone homotopy between $h_k^\prime$ and $h_k^{\prime\prime}$]{
			\label{fig:6b}
			\includegraphics[scale=0.38]{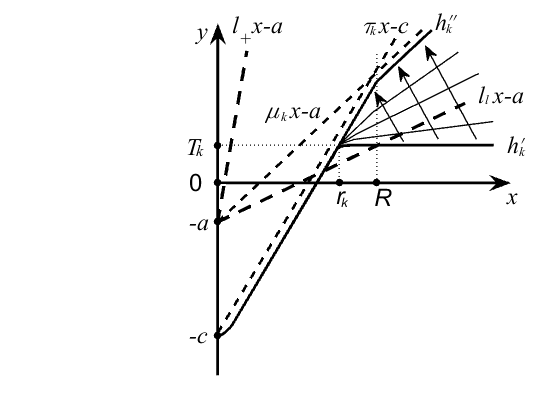}}
		\caption{Monotone homotopies}
		\label{fig:6}
	\end{figure}
	
	To obtain the isomorphism~(\ref{e:Ipf2}), we deform $h_k$ initially by the monotone homotopy as showed in Figure~\ref{fig:6}~(a) to a smooth function $h_k^\prime$, then keep on deforming $h_k^\prime$ by the monotone homotopy (see Figure~\ref{fig:6}~(b)) to a smooth function $h_k^{\prime\prime}$. Here if $x\geq R+\varepsilon$ for some sufficiently small positive constant $\varepsilon$, the graph of $h_k^{\prime\prime}$ turns into a ray with slope $\mu_k$ which is very close to the line $x\to \mu_kx-a$. Note that all points on members of the homotopy whose tangential lines pass through the point $(0,-a)$ lie strictly between the lines $l_-x-a$ and $l_+-a$ ; see Figure~\ref{fig:6}. Therefore we obtain the monotone isomorphisms
	\begin{equation}\label{e:hmhk1}
	{\rm HF}^{(a,+\infty)}_{\alpha}(H^{h_k})\xrightarrow{\Psi_{h_k^{\prime} h_k}} {\rm HF}^{(a,+\infty)}_{\alpha}(H^{h^{\prime}_k})
	\xrightarrow{\Psi_{h_k^{\prime\prime} h_k^{\prime} }}
	{\rm HF}^{(a,+\infty)}_{\alpha}(H^{h^{\prime\prime}_k}).
	\end{equation}
	
	\begin{figure}[H]
		\centering
		\includegraphics[scale=0.5]{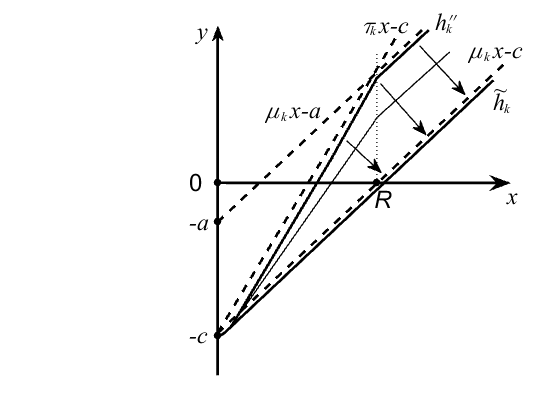}
		\caption{The monotone homotopy between $h_k^{\prime\prime}$ and $\tilde{h}_k$}\label{fig:7}
	\end{figure}
	
	Next, consider function $\tilde{h}_k$ given by following the graph of $h_k^{\prime\prime}$ until it takes on slope $\mu_k$ for the first time (near the point $(0,-c)$); then continue linearly with slope $\mu_k$ (see~Figure~\ref{fig:7}). Smoothing near $(0,-c)$ yields $\tilde{h}_k$. Since the $y$-intercepts of the tangential lines
	of the graphs during the homotopy as showed in~\ref{fig:7} are less than $-a$, we obtain the monotone isomorphism
	\begin{equation}\label{e:hmhk2}
	\Psi_{\tilde{h}_kh_k^{\prime\prime}}:{\rm HF}^{(a,+\infty)}_{\alpha}(H^{h_k^{\prime\prime}})\to
	{\rm HF}^{(a,+\infty)}_{\alpha}(H^{\tilde{h}_k}).
	\end{equation}
	Again, using the fact that the $y$-intercepts of the tangential lines of the graph $\tilde{h}_k$ are less than $-a$,
	the exact sequence~(\ref{e:esFH}) implies the following isomorphisms:
	\begin{equation}\label{e:hmhk3}
	{\rm HF}^{(a,+\infty)}_{\alpha}(H^{\tilde{h}_k})\to {\rm HF}^{(-\infty,+\infty)}_{\alpha}(H^{\tilde{h}_k})
	\to{\rm HF}^{(-\infty,B^\prime)}_{\alpha}(H^{\tilde{h}_k})
	\end{equation}
	where $B^\prime=b_{\tilde{h}_k,\mu_k}$ is as in~(\ref{e:B}). Then Theorem~\ref{thm:crH} gives the isomorphsim
	\begin{equation}\label{e:hmhk4}
	\Phi_{\tilde{h}_k}^{\mu_k}:{\rm HF}^{(-\infty,B^\prime)}_{\alpha}(H^{\tilde{h}_k})\to
	{\rm H}_*(\mathcal{L}_\alpha^{\mu_k^2/2}M).
	\end{equation}
	Composing ~(\ref{e:hmhk1}) - (\ref{e:hmhk4}) yields the desired isomorphism (\ref{e:Ipf2}).
	
	The proof of (\ref{diag:dc3})
	is identical to that of the communicative diagram~(53) in~\cite{We0} without any essential changes; see p.563 and p. 564, which is an easy application of Theorem~\ref{thm:crH}, we omit it here.
\end{proof}



\end{document}